\numberwithin{equation}{section}
\theoremstyle{plain}
\newtheorem{thm}{Theorem}[section]
\def\P{\mathbb{P}}
\def\Pr{\mathrm{\mathbf P}}
\def\Ex{\mathrm{\mathbf E}}
\def\E{\mathbb{E}}
\def\Z{\mathbb{Z}}
\def\R{\mathbb{R}}
\def\N{\mathbb{N}^*}
\def\11{\mathbbm{1}}
\def\pe{\mathsf{p}}
\def\tran{\mathsf{T}}
\def\K{\mathcal{K}}
\def\D{\mathcal{D}}
\def\calU{\mathcal{U}}
\def\calC{\mathcal{C}}
\def\calV{\mathcal{V}}
\def\calM{\mathcal{M}}
\def\calW{\mathcal{W}}
\def\smt{{\mathsf{v}_*}}
\def\ft{{T}}
\def\ob{\mathcal{O}}
\def\locV{\mathsf{V}}
\def\locD{\D_*}
\def\loc1D{\D_*}
\def\tD{ \D}
\def\alo{\alpha_1}
\def\alt{\alpha_1}
\def\alh{\alpha_2}
\def\swp{\gamma}
\newcommand\dif{\mathop{}\!\mathrm{d}}
\newtheorem{proposition}[thm]{Proposition}
\newtheorem{lemma}[thm]{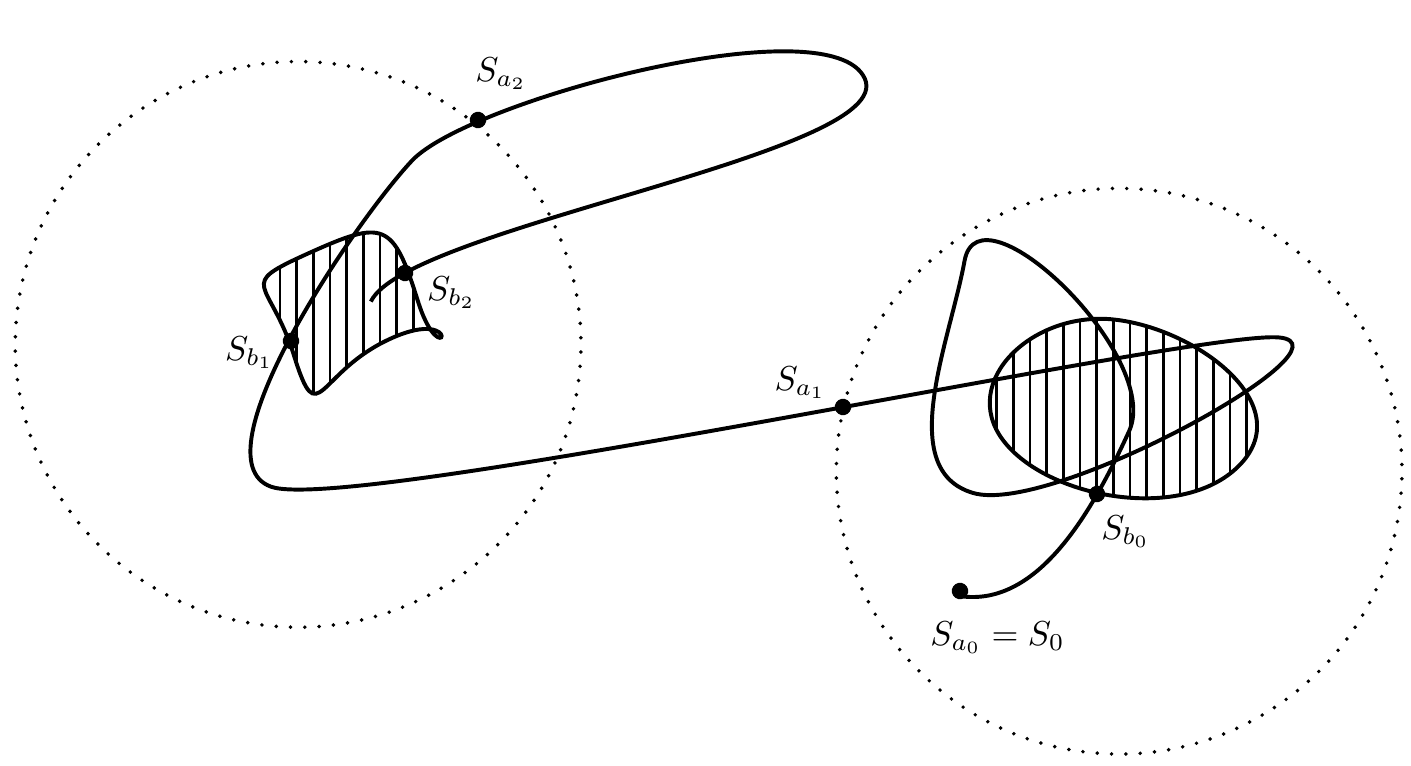}
\newtheorem{cor}[thm]{Corollary}
\newtheorem{defn}[thm]{Definition}
\theoremstyle{definition}
\newtheorem{remark}[thm]{Remark}
\numberwithin{equation}{section}
\begin{document}
\definecolor{qing}{RGB}{0, 153, 153}

\begin{frontmatter}
\title{Poly-logarithmic localization for random walks among random obstacles}
\runtitle{Random walks with random obstacles}

\begin{aug}
\author{\fnms{Jian} \snm{Ding}\thanksref{t1}\ead[label=e1]{dingjian@wharton.upenn.edu}} \and
\author{\fnms{Changji} \snm{Xu}\thanksref{t1}\ead[label=e2]{changjixu@galton.uchicago.edu}}

\thankstext{t1}{Partially supported by NSF grant DMS-1455049, DMS-1757479 and an Alfred Sloan fellowship.}
\runauthor{J. Ding and C. Xu}

\affiliation{University of Pennsylvania and University of Chicago}

\address{Department of Statistics,\\
 The Wharton School, \\
The University of Pennsylvania,\\
 Philadelphia, PA 19104\\
\printead{e1} }

\address{Department of Statistics, \\
The University of Chicago,\\
 Chicago, IL 60637\\
\printead{e2}}
\end{aug}

\begin{abstract}
Place an obstacle with probability $1-\pe$ independently at each vertex of $\mathbb Z^d$, and run a simple random walk until hitting one of the obstacles. For $d\geq 2$ and $\pe$ strictly above the critical threshold for site percolation, we condition on the environment where the origin is contained in an infinite connected component free of obstacles, and we show that the following \emph{path localization} holds for environments with probability tending to 1 as $n\to \infty$: conditioned on survival up to time $n$ we have that
ever since $o(n)$ steps the simple random walk is localized in a region of volume poly-logarithmic in $n$ with probability tending to 1. The previous best result of this type went back to Sznitman (1996) on Brownian motion among Poisson obstacles, where a localization (only for the end point) in a region of volume $t^{o(1)}$ was derived conditioned on the survival of Brownian motion up to time $t$.
\end{abstract}

\begin{keyword}[class=MSC]
\kwd{60K37, 60H25, 60G70.}
\end{keyword}

\begin{keyword}
\kwd{random walk among random obstacles}
\kwd{localization.}
\end{keyword}

\end{frontmatter}

\section{Introduction}

For $d\geq 2$, we consider a random environment where  each vertex of $\mathbb Z^d$ is occupied by an obstacle independently with probability  $1- \pe \in (0,1)$. Given this random environment, we then consider a discrete time simple random walk $(S_t)_{t \in \mathbb{N}}$ started at the origin and killed at the first time $\tau$ when it hits an obstacle. In this paper, we study the quenched behavior of the random walk conditioned on survival for a large time, and we prove the following localization result. For convenience of notation, throughout the paper we use $\P$ (and $\E$) for the probability measure with respect to the random environment, and use $\Pr$ (and $\Ex$) for the probability measure with respect to the random walk.

\begin{thm}\label{thm-main}
  For any fixed $d\geq 2$ and $\pe > p_c(\mathbb Z^d)$ (the critical threshold for site percolation), we condition on the event that the origin is in an infinite cluster (i.e., infinite connected component) free of obstacles. Then 
  there exists a constant $c = c(d,\pe)$ and a collection of $\mathbb P$-measurable subsets $D_n \subset \Z^d $ of cardinality at most $(\log n)^{c}$ and of distance at least $n (\log n)^{-100d^2}$ from the origin, such that  the following holds.

  There exists a random time $T \in [0,cn(\log n)^{-2/d}]$ such that
   \begin{equation}
\Pr\Big(T \leq c|S_T|, S_{[T,n]} \subset D_n\mid \tau > n\Big) \to 1 \mbox{ in } \P\mbox{-probability}\,.\\
   \end{equation} 
   Here $S_{[T,n]}$ is used to denote for $\{S_t:  t\in [T,n]\}$.
\end{thm}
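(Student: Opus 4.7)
\medskip
\noindent\textbf{Proof plan.} The heuristic behind the theorem is that survival for time $n$ is achieved by diffusing inside a near-obstacle-free ball of radius $r^* \asymp (\log n)^{1/d}$. Indeed, the principal Dirichlet eigenvalue of a cleared ball of radius $r$ scales like $r^{-2}$, and the expected number of cleared balls of radius $r$ inside a box of side $R$ grows like $R^d \pe^{c_d r^d}$, which is $\Theta(1)$ exactly when $r^d \asymp \log R /\log(1/\pe)$. Balancing the survival cost $\exp(-cn/r^2)$ against the entropic cost $\exp(-cR)$ of ballistic travel to distance $R$ yields an optimal resting distance $R^* \asymp n/(\log n)^{1+2/d}$ from the origin, which accounts quantitatively for the scales $cn(\log n)^{-2/d}$ (time to reach the clearing) and $n(\log n)^{-100d^2}$ (lower bound on the distance) appearing in the statement.

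\medskip
\noindent\textbf{Key steps.} The plan is to carry out the following four steps. (1) Establish the sharp quenched survival asymptotic: for $\P$-a.e.\ environment in the infinite cluster, $\log\Pr(\tau > n) = -(\lambda^* + o(1))\,n(\log n)^{-2/d}$ for an explicit constant $\lambda^* = \lambda^*(d,\pe)$. The upper bound comes from the Feynman--Kac representation combined with a union bound over candidate resting regions; the matching lower bound follows from a first-moment argument that produces a single near-optimal clearing at distance $\sim R^*$. (2) Define $D_n$ as the union, over clearings at distance in $[n(\log n)^{-100d^2}, cn(\log n)^{-2/d}]$, of those whose Dirichlet eigenvalue matches the optimum up to a factor $1 + o(1)$; an entropy estimate bounds the number of such clearings, and hence $|D_n|$, by $(\log n)^c$. (3) Decompose time $[0,n]$ into epochs on the relaxation scale $(\log n)^{2/d}$ of a typical clearing, and show, via eigenvalue concentration and a union bound over epochs, that any path spending more than $o(n(\log n)^{-2/d})$ time outside $D_n$ is penalized by an extra factor $\exp(-\omega(n(\log n)^{-2/d}))$, contradicting the sharp asymptotic of step~(1). (4) Define $T$ to be the first entry time of $S$ into the component of $D_n$ in which the walk eventually resides; the bounds $T \le cn(\log n)^{-2/d}$ and $T \le c|S_T|$ then follow by combining step~(3) with a standard ballistic-speed lower bound inside the survival event (any longer or slower excursion would again be killed by the survival cost).

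\medskip
\noindent\textbf{Main obstacle.} The delicate point is passing from the annealed heuristic to a quenched conclusion with \emph{poly-logarithmic} (rather than $n^{o(1)}$) localization. One must rule out paths that shuttle between several near-optimal clearings or that rest in slightly sub-optimal ones close to the origin; this requires controlling the Dirichlet eigenvalues of \emph{all} candidate clearings simultaneously up to precision $o((\log n)^{-2/d})$, as well as their spatial distribution. I would handle this by a multi-scale coarse-graining: at each scale between $(\log n)^{1/d}$ and $n$, use independence of distant obstacles to obtain concentration of the eigenvalue profile of obstacle-free islands, and combine scales via a Borel--Cantelli-type union bound. Independence of obstacles at widely separated locations is what ultimately enables quenched control, but it must be deployed with uniform quantitative precision across the full range of relevant scales; I expect this to be the technical heart of the proof.
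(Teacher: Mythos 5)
There is a genuine gap at the heart of your plan, namely step (2) together with the precision you allow yourself in steps (1) and (3). You define $D_n$ as the union of clearings whose Dirichlet eigenvalue matches the optimum up to a factor $1+o(1)$ and assert that ``an entropy estimate'' bounds their number by $(\log n)^c$. Nothing in your outline supplies such an estimate, and with only $1+o(1)$ (equivalently $o((\log n)^{-2/d})$) control of eigenvalues it cannot be true in general: the number of islands whose eigenvalue lies within a $1+o(1)$ window of the maximum is exactly the quantity that the classical approach (Sznitman's, based on the quenched asymptotics $\log\Pr(\tau>n)=-(c_*+o(1))n(\log n)^{-2/d}$ and variational comparisons) fails to control below $n^{o(1)}$ --- which is why that route only ever gave localization in regions of volume $t^{o(1)}$. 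Your proposed remedy, ``concentration of the eigenvalue profile'' via multi-scale coarse graining, points in the wrong direction: what is needed is an \emph{anti-concentration} (heavy right tail) statement for the principal eigenvalues / short-time survival probabilities of small regions, so that the order statistics near the edge have non-small spacings and only poly-logarithmically many regions can compete with the best one. This is precisely what the paper proves (Proposition~\ref{tail}): a recursive tail bound of the form $\P(X_v\geq\beta)\leq c_1k_n^d\,\P(X_v\geq c_2\beta\log n)+n^{-(2d+1)}$, obtained by an environment-surgery argument (conditioned on survival the walk accumulates large local time at some $c$-good vertex; deleting the nearest obstacle there multiplies the survival probability by $\asymp\log n$, and the surgery has preimage multiplicity $O(k_n^d)$). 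Notably the paper never uses, and explicitly avoids, the sharp asymptotic you take as step (1); the $o(1)$ in its exponent is too coarse to separate the candidate islands.

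Two further points are weaker than you suggest. First, your heuristic ``optimal resting distance $R^*\asymp n/(\log n)^{1+2/d}$'' is not established by anything in your argument (and determining the order of this distance is listed in the paper as an open problem); the theorem only requires the two-sided range $[n(\log n)^{-100d^2},\,cn(\log n)^{-2/d}]$. Second, the bound $T\leq c|S_T|$ does not follow from a ``standard ballistic-speed lower bound'' for the conditioned walk: the conditioning heavily distorts the path measure, and the paper needs a genuine argument --- exponential decay of survival in the length of the loop erasure, plus control of the total length of erased loops via the sets $\mathcal M(t)$ and a greedy-lattice-animal estimate, together with the eigenvalue-gap statement that away from the target island the walk never meets a region with eigenvalue within a factor $1-k_n^{-20d}$ of $\lambda_{\smt}$. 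Without the tail/spacing input and the loop-erasure analysis, steps (3) and (4) of your plan do not close.
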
   

We say a vertex is open if it is free of obstacle, and thus each vertex is open with probability $\pe$. In this paper,  we have chosen $\pe>p_c(\mathbb Z^d)$ to make sure that with positive probability the open cluster containing the origin is infinite.    A variation of the model is to place obstacles on edges rather than on vertices, and one can easily verify that our result as well as the its proof extend to that case. In addition, in this paper, the obstacles are chosen to be hard, i.e., killing a random walk with probability 1. One could alternatively consider soft obstacles. That is, every time the random walk hits an obstacle it has a certain fixed probability (which is strictly less than 1) to be killed. Further, one could also consider the continuous analogue, i.e., Brownian motion with Poissonian obstacles as in \cite{Sznitman98}. While we believe our methods useful in these settings, we leave these for future study.

\subsection{Background and related results}

Random walks among random obstacles has been studied extensively in literature. In the annealed case, the logarithmic asymptotics for survival probabilities are closely related to the large deviation estimates for the range of the random walk \cite{DR75, DR79, Sznitman90, Sznitman93}. Indeed, it boils down to the following optimization problem:
\begin{equation}\label{eq-annealed}
\max_{k}\P(|S_{[0, n]}| = k) \pe^k\,,
\end{equation} 
where the key to determining the optimizer is an estimate on $\P(|S_{[0, n]}| \leq k)$ when $k$ is substantially smaller than  typical  $|S_{[0, n]}|$.
 The localization problem has also been studied in the annealed case, where in \cite{Bolthausen94,Sznitman91} it was proved that in dimension two the range of the random walk/Wiener sausage is asymptotically a (Euclidean) ball and in \cite{Povel99} it was shown that in dimension three and higher the range of the Brownian motion is contained in a ball; in both cases the asymptotics of the radius for the balls were determined. The behavior that the range of the random walk/Wiener sausage is asymptotically a  ball, is fundamentally determined by the celebrated Faber--Krahn inequality (which states that among sets in $\mathbb R^d$ with given volume balls are the only sets which minimize the first Dirichlet eigenvalue). Indeed, in \cite{Bolthausen94} a key ingredient was a quantitative version of this type of inequality in $\mathbb Z^2$ which was proved in the same paper; in \cite{Sznitman97} another quantitative version of Faber--Krahn was proved independently;  in \cite{Povel99} a quantitative version of isoperimetric inequality (related to Faber--Krahn inequality) from \cite{Hall92} was a key ingredient in the proof. Provided that the range is asymptotically a ball, in order to determine the radius of the ball one just need to solve \eqref{eq-annealed} with $\P(|S_{[0, n]}| = k)$ replaced by $\P(S_{[0, n]}\mbox{ is contained in a ball of volume } k)$ --- this is then a relatively straightforward computation.

The localization problem in the quenched case was far more challenging: in the Brownian setting it was studied first in \cite{Sznitman93b, Sznitman97} and its logarithmic asymptotics for the survival probability was first derived in \cite{Sznitman93b} (see also the celebrated monograph \cite{Sznitman98}). In \cite{Fukushima09} a simple argument for the quenched asymptotics of the survival probability was given using the Lifshitz
tail effect.  In the random walk setting, the logarithmic asymptotics of survival probability was computed in \cite{Antal95} which built upon methods developed in the Brownian setting.  Stating the result of \cite{Antal95} in our setting, we have the following: conditioned on the origin being in the infinite open cluster, one has that with $\P$-probability tending to 1 as $n\to \infty$
\begin{equation}\label{eq:000}
\Pr(\tau > n) = \exp\{-c_*n(\log n)^{-2/d}(1 + o(1))\}\,.
\end{equation}
Here $c_* = \mu_{B} (\frac{\omega_d \log p}{d})^{2/d}$, $B$ is a unit ball in $\R^d$, $\omega_d$ is the volume of  $B$ and $\mu_{B}$ is the first eigenvalue of the Dirichlet-Laplacian of $B$ which is formally defined as 
$$\mu_B = \frac{1}{2d}\min_{u\in W_0^{1, 2}(B)} \{\int_B |\nabla u|^2 \dif x: \|u\|_{L^2(B) =1}\}$$
($W_0^{1, 2}(B)$ is the closure of $C_0^\infty(B)$ in the norm $\|u\|_{W_0^{1, 2}(B)} = (\int_B |\nabla u|^2 dx)^{1/2}$).
We remark that Faber--Krahn inequality also plays a fundamental role in \eqref{eq:000} --- one strategy to obtain the lower bound in \eqref{eq:000} is for the random walk to travel (in minimal possible number of steps) to the largest open ball within chemical distance (i.e., graph distance in open clusters) $n^{1-o(1)}$ from the origin  and stays within that ball afterwards. 

 In terms of localization, an analogous result to Theorem~\ref{thm-main} was obtained in \cite{Sznitman96} (see also \cite{Sznitman98}) with an upper bound of $t^{o(1)}$ on the volume of localizing region. Such region is sometimes called (an union of) \emph{islands}, where an island is a connected subset in $\mathbb Z^d$.
Theorem~\ref{thm-main} substantially improves the previous best result of this type \cite{Sznitman96, Sznitman98} in  the following two aspects. 
\begin{itemize}
 \item In \cite{Sznitman96, Sznitman98} the author studied Brownian motion $(B_t)$ among Poissonian obstacles and showed that $B_t$ is localized in $t^{o(1)}$ many islands each of which has volume $t^{o(1)}$, conditioned on survival up to time $t$. In comparison, Theorem~\ref{thm-main} yields a localization in poly-logarithmic in $n$ many islands each of which has volume poly-logarithmic in $n$, conditioned on the random walk surviving up to time $n$. 
\item Path localization was proved in \cite{Sznitman96, Sznitman98} in one-dimensional case, that is to say, the trapping time (i.e., the time it takes to reach an island in which the Brownian motion/random walk stays  afterwards) is sub-linear in $t$. But in dimension two or higher no path localization was derived. We show that for $d\geq 2$ the trapping time is linear in the Euclidean distance between the island (where the localization occurs) and the origin --- this is a strong path localization result which in particular implies that the trapping time is sublinear in $n$. 
 \end{itemize}
 In addition, we believe that the strategy we described below \eqref{eq:000} to achieve the (asymptotic) lower bound in \eqref{eq:000} is \emph{essentially} the optimal strategy for the random walk. Since the largest open ball under consideration has order $\log n$, we thus believe that the size of $D_n$ should be of order $\log n$. With this belief, our upper bound is expected to be sharp in the nature of poly-logarithmic but not sharp in terms of the power. 

Thus far, we have been discussing random walks with Bernoulli obstacles, i.e., at each vertex we kill the random walk with probability either 0 or a certain fixed number. More generally, one may place i.i.d.\ random potentials $\{W_v: v\in \mathbb Z^d\}$ (where $W_v$ follows a general distribution) and one assigns a random walk path probability proportional to $\exp(\sum_{i=0}^n W_{S_i})$. The case of Bernoulli obstacles is a prominent example in this family. Previously, there has been a huge amount of work devoted to the study of various localization phenomenon when the potential distribution exhibits some tail behavior ranging from heavy tail to doubly-exponential tail. 
See \cite{Wolfgang16} for an almost up-to-date review on this subject, also known as the parabolic Anderson model and the random Schr\"odinger operators. See also \cite{ADS17} for a review on random walk among mobile/immobile random traps. 

For a very partial review, in the works of \cite{GMS83, AS07, AS08, HMS08, KLMS09, LM12, ST14}, much progress has been made for heavy-tailed potentials. In particular they proved localization in a single lattice point \cite{KLMS09,FM14,LM12,ST14} for potentials with tails heavier than doubly-exponential. We note that by localization in a single lattice point we meant for a single large $t$, as considered in the present article; one could alternatively consider the behavior for all large $t$ simultaneously as in \cite{KLMS09}, in which case they showed that the random walk is localized in two lattice points, almost surely as $t\to \infty$. In a few recent papers \cite{BK16, BKS16} (which improved upon \cite{GKM07}), the case of doubly-exponential potential was tackled where detailed behavior on leading eigenvalues and eigenfunctions, mass concentration as well as aging were established. In particular, it was proved that in the doubly-exponential case the mass was localized in a bounded neighborhood of a site that achieves an optimal compromise between the local Dirichlet eigenvalue of the Anderson Hamiltonian and the distance to the origin.
\subsection{Future directions} Provided with the present article, there are a number of natural future directions (e.g., proving an analogue of our result in the case for Brownian motion and/or for soft obstacles, as mentioned right after Theorem~\ref{thm-main}). Here we list a few problems of substantial interests.
\begin{itemize}
\item Show that there exists a (conjecturally) unique island for localization.
\item Determine the asymptotic volume and shape for this island.
\item Show that $\max_{x \in \Z^d} \Pr(S_n = x \mid \tau > n)= O(1/\log n)$.
\item Determine the order of the distance between this island and the origin (we note that Theorem~\ref{thm-main} gives that such distance is between $\Omega(n (\log n)^{-100d^2})$ and $O(n (\log n)^{-2/d})$). 
\item Describe the geometry of the range for the random walk.
\end{itemize}
Based on our current understanding, we believe that completely solving the aforementioned questions will require a number of new ideas and we expect both results and techniques of the present paper to play an important role.

\subsection{A word on proof strategy}\label{sec:proof-strategy}
We will consider small regions whose volume is poly-logarithmic in $n$, and consider their principal eigenvalues (formally, the principal eigenvalue for a region is the largest eigenvalue for the transition kernel of the random walk killed upon hitting an obstacle or exiting the region).
The starting point of our proof is the crucial intuition that localization more or less amounts to the phenomenon that the order statistics for principal eigenvalues in small regions which are within distance $n$ from the origin have non-small spacings near the edge (i.e., near the extremum). Non-small spacings for principal eigenvalues near the edge plays an important role in controlling the number of (which turns out to be at most poly-logarithmic in $n$) small regions where the random walk will be localized in: Since the spacings are non-small near the edge, this \emph{roughly speaking} implies that any small region which is not one of the best poly-logarithmic in $n$ regions, is strictly suboptimal compared to the best small region. That is to say, the random walk would prefer to stay in the best small region instead of the union of all the suboptimal regions. In other words, the best poly-logarithmic in $n$ regions are the only possible regions for which the random walk would spend a substantial amount of time. This implies the poly-logarithmic localization as desired. Next, we describe our proof strategy in more detail.

Since principal eigenvalues in small regions are more or less i.i.d., such spacings near the edge are determined by the tail behavior of principal eigenvalues: the heavier the tail is, the larger the spacing is near the edge. To implement this intuition, we consider the survival probability after a poly-logarithmic number of steps starting from each vertex in the box of size $n$ --- such survival probabilities are closely related to principal eigenvalues in a region of poly-logarithmic diameter (see Lemma~\ref{eigv}). Here we have to choose the number of steps $k_n$ at least logarithmic in $n$, otherwise we will have too many starting points with survival probability 1. What is important to us, is the fact that by choosing $k_n$ poly-logarithmic in $n$, we already get a tail on such survival probabilities which is heavy enough for our purpose. 

In light of the above discussions, a key task is to prove that the survival probability, viewed as a random variable measurable with respect to the random environment, has non-light tails. This is incorporated in Section~\ref{sec:tail}. Note that there are many balls of radius $10^{-3}(\log_{1/\pe} n)^{1/d}$ which are free of obstacles and thus have atypically high survival probabilities for random walks started inside. Thus, in light of our interest in spacings only near the edge of the order statistics, it suffices to control the right tail of the survival probability that is far away from its typical value. For vertices started from which the survival probabilities in $k_n$ steps are high, we can then {\it a priori} prove that the random walk spends at least a positive fraction of steps in a set of cardinality $O(\log n)$ near this vertex (see Proposition~\ref{cgoodtime}). This implies that there exists at least one vertex with large local times conditioned on survival in $k_n$ steps. Therefore, by removing the closest obstacle near this vertex we will be able to add a significant fraction of paths and thus significantly improve the survival probability. Finally, by controlling the cardinality of the preimage of this operation of removing an obstacle, we obtain the desired tail behavior on survival probability, as shown in the proof of Proposition~\ref{tail}.

With Proposition~\ref{tail} at hand, we can then show in Lemma~\ref{goodsiteprob} that there are poly-logarithmic many local regions that are candidates for localization, and any other regions have significantly lower survival probabilities compared to the best candidate regions. Combined with well-known tools from percolation theory, a positive fraction of the candidate regions are connected to the origin by open paths of lengths which are linear in their Euclidean distances from the origin. This is the content of Section~\ref{sec:candidate-regions}.

Using ingredients from Section~\ref{sec:candidate-regions}, we prove in Lemma~\ref{firststep} that conditioned on survival the random walk with probability close to 1 visits one of the candidate regions, for the reason that moving to the best reachable candidate region quickly and staying there afterwards yields a much larger survival probability than never visiting any of the candidate regions. Next, we prove in Proposition~\ref{endpointloc} that once the random walk reaches a candidate region it is not efficient to move far away without entering another candidate region. Up to this point, we have derived the poly-logarithmic localization as desired.

Finally, it remains to show that the amount of time for the random walk to reach the region in which it is localized afterwards is at most linear in the Euclidean distance of this region from the origin. To this end, we employ the notion of loop erasure for the random walk, and show that the size of the loop erasure is at most linear and that the total size of (erased) loops is also at most linear. This is the content of Section~\ref{pathlocalization}.

\subsection{Comparison to earlier works}

In previous works on localization for parabolic Anderson models, one way to derive localization was from rather precise information on leading eigenvalues, see, e.g., \cite{BKS16}. However, it is usually non-trivial to compute the asymptotics for leading principal eigenvalues. It occurs to us that with this approach in order to derive a poly-logarithmic localization, one would have to compute the principal eigenvalues in substantially higher precision than that in \cite{Sznitman98, Fukushima09}, which seems to be challenging.

Another way is to derive localization from (lower bound on) fluctuations of principal eigenvalues, and such method was used in \cite{Sznitman97,Sznitman98} to control the distance of the localization region from the origin. In particular, in \cite{Sznitman97} the author studied the following variational problem (for large $t$)
\begin{equation}\label{eq-variational-problem}
F_t (\ell)= \ell + t \lambda_\ell
\end{equation}
where $\lambda_\ell$ is the principal Dirichlet eigenvalue of $\tfrac{1}{2} \Delta$ for $(-\ell, \ell)^d$. In \cite[Theorem 3.2]{Sznitman97} a lower bound on the fluctuation of principal eigenvalues \emph{averaging over many scales} was derived, from which information on minimizers of $\ell$ for \eqref{eq-variational-problem} was derived in \cite[Theorem 5.1]{Sznitman97}.  In addition, a similar variational problem has been studied in \cite{Sznitman98} (see \cite[Page 281, Equation (3.1)]{Sznitman98}), via which a localization phenomenon was derived.  However, along the arguments of \cite{Sznitman97, Sznitman98}, the total volume of the localization region one could derive was of order $t^{o(1)}$ as implied in \cite{Sznitman98}. One of a few reasons is that the number of minimizers for \eqref{eq-variational-problem} could potentially be large.

Our method shares the same underlying philosophy which emphasizes the crucial role of fluctuations of principal eigenvalues, as  discussed in Section~\ref{sec:proof-strategy}. However, our proof is mostly self-contained and in particular does not borrow from \cite{Sznitman98, Antal95, Sznitman97}.  Somewhat surprisingly, our proof does not rely on the estimate  \eqref{eq:000} either. In comparison with \cite{Sznitman97, Sznitman98} the following features of our method are crucial for deriving a poly-logarithmic localization.
\begin{itemize}
\item We directly work with principal eigenvalues of small regions. In comparison, working with principal eigenvalues in the big box $(-\ell, \ell)^d$ would need an additional step to show the localization of the principal eigenfunctions.
\item We derive a tail estimate on principal eigenvalues of small regions, which allows us to control spacings near the edge of their order statistics in \emph{every} scale (instead of averaging over many scales as in \cite{Sznitman97}).
\item We directly compare the probabilities for different set of paths instead of working with variational problems such as \eqref{eq-variational-problem}, which avoids introducing extra error factors in the analysis.
\end{itemize}

\subsection{Notation convention}

For notation convenience, we denote by $\ob$ the collection of all obstacles (sometimes referred to as closed vertices) and $\mathcal C(v)$ the open cluster containing $v$. 

For $A\subset \mathbb Z^d$,  write $\partial A = \{x\in A^c: y\sim x \mbox{ for some } y\in A\}$, where $x\sim y$ means that $x$ is a neighbor of $y$ and $\partial_{i} A = \{x\in A: y\sim x \mbox{ for some } y\in A^c\}$. We denote by $\xi_A = \inf\{t \geq 0: S_t \not\in A\}$ the first time for the random walk to exit from $A$, and by $\tau_A = \inf\{t \geq 0: S_t\in A\}$ the hitting time to $A$. As having appeared earlier, we write $\tau = \tau_{\ob}$ for the survival time of the random walk. 

For $m \in \N = \{1, 2, 3, \ldots\}$, we denote by  $S_{[0, m]} = \{S_0, \ldots, S_m\}$  the range of the first $m$ steps of the random walk.  A path is a sequence of vertices $\omega = [\omega_0,\omega_1,...,\omega_{|\omega|}]$ where 
$|\omega|$ is its length and $\omega_i$, $\omega_{i+1}$ are adjacent for $0\leq i \leq |\omega|-1$. We say a path is open if all of its vertices are open. For $u,v \in \Z^d$, we say $u\leftrightarrow v$ if there exists an open path that connects $u$ and $v$. We define the chemical distance by 
\begin{equation}\label{eq-def-chemical}
D(u,v) = \inf\{|\omega|: \omega_0 = u, \omega_{|\omega|}=v, \omega \mbox{ is open}\}\,.
\end{equation}
We denote the $\ell^2$-distance by $|u-v| = \big(\sum_{i=1}^d (u_i - v_i)^2 \big)^{1/2}$.   We denote discrete $\ell^2$-ball by $B_r(v) = \{x \in \Z^d: |x-v| \leq r \}$.

We write $A_n \lesssim B_n$ if there exits a constant $C>0$ depending only on $(d,\pe)$ such that $A_n \leq C B_n$ for all $n$, and $A_n \gtrsim B_n$ if $B_n \lesssim A_n$. If $A_n \gtrsim B_n$ and $A_n \lesssim B_n$, we write $A_n \asymp B_n $. 

Here is a list of the rest of the symbols used in this paper, followed by the place of their definition.
\begin{center}
\begin{tabular}{ llllll } 
 \hline
 $X_v$& \eqref{eq:def-xv}&
 $k_n$& \eqref{eq-def-k-n}&
 $c$-good & \textsc{Def.} \ref{def-c-good}\\ 
 $\epsilon$-fair & \textsc{Def.}\ref{def-epsilon-fair} & 
 $\mathcal K(\cdot,\cdot)$&\eqref{eq-def-K}&
 $p_{\alpha}$'s& \eqref{eq:palpha-def}\\
 $\calU_\alpha$'s& \eqref{eq:U-def}&$\mathcal C_R(v)$& \textsc{Def.} \ref{def:CRlambda}& $\lambda_v,R$& \textsc{Def.} \ref{def:CRlambda}\\
 $\locD$, $\tD$'s & \eqref{eq-def-D}& $\locV$ & \eqref{eq:def-locv}& $D_n$& \eqref{eq:def-dn}\\
$\smt$& \textsc{Def.} \ref{def:vt}& $T(\cdot)$& \eqref{eq-def-T}&
$\eta$& \eqref{eq:defLE}  \\
$\calM(t)$& \textsc{Def.} \ref{def-Mt}& $A_t(\omega)$& \textsc{Def.} \ref{def-Mt}& &\\
 \hline
\end{tabular}
\end{center}

\medskip

\noindent {\bf Acknowledgment.} We warmly thank Ryoki Fukushima, Steve Lalley and Rongfeng Sun for valuable discussions. We warmly thank Rongfeng and Ryoki for a careful reading on an earlier manuscript with numerous valuable comments. We also thank Alain-Sol Sznitman for pointing out \cite{Sznitman97} which we missed in an earlier version. We warmly thank two anonymous referees for numerous helpful comments on the exposition of our manuscript.  Finally, much of the work was carried out when J.D. was at University of Chicago.

\section{Tail behavior of survival probabilities} \label{sec:tail}

The main goal of this section is to prove right tail bounds on the survival probability, as incorporated in Proposition~\ref{tail} below (see also the discussions below Proposition~\ref{tail} for its proof strategy). To this end, for each vertex $v \in  \mathbb Z^d$, we let
\begin{equation}
\label{eq:def-xv}
	X_v = \Pr^v(\tau > k_n)
\end{equation}
be the probability that the random walk started at $v$ survives for at least $k_n$ steps, where $k_n$ is set as (we denote by $\lfloor x \rfloor$ the greatest integer less than or equal to $x$ for $x\in \mathbb R$)
\begin{equation} \label{eq-def-k-n}
k_n = \begin{cases}
    \lfloor (\log n)^3 (\log \log n)^2 \rfloor & \mbox{if } d = 2;\\
    \lfloor(\log n)^{4 - 2/d }\rfloor & \mbox{if } d \geq 3.
  \end{cases} 
  \end{equation}
  We remark that there is no fundamental reason for our choice of $k_n$: it has to be poly-logarithmic in $n$ so that it is ``small'', and it has to be at least substantially larger than $(\log n)^{2/d}$ so that  $\max_{v \in B_n(0)}X_v = o(1)$. We made our particular choice of $k_n$ for convenience of analysis.
Note that $X_v$ is a $\P$-measurable random variable. As mentioned in the introduction, it suffices to consider the right tail of $X_v$ far away from its typical value. For reasons that will become clear soon, it is convenient to set the threshold as   
$$ \beta_\chi = \chi^{k_n/(\log n)^{2/d}},$$
where $\chi$ is a positive constant to be selected.
\begin{lemma}\label{lem-lambdac}
There exists $\chi = \chi(d,\pe) > 0$ such that 
\begin{equation}
  \label{eq:lambdac}
  \P(X_v \geq \beta_\chi) \gtrsim n^{-d + 1}\,.
\end{equation}
\end{lemma}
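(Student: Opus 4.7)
The plan is to lower-bound $X_v$ by insisting that a Euclidean ball around $v$ is obstacle-free, and then lower-bounding the probability that the killed random walk stays confined in this ball for $k_n$ steps. More precisely, let $r=r_n = \lfloor c_0 (\log n)^{1/d}\rfloor$, where $c_0=c_0(d,\pe)>0$ is chosen small enough that the lattice cardinality satisfies
\[
|B_r(v)| \;\leq\; \frac{d-1}{\log(1/\pe)}\,\log n \qquad\text{for all large } n.
\]
With this choice, $\P(\ob\cap B_r(v)=\emptyset) = \pe^{|B_r(v)|} \geq n^{-(d-1)}$, which already matches the target lower bound up to the constant. It remains to show that on this environment the quantity $X_v = \Pr^v(\tau>k_n)$ is at least $\beta_\chi$, for a suitable $\chi=\chi(d,\pe)$.

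To do this, I would condition on $\ob\cap B_r(v)=\emptyset$ and bound $X_v \geq \Pr^v(S_{[0,k_n]}\subset B_r(v))$. Letting $P$ denote the transition kernel of simple random walk on $\mathbb Z^d$ killed upon exiting $B_r(v)$, the spectral theorem for $P^2$ (whose eigenvalues are nonnegative) yields, for even $k_n$,
\[
\Pr^v\bigl(S_{[0,k_n]}\subset B_r(v)\bigr) \;\geq\; P^{k_n}(v,v) \;\geq\; \lambda_1^{k_n}\,\phi_1(v)^2,
\]
where $\lambda_1\in(0,1)$ is the principal eigenvalue of $P$ on $B_r(v)$ and $\phi_1>0$ the corresponding $\ell^2$-normalized eigenfunction (positivity by Perron--Frobenius). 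Since $B_r(v)$ is a discrete ball centered at $v$, the cube/ball symmetry together with $\sum_u \phi_1(u)^2=1$ force $\phi_1(v)^2 \geq 1/|B_r(v)| \gtrsim 1/\log n$. For the eigenvalue, the classical discrete-to-continuum comparison for the Dirichlet Laplacian on a Euclidean ball gives
\[
1-\lambda_1 \;\leq\; \frac{\mu_B}{r^2}(1+o(1)) \;\leq\; \frac{\mu_B/c_0^{\,2}+o(1)}{(\log n)^{2/d}},
\]
so $\lambda_1^{k_n} \geq \exp\!\bigl(-(1+o(1))\,(\mu_B/c_0^{\,2})\,k_n/(\log n)^{2/d}\bigr)$.

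Combining the two estimates,
\[
X_v \;\geq\; \frac{1}{C\log n}\,\exp\!\Bigl(-(1+o(1))\,\frac{\mu_B}{c_0^{\,2}}\cdot\frac{k_n}{(\log n)^{2/d}}\Bigr),
\]
and because $k_n/(\log n)^{2/d}\to\infty$, the prefactor $1/(C\log n)$ is swallowed into the $(1+o(1))$ in the exponent. Fixing any $\chi<\exp(-\mu_B/c_0^{\,2})$ then yields $X_v\geq \beta_\chi = \chi^{k_n/(\log n)^{2/d}}$ deterministically on the event $\{\ob\cap B_r(v)=\emptyset\}$, completing the proof since this event has $\P$-probability at least $n^{-(d-1)}$. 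The only technical step requiring some care is the discrete--to--continuum comparison of $1-\lambda_1$ with $\mu_B/r^2$, which is quantitative but standard (it can be obtained, e.g., by inserting a discretized version of the continuous first eigenfunction into the Rayleigh quotient), and the parity issue for $k_n$, which is easily patched by comparing $k_n$ with $2\lfloor k_n/2\rfloor$ at the cost of a trivial factor. No deep new idea is needed beyond choosing the right radius $r_n$.
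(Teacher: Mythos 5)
Your overall strategy is the same as the paper's: reserve an obstacle-free ball of radius $\asymp(\log n)^{1/d}$ around $v$ (probability $\gtrsim n^{-d+1}$, which is the source of the bound), and then show that confinement in this ball for $k_n$ steps costs at least $\chi^{k_n/(\log n)^{2/d}}$. The paper proves the confinement estimate by an elementary blocking argument (in each block of $r^2$ steps the walk stays in $B_{2r}$ and ends in $B_r$ at constant cost), whereas you prove it spectrally via $P^{k_n}(v,v)\geq\lambda_1^{k_n}\phi_1(v)^2$ together with a discrete-to-continuum eigenvalue comparison; both give the same exponent, and for your purposes any crude test-function bound $1-\lambda_1\leq C(d)/r^2$ suffices, since $\chi$ is allowed to depend on $(d,\pe)$ and the sharp constant $\mu_B$ plays no role.

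One step is not justified as written: the claim that ``cube/ball symmetry together with $\sum_u\phi_1(u)^2=1$ force $\phi_1(v)^2\geq 1/|B_r(v)|$''. Symmetry only makes $\phi_1$ constant on orbits of the stabilizer of $v$; it does not by itself place the maximum of $\phi_1$ at the center, so this inequality requires an argument. A crude one suffices: if $u^*$ maximizes $\phi_1$, then $\phi_1(u^*)\geq|B_r(v)|^{-1/2}$, and iterating the eigenvalue relation $\lambda_1\phi_1(u)\geq(2d)^{-1}\phi_1(w)$ (for $w\sim u$ inside the ball) along a path from $v$ to $u^*$ gives $\phi_1(v)\geq(2d)^{-Cr}\,|B_r(v)|^{-1/2}$; since $r\asymp(\log n)^{1/d}$ while $k_n/(\log n)^{2/d}\geq(\log n)^{2}$, this extra factor is absorbed into the choice of $\chi$ exactly as your $1/(C\log n)$ prefactor was. (Alternatively, avoid both the diagonal bound and the parity issue by writing $\Pr^v(S_{[0,k_n]}\subset B_r(v))=(P^{k_n}\mathbf{1})(v)\geq(P^{k_n}\phi_1)(v)=\lambda_1^{k_n}\phi_1(v)$, using $\phi_1\leq\mathbf{1}$ pointwise; and note your parity patch should compare $k_n$ with $2\lceil k_n/2\rceil$ rather than $2\lfloor k_n/2\rfloor$, since confinement for more steps implies confinement for fewer.) With this repair the argument is correct and yields the lemma.
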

\begin{proof}
Since $|B_r(v)| \asymp r^d$, there exists $c_{d,\pe}$ depending only on $(d, \pe)$ such that
\begin{equation}\label{eq-open-ball}
\P(B_{c_{d,\pe} (\log n)^{1/d}}(v) \subset \ob^c) \gtrsim n^{-d + 1}.
\end{equation}
When all vertices in $B_{c_{d,\pe} (\log n)^{1/d}}(v)$ are open, the random walk with initial point $v$ will survive in $k_n$ steps if it stays in $B_{c_{d,\pe} (\log n)^{1/d}}(v)$. Next, we estimate the probability for the random walk to stay in a ball. This is a fairly simple and standard argument, which we give only for completeness. It is clear that there exists $c= c(d)>0$ such that 
$$\min_{x\in B_r} \Pr(S_t \in B_{2r} \mbox{ for } 0\leq t<r^2, S_{r^2} \in B_r) \geq c$$
for all $r\geq 1$. Now, set $r =  \lfloor 2^{-1}c_{d,\pe} (\log n)^{1/d} \rfloor$. By having the random walk to stay within $B_{2r}$ and to end in $B_r$ for every block of $r^2$ steps, we obtain 
\begin{equation} \label{eq-rw-ball}
\Pr^v(S_t \in B_{2r}(v), t= 0,1,...,k_n)
      \geq c^{ k_n/(r^2) +1}\,.
      \end{equation}
Now, we can choose $\chi = \chi(d, \pe)>0$ small enough so that $c^{ k_n/(r^2) +1} \geq \beta_\chi$. Combining \eqref{eq-open-ball} and \eqref{eq-rw-ball}, we complete the proof of the lemma.
\end{proof}
\begin{remark}
A sharp version of \eqref{eq-rw-ball} with the exact large deviation rate was derived in \cite{DR79}, but we do not need such sharp estimate here.
\end{remark}
Lemma~\ref{lem-lambdac} justifies our choice of considering the right tail of $X_v$ only above the threshold $\beta_\chi$ for some small $\chi>0$, since there is at least one site $v \in B_n(0)$ with $X_v \geq \beta_\chi$ and thus the extremal level set is above $\beta_\chi$. In what follows, we always choose $\chi>0$  such that \eqref{eq:lambdac} holds (and it will become clear that eventually we will choose a $\chi>0$ depending only on $(d, \pe)$).
\begin{proposition}
\label{tail}
 For all $\chi>0$ and $\beta \geq \beta_\chi$, we have 
  \begin{equation}
  \label{eq:tail}
    \P(X_v \geq \beta) \leq c_1 k_n^d \P(X_v \geq c_2 \beta\log n) + n^{-(2d+1)}\,.
  \end{equation}
  where $c_1, c_2$ are positive constants only depends on $(d, \pe, \chi)$.
\end{proposition}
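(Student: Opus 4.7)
The plan is a single-obstacle surgery combined with preimage counting. Given $\omega$ with $X_v(\omega) \geq \beta$, I will canonically select a closed vertex $w = w(\omega)$, set $\omega' = \omega \setminus \{w\}$, and establish two ingredients: (i) $X_v(\omega') \geq c_2 \beta \log n$; and (ii) for each fixed $\omega'$, at most $\lesssim k_n^d$ environments $\omega$ map to it under this surgery. Combined with the Radon--Nikodym identity $\P(\omega) = \tfrac{1-\pe}{\pe}\,\P(\omega')$ for configurations differing at exactly one site, these yield $\P(X_v \geq \beta,\,\text{good event}) \lesssim k_n^d\,\P(X_v \geq c_2 \beta \log n)$; the complementary bad event will be shown to have $\P$-probability $\lesssim n^{-(2d+1)}$.

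To construct $w$, I invoke Proposition~\ref{cgoodtime}, which on an event of $\P$-probability $\geq 1 - n^{-(2d+1)}$ produces a set $\calU = \calU(\omega) \subset \Z^d$ of cardinality $\lesssim \log n$ in which the random walk, conditionally on $\{\tau > k_n\}$, spends a positive fraction of its time. I take $w = w(\omega)$ to be the closest obstacle to $\calU$ (ties broken lexicographically). Using the survival lower bound together with a principal eigenvalue argument, on the good event $\calU$ lies in a ball of radius $\lesssim (\log n)^{1/d}$ around $v$ and is surrounded by obstacles at distance $\lesssim (\log n)^{1/d}$, so $D := \mathrm{dist}(w, \calU) \lesssim (\log n)^{1/d}$ and in particular $w \in B_{k_n}(v)$.

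For the gain (i), I decompose
\begin{equation*}
X_v(\omega') - X_v(\omega) \;=\; \sum_{s=1}^{k_n}\Pr^v(\tau_w = s,\;\tau > s-1 \mid \omega)\,\Pr^w(\tau > k_n - s \mid \omega'),
\end{equation*}
and in each summand force the walk to be in $\calU$ at time $s-D$, traverse an open geodesic of length $D$ from $\calU$ to $w$, and on the right force a symmetric return into $\calU$ from $w$. The Markov property together with the fast mixing of the walk on the open cluster containing $\calU$ (relaxation time $O((\log n)^{2/d}) \ll k_n$) then gives a lower bound $\gtrsim \Ex^v[L_{k_n}(\calU)\,\11_{\tau > k_n}\mid\omega]\cdot|\calU|^{-2} \gtrsim (k_n/(\log n)^2)\,X_v$, the $|\calU|^{-2}$ factor representing the two mixing costs of reaching $w$ and of returning; since $k_n/(\log n)^2 \geq \log n$ by \eqref{eq-def-k-n}, this delivers $X_v(\omega') \gtrsim (\log n)\,X_v$. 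Preimage counting (ii) is immediate: $\omega$ is recovered from $\omega'$ by reinserting the obstacle at $w$, and the constraint $w \in B_{k_n}(v)$ leaves at most $|B_{k_n}(v)| \lesssim k_n^d$ choices. The principal technical obstacle is the gain estimate, where one must produce a clean mixing-based $|\calU|^{-2}$ factor balancing the two detour costs against the total local-time amplification $\Ex^v[L_{k_n}(\calU)\11_{\tau>k_n}] \gtrsim k_n X_v$; the choice of $k_n$ in \eqref{eq-def-k-n} is precisely calibrated so that the resulting gain $k_n/(\log n)^2$ still exceeds $\log n$.
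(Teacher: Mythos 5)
Your overall architecture (single-obstacle surgery, Radon--Nikodym factor $\tfrac{1-\pe}{\pe}$, preimage multiplicity $\lesssim k_n^d$, bad event of probability $\lesssim n^{-(2d+1)}$, and the exact first-hitting decomposition of $X_v(\omega')-X_v(\omega)$) is the same as the paper's, but the key gain estimate (i) is not established, and as sketched it would fail. First, you choose $w$ as the obstacle nearest to the \emph{whole} set $\calU$ of $c$-good points and then invoke the total local time $\Ex^v[L_{k_n}(\calU)\11_{\tau>k_n}]$; only visits to points of $\calU$ within distance $\lesssim(\log n)^{1/d}$ of $w$ can be converted into detours through $w$, and the walk may spend essentially all of its time on $c$-good points far from your $w$. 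The paper fixes this order: on $G_v$ there are at most $\kappa\log n$ $c$-good points in $K_{k_n}(v)$, so by pigeonhole some single $c$-good $x$ has $\ell_x\gtrsim k_n/\log n$, and one then takes $w=x'$ to be the obstacle nearest to \emph{that} $x$ (within $\alpha(\log n)^{1/d}$ by $G_v$). Second, your quantitative cost per detour leg is unsupported. Forcing an ``open geodesic of length $D\asymp(\log n)^{1/d}$'' costs $(2d)^{-D}=\exp(-c(\log n)^{1/d})$, which is smaller than any power of $1/\log n$ and wipes out the intended $\log n$ gain; the substitute ``mixing cost $|\calU|^{-1}$ per leg with relaxation time $O((\log n)^{2/d})$'' has no basis, since the surviving walk is not confined to a cluster of diameter $(\log n)^{1/d}$ (the $c$-good points are only known to lie in $K_{k_n}(v)$, whose radius is $k_n$; the geometric claim that $\calU$ sits in a $(\log n)^{1/d}$-ball around $v$ is not provable and is not what $G_v$ gives). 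The correct input is the harmonic-measure estimate inside the obstacle-free ball $B_b(x)$ around the selected $x$ (the paper uses \cite[Lemma 6.3.7]{Lawler10}), giving a factor $\asymp(\log n)^{1/d-1}$ per leg (with a $\log\log n$ correction for $d=2$), so the gain is $\ell_x\,(\log n)^{2/d-2}\gtrsim\log n$ --- exactly the calibration of $k_n$ in \eqref{eq-def-k-n}. Note that your claimed gain $k_n/(\log n)^2=(\log n)^{2-2/d}$ strictly exceeds this for $d\geq3$, which is a symptom that the $|\calU|^{-2}$ heuristic underestimates the true detour cost rather than a legitimate improvement.

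Two smaller points: Proposition~\ref{cgoodtime} is a deterministic statement valid for every environment (it gives the $k_n/2$ steps on $c$-good sites); the $1-n^{-(2d+1)}$ event you need is $G_v$ of Lemma~\ref{probgv}, which supplies both the $O(\log n)$ bound on the number of $c$-good points and the nearby-obstacle property used to place $w$ in $K_{k_n}(v)$. Also, when converting the gain into $\Pr^v(\tau'>k_n)\geq c_2\beta\log n$ one must avoid overcounting the inserted excursions (the paper's Step 1(a) uniqueness-of-decomposition argument); your identity-based decomposition can serve the same purpose, but the excursion insertion still has to be done at a single well-visited site as above.
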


The proof of Proposition~\ref{tail} consists of two main ingredients:
\begin{enumerate}[(a)]
\item The random walk spends a positive fraction of steps in a subset of size $O(\log n)$ conditioned on survival (in the case when the survival probability is at least $\beta_\chi$). Thus, there exists at least one vertex $x$ which is visited for many times on average conditioned on survival.
\item If we change the environment by removing the closest obstacle around $x$ we will increase the survival probability substantially, and this will lead to the desired tail estimate \eqref{eq:tail}. 
\end{enumerate}
We now describe how we prove (a), i.e., to control the support of the local times for the random walk.
\begin{itemize}
\item We first show in Proposition~\ref{cgoodtime} that conditioned on survival (in the case when the survival probability is at least $\beta_\chi$), the random walk spends at least $k_n/2$ steps on $c$-good vertices (c.f. Definition~\ref{def-c-good}).
\item Next we show in Lemma~\ref{cgoodperco} that each $c$-good vertex has to be contained in a ``connected'' component of $\epsilon$-fair boxes (c.f. Definition~\ref{def-epsilon-fair}) of volume at least  $\Omega(\log n)$.
\item Since $\epsilon$-fair only occurs with small probability by Lemma~\ref{epsilonfair}, we use a percolation type of argument in Lemma~\ref{cgoodprob} to show that $c$-good occurs very rarely, and then in Lemma~\ref{lem-c-good-bound} that the number of $c$-good vertices is  $O(\log n)$. 
\end{itemize}

The ``environment changing'' argument as in (b) is carried out in the \emph{Proof of Proposition~\ref{tail}}, which itself is divided into three steps. One can see the discussions at the beginning of \emph{Proof of Proposition~\ref{tail}} for an outline of its implementation. 

\subsection{Support of local times} This subsection is devoted to proving (a), following the three steps outlined above.

\begin{defn}\label{def-c-good}
A site $v$ in $\Z^d$ is called $c$-good if \begin{equation}
\label{eq:cgood-def}
	\Pr^v(\tau > \lfloor (\log n )^{2/d} \rfloor ) \geq c\,.
\end{equation} 	
\end{defn}
 We first show that the random walk tends to spend many steps on $c$-good vertices. 
\begin{proposition}
\label{cgoodtime}
For any $\chi>0$, there exists $c = c(\chi)>0$ such that for all environments: 
$$\Pr(\tau > k_n, |\{t\leq k_n: S_t \mbox{ is a $c$-good site}\}| \leq k_n/2) \leq \beta_\chi/2\,.$$
\end{proposition}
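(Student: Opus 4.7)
The plan is to iterate the strong Markov property at well-separated visits to non-$c$-good sites, exploiting the fact that from any such site the walk survives the next $\ell:=\lfloor(\log n)^{2/d}\rfloor$ steps with conditional probability at most $c$ by Definition~\ref{def-c-good}. Each such separated visit then contributes a multiplicative factor $c$ to the survival probability. Since the target bound is $\beta_\chi/2=\tfrac12\chi^{k_n/(\log n)^{2/d}}$, it suffices to identify $\Omega(k_n/\ell)$ such visits on the event in question and then pick $c=c(\chi)$ sufficiently small.

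The concrete tool is the greedy sequence of stopping times
\[
\sigma_1=\inf\{t\ge 0:\ S_t\ \text{is not $c$-good}\},\qquad
\sigma_{j+1}=\inf\{t\ge\sigma_j+\ell:\ S_t\ \text{is not $c$-good}\}.
\]
The strong Markov property at $\sigma_j$, combined with $\Pr^{S_{\sigma_j}}(\tau>\ell)\le c$ (which holds by Definition~\ref{def-c-good} since $S_{\sigma_j}$ is not $c$-good), yields inductively
\[
\Pr\bigl(\sigma_r<\infty,\ \tau>\sigma_r+\ell\bigr)\le c^{r}\qquad\text{for every }r\ge 1,
\]
where the induction uses the inclusion $\{\sigma_{j+1}<\infty,\ \tau>\sigma_{j+1}\}\subseteq\{\sigma_j<\infty,\ \tau>\sigma_j+\ell\}$.

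It then remains to argue deterministically that on the event $A\cap\{\tau>k_n\}$, with $A:=\{|\{t\le k_n:\ S_t\ \text{is $c$-good}\}|\le k_n/2\}$, some $r\gtrsim k_n/\ell$ satisfies $\sigma_r+\ell\le k_n$. This is a pigeonhole exercise: on $A$ the walk visits non-$c$-good sites at least $k_n/2$ times in $\{0,\dots,k_n\}$; partitioning this index set into $m\asymp k_n/\ell$ blocks of length $\ell$, at least $m/2$ of those blocks must contain such a visit (since each block holds at most $\ell$ indices), and selecting the first non-$c$-good time in every other occupied block (which guarantees spacings strictly greater than $\ell$, dropping the very last block if necessary) produces a valid sequence of $\ge m/4-O(1)$ non-$c$-good times with all entries $\le k_n-\ell$. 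Since $(\sigma_j)$ is greedy, this forces $\sigma_r\le k_n-\ell$ for some $r\ge m/4-O(1)$. Combined with the previous display, $\Pr(\tau>k_n,A)\le c^{m/4-O(1)}$, and taking for instance $c=\chi^5$ makes this at most $\beta_\chi/2$ for all large $n$. I do not anticipate a serious obstacle; the only delicate point is the deterministic spacing argument, and the resulting bound is automatically uniform in the environment since every step above is pointwise in $\omega$.
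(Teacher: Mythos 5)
Your argument is correct and is essentially the paper's own proof: the paper likewise defines greedy stopping times $\zeta_m$ at non-$c$-good sites separated by $\lfloor(\log n)^{2/d}\rfloor$ steps, iterates the strong Markov property to get a factor $c$ per stopping time, and uses a pigeonhole count to show that on the event in question roughly $k_n/(2(\log n)^{2/d})$ such times occur before $k_n$, before choosing $c=c(\chi)$ small. Your block-by-block spacing argument just makes explicit the pigeonhole step the paper leaves implicit, and the slightly different constant only changes the choice of $c(\chi)$.
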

\begin{proof}
Let $\zeta_0= -1$ and for $m\geq 1$ recursively define 
$$\zeta_m = \inf\{t \geq \zeta_{m-1}+ (\log n)^{2/d}; S_t \mbox{ is not $c$-good site}\}\,.$$
Write $j_n = \lfloor k_n/(2 (\log n)^{2/d}) \rfloor$. By strong Markov property, we get that
\begin{align*}
   \Pr &(\zeta_{j_n}  \leq k_n < \tau ) 
   \leq \Pr ( S_{[\zeta_{m},\zeta_{m}+ \lfloor (\log n)^{2/d} \rfloor]} \mbox{ is open }\forall 1 \leq m \leq j_n-1) \leq c^{j_n-1}\,.
\end{align*}
Note that on the event $E = \{\tau > k_n, |\{t\leq k_n: S_t \mbox{ is a $c$-good site}\}| \leq k_n/2\}$, we have $\zeta_{j_n}  \leq  k_n< \tau$. Thus, we have $\Pr(E) \leq c^{j_n-1}$. Choosing an appropriate 
$c = c(\chi)$ completes the proof of the proposition.
\end{proof}

Next we control the size of $c$-good vertices. For this purpose, we consider disjoint boxes \begin{equation}
\label{eq:def-kbox}
  K_{r}(x) \coloneqq \{y\in \mathbb Z^2: \|x-y\|_\infty \leq r\}
\end{equation} for $x \in (v+ (2r+1)\Z^d)$ and $r>0$ to be selected. 
\begin{defn}\label{def-epsilon-fair}
	A box $K_r(x)$ is called $\epsilon$-fair if there exist $u\in K_r(x)$ such that 
\begin{equation}
\label{eq:fair}
	\Pr^u(\tau \geq r^2 \mbox{ or } \tau > \xi_{K_{2r}(x)}) \geq \epsilon\,.
\end{equation}
\end{defn}
In what follows, we carry out the last two steps in the outline of proving (a): we show in Lemma~\ref{epsilonfair} that the $\epsilon$-fair boxes are rare provided $r = r(\epsilon)$ large enough, and in Lemma~\ref{cgoodperco} we show that a $c$-good point has to be in a cluster consisting of $\Omega(\log n)$ many $\epsilon$-fair boxes. Combining these two lemmas, we can then bound the probability for a vertex to be $c$-good as in Lemma~\ref{cgoodprob}, which leads to Lemma~\ref{lem-c-good-bound} on the $O(\log n)$ bound for the number of $c$-good vertices in  a  box of radius $k_n$.
\begin{lemma}\label{epsilonfair}
For any $\epsilon>0$, there exists $r = r(\epsilon,d, \pe)$ such that 
\begin{equation}\label{eq-epsilon-fair}
\P(K_r(x) \mbox{ is  $\epsilon$-fair}) \leq \epsilon\,.
\end{equation}
\end{lemma}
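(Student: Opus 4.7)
The plan is to use a coarse-graining argument. Fix a large constant $L=L(\epsilon,d,\pe)$ to be chosen below and partition the box $K_{2r}(x)$ into disjoint sub-boxes of side length $L$. Let $E$ be the good-environment event that every sub-box of this partition contains at least one obstacle. By independence of the obstacles,
\[\P(E^c)\le C(d)(r/L)^d\,\pe^{L^d},\]
which will be made at most $\epsilon/2$ by choosing $L$ large and then restricting $r$. On $E$, the plan is to show that $\Pr^u(\tau\ge r^2\text{ or }\tau>\xi_{K_{2r}(x)})<\epsilon$ for every $u\in K_r(x)$, so that $K_r(x)$ fails to be $\epsilon$-fair and the lemma follows.

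The survival-probability bound on $E$ proceeds via a deterministic observation followed by iteration. First, for every walker started in $K_r(x)$, a single step of the simple random walk changes the $\ell^\infty$-distance to $x$ by at most $1$, so $\xi_{K_{2r}(x)}\ge r+1$ deterministically; consequently the event in question forces $\tau>r-L$. Now partition $[0,r-L]$ into blocks of length $Ld$ and iterate the following specific-path estimate. At the start of a block occurring before time $r-L$, the walker (if alive) has made at most $r-L$ steps, so deterministically lies in $K_{2r-L}(x)$; the sub-box containing its current position $v$ is therefore fully inside $K_{2r}(x)$ and, on $E$, contains some obstacle $o$ with $\|v-o\|_\infty\le L$ and hence $\|v-o\|_1\le Ld$. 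The walker then follows the shortest lattice path from $v$ to $o$ (which remains inside that sub-box, hence inside $K_{2r}(x)$) with probability at least $c_L:=(2d)^{-Ld}$ and is killed along the way. Iterating over the $\lfloor(r-L)/(Ld)\rfloor$ blocks yields
\[\Pr^u(\tau>r-L)\le(1-c_L)^{\lfloor(r-L)/(Ld)\rfloor},\]
which drops below $\epsilon/2$ once $r$ is sufficiently large compared to $L$.

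The hard part will be balancing the two constraints on $r$. The iteration requires $r$ at least of order $Ld\,(2d)^{Ld}\log(1/\epsilon)$, whereas the environment bound requires $r$ at most of order $L\,\pe^{-L^d/d}\epsilon^{1/d}$. Because $d\ge 2$, the upper bound $(1/\pe)^{L^d/d}$ grows super-exponentially in $L$ while the lower bound $(2d)^{Ld}$ is only simply exponential in $L$, so for $L=L(\epsilon,d,\pe)$ large enough the window of admissible $r$ is non-empty; choosing any $r$ in that window combines with the previous two paragraphs to give $\P(K_r(x)\text{ is }\epsilon\text{-fair})\le\P(E^c)\le\epsilon$, as required.
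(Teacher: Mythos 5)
Your proof is correct, but it takes a genuinely different route from the paper's. The paper argues in one annealed step: for each starting point $y\in K_r(x)$ it bounds $\E\left[\Pr^y(|S_{[0,r^2]}|>r^{1/2},\tau>r^2)\right]\le \pe^{r^{1/2}-1}$ and $\E\left[\Pr^y(\tau>\xi_{K_{2r}(x)})\right]\le \pe^{r}$ (every surviving path must consist of open sites), adds the environment-free estimate $\Pr^y(|S_{[0,r^2]}|\le r^{1/2})\le e^{-cr}$, and then finishes with Markov's inequality and a union bound over the $(2r+1)^d$ points of $K_r(x)$. You instead work quenched: you condition on the environment event $E$ that every mesoscopic $L$-box contains an obstacle, and on $E$ you kill the walk by a block-by-block path-forcing argument that is uniform in the starting point; your reduction of both alternatives in the definition of $\epsilon$-fair to the single event $\{\tau>r-L\}$, which keeps the walk inside $K_{2r-L}(x)$ where $E$ applies, is exactly the observation that makes this work. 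The trade-off: the paper's computation is shorter and yields a bound decaying stretched-exponentially in $r$, so its conclusion holds for all large $r$; your version with $L$ fixed only produces a window of admissible $r$ (the union bound over sub-boxes grows with $r$), which is nevertheless all the lemma asserts and all that is used later, since $r$ remains a constant as $n\to\infty$ --- and it could be upgraded to all large $r$ by letting $L\asymp(\log r)^{1/d}$. Your argument also avoids the union bound over starting points and uses nothing beyond $\pe<1$. Two cosmetic points for a final write-up: when $4r+1$ is not a multiple of $L$, the boundary sub-boxes of your partition are smaller and the bound $\pe^{L^d}$ fails for them, so either take sub-boxes of side between $L$ and $2L$ or tile $\Z^d$ by $L$-boxes and require an obstacle in every tile meeting $K_{2r}(x)$ (your restriction to positions in $K_{2r-L}(x)$ makes either fix painless); and the requirement that the forced path stay inside $K_{2r}(x)$ is not actually needed, since you are only bounding $\Pr^u(\tau>r-L)$.
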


\begin{proof}
Let $y$ be an arbitrary vertex in $K_r(x)$. 
By the independence of the environment and random walk, we have    
\begin{align}\label{eq-Y-1}
  \E\left[\Pr^y(|S_{[0,r^2]}| >r^{1/2},\tau>r^2)\right] = \P \otimes\Pr^y(|S_{[0,r^2]}| >r^{1/2},\tau>r^2)\leq \pe^{r^{1/2}-1}\,.
\end{align}
and
\begin{align}\label{eq-Y-2}
  \E\left[\Pr^y(\tau>\xi_{K_{2r}(x)})\right] = \P \otimes\Pr^y(\tau>\xi_{K_{2r}(x)})\leq \pe^{r}\,.
\end{align}

In addition, note that in every $r$ steps the random walk has a positive probability to visit at least $r^{1/2}$ distinct sites. Thus,
there exists a constant $c >0$ such that
$$\Pr^y(|S_{[0,r^2]}|\leq r^{1/2}) \leq  e^{-cr} \,.$$
Combined with \eqref{eq-Y-1} and \eqref{eq-Y-2} it implies that 
$$\sum_{y\in K_r(x)}\P\Big(\Pr^y(\tau \geq r^2 \mbox{ or } \tau > \xi_{K_{2r}(x)}) \geq \epsilon \Big) \leq (2r+1)^d \epsilon^{-1} (\pe^{r^{1/2}-1} +\pe^{r}+ e^{-cr} )\,.$$
Choosing $r = r(\epsilon, d, \pe)$ large enough completes the proof of the lemma.
\end{proof}
We will always choose 
\begin{equation}\label{eq-epsilon}
\epsilon =  \min(c/2, (2d)^{-3^{d+1}}) \mbox{ and } r = r(d,\epsilon, \pe)
\end{equation}
 such that \eqref{eq-epsilon-fair} holds.  We fix $v\in \mathbb Z^d$ and define the adjacency relation for $\epsilon$-fair boxes $\{K_r(x),~ x\in (v+ (2r+1)\Z^d)$ to be the following:
\begin{equation}\label{eq-epsilon-fair-neighboring}
	K_r(x) \sim K_r(y) \iff \exists x' \in K_r(x), y'\in K_r(y)  \ s.t. \ x' \sim y'.
\end{equation}
We next show that in order for a vertex $v$ to be $c$-good, it requires $v$ to be in a cluster consisting of $\Omega(\log n)$ many $\epsilon$-fair boxes --- here a cluster is a connected component where each ``vertex''  corresponds to an $\epsilon$-fair box and the neighboring relation is given by \eqref{eq-epsilon-fair-neighboring}. Thus, $c$-good is a rare event. 
To this end, let  $L_v $ be the subset of $(v+ (2r+1)\Z^d)$ such that $\{K_r(x),~ x\in L_v\}$ is the cluster of $\epsilon$-fair boxes in $B_{h (\log n)^{1/d}}(v)$ which contains $v$. 
\begin{lemma}
\label{cgoodperco}
For any $c>0$ and $\epsilon$ satisfying \eqref{eq-epsilon}, 
there exist $l = l(d,c, \epsilon)$ and $h = h(d,c, \epsilon)$ such that $v$ is not a $c$-good vertex if $|L_v| \leq l\log n$.
\end{lemma}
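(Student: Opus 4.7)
The plan is to prove the contrapositive: if $|L_v| \leq l \log n$, then $\Pr^v(\tau > T) < c$ where $T = \lfloor(\log n)^{2/d}\rfloor$. First dispose of the case where $K_r(v)$ is itself not $\epsilon$-fair (so $L_v = \emptyset$): applying the negation of \eqref{eq:fair} at $u=v$ gives $\Pr^v(\tau \geq r^2) < \epsilon \leq c/2$ by \eqref{eq-epsilon}, and $T \geq r^2$ for large $n$. So assume $K_r(v)$ is $\epsilon$-fair, hence $v \in L_v$.

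Next, coarse-grain the walk using the box partition $\{K_r(x) : x \in v + (2r+1)\Z^d\}$. Let $\sigma_0 = 0$ and recursively
$\sigma_{i+1} = \min\bigl(\sigma_i + r^2,\ \inf\{t > \sigma_i : S_t \notin K_{2r}(x_i)\}\bigr)$,
where $x_i$ is the unique lattice point with $S_{\sigma_i} \in K_r(x_i)$. A short geometric check shows that whenever $x_{i+1} \neq x_i$ one has $\|x_{i+1} - x_i\|_\infty = 2r+1$, so the coarse walk either stays put or jumps to an $\ell^\infty$-adjacent lattice site. Define the stopping time $\eta = \inf\{\sigma_j : j \geq 0,\ x_j \notin L_v\}$. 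By induction on $j$, as long as $\sigma_j < \eta$ one has $x_j \in L_v$, since starting from $v$ and moving through $\ell^\infty$-adjacent $\epsilon$-fair boxes inside $B_{h(\log n)^{1/d}}(v)$ keeps one inside the maximal connected component $L_v$. Consequently, during $[0, \eta)$ the walk stays inside $U := \bigcup_{x \in L_v} K_{2r}(x)$, a set of volume $\leq l (4r+1)^d \log n$.

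Now decompose $\Pr^v(\tau > T) \leq \Pr^v(\tau > T,\ \eta > T/2) + \Pr^v(\tau > T,\ \eta \leq T/2)$. For the first summand, the walk stays in $U$ for at least $T/2$ steps; invoking the discrete Faber--Krahn inequality on $\Z^d$ (so the principal Dirichlet eigenvalue of $U$ is $\gtrsim |U|^{-2/d} \gtrsim l^{-2/d} (\log n)^{-2/d}$) the probability is at most $\exp(-c_{d,r}\, l^{-2/d})$. For the second summand, at time $\eta$ the box $K_r(x_\eta)$ is either (a) non-$\epsilon$-fair, or (b) has its center outside $B_{h(\log n)^{1/d}}(v)$. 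In (a), strong Markov at $\eta$ together with the negation of \eqref{eq:fair} at $u = S_\eta$ (using $T - \eta \geq T/2 \geq r^2$ for large $n$) gives $\Pr^{S_\eta}(\tau > T - \eta) \leq \Pr^{S_\eta}(\tau \geq r^2) < \epsilon$, contributing at most $\epsilon$. In (b), the maximal Hoeffding bound for simple random walk yields $\Pr^v\!\bigl(\max_{t \leq T} \|S_t - v\|_\infty \geq h(\log n)^{1/d}/2\bigr) \leq 2d\, e^{-c h^2}$.

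Combining the three contributions gives $\Pr^v(\tau > T) \leq \epsilon + \exp(-c_{d,r}\, l^{-2/d}) + 2d\, e^{-c h^2}$. Since $\epsilon \leq c/2$ by \eqref{eq-epsilon}, choosing $l = l(d,c,\epsilon)$ small enough and $h = h(d,c,\epsilon)$ large enough drives the right-hand side strictly below $c$, as required. The delicate step is the discrete Faber--Krahn bound: it is precisely there that $|L_v| = O(\log n)$ forces the principal eigenvalue of $U$ to be $\Omega((\log n)^{-2/d})$, matching the timescale $T = (\log n)^{2/d}$; the rest is a routine strong-Markov decomposition harvesting the quantitative killing rate encoded in the negation of \eqref{eq:fair}.
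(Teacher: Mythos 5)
Your case analysis at the stopping time $\eta$ has a genuine hole. You sample the walk only at the times $\sigma_j$, and as your own geometric check shows, the sampled box centers $x_j$ can jump to \emph{diagonal} ($\ell^\infty$-adjacent) neighbors. But $L_v$ is the cluster of $\epsilon$-fair boxes under the adjacency \eqref{eq-epsilon-fair-neighboring}, which requires a nearest-neighbor pair $x'\sim y'$ and hence holds only for face-adjacent boxes; a diagonally adjacent box shares no nearest-neighbor pair with $K_r(x_j)$. Consequently, at time $\eta$ the box $K_r(x_\eta)$ may be $\epsilon$-fair and have its center well inside $B_{h(\log n)^{1/d}}(v)$ and yet fail to lie in $L_v$, simply because it is attached to $L_v$ only diagonally (or belongs to an entirely different fair cluster). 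This third possibility is exactly the one your dichotomy (a)/(b) omits: in it you have no killing estimate from the negation of \eqref{eq:fair}, no containment in $U$, and no volume control, since the hypothesis $|L_v|\leq l\log n$ says nothing about the size of the other fair cluster the walk has just entered. Your justifying sentence that moving through $\ell^\infty$-adjacent fair boxes ``keeps one inside $L_v$'' is false for the cluster as defined in the paper, and the argument stalls precisely where you need to harvest the killing rate.

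The repair is to stop the walk at its \emph{actual} first exit time from $\bigcup_{x\in L_v}K_r(x)$ rather than at a sampled time: since consecutive positions of the walk are $\Z^d$-neighbors, the box entered at that exit is face-adjacent to a box of $L_v$ in the sense of \eqref{eq-epsilon-fair-neighboring}, so by maximality of the cluster it must be non-fair whenever it lies in the ball, and your (a)/(b) dichotomy becomes valid; this is in effect what the paper does (it combines the heat-kernel bound $\sup_x\Pr^v(S_t=x)\lesssim t^{-d/2}$ with a Markov-inequality bound on the occupation time of $\bigcup_{x\in L_v}K_r(x)$, then uses the exit point). A secondary, fixable imprecision: converting the Faber--Krahn bound $1-\lambda_{\max}\gtrsim |U|^{-2/d}$ into $\Pr^v(\xi_U>T/2)\leq \exp(-c_{d,r}l^{-2/d})$ through the standard inequality $\Pr^x(\xi_U>m)\leq \sqrt{|U|}\,\lambda_{\max}^m$ carries a $\sqrt{|U|}\asymp\sqrt{l\log n}$ prefactor that ruins uniformity in $n$ at this time scale; you should instead iterate the elementary bound $\Pr^x(\xi_U>m_0)\leq C|U|m_0^{-d/2}\leq e^{-1}$ with $m_0\asymp|U|^{2/d}$ (the same heat-kernel input the paper uses), which does give the clean bound you claim.
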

\begin{proof}
For any $d\geq 2$, there exists a constant $\theta = \theta(d)$ such that \[
  \sup_{x \in \Z^d}\Pr^v(S_t = x) \leq \theta t^{-d/2}~~~\mbox{for all }t \geq 1.
\]
Let $m = \lfloor (\log n)^{2/d}\rfloor -r^2$. For all $0<\Delta<1$, there exists a constant $h = h(\Delta)$ such that
\begin{equation}\label{eq-in-the-ball} 
\Pr^v(S_{[0,m]} \subset B_{2^{-1}h (\log n)^{1/d}}(v))\geq 1 - \Delta\,.
\end{equation}
 In addition, if $|L_v| \leq l\log n$ we have 
  \begin{align*}
    \Ex^{v}\left[\sum_{i=1}^{m} \11_{\{S_i \in  \cup_{x \in L_v} K_r(x)\}}\right] & = \sum_{i=1}^{m} \Pr^v(S_i \in \cup_{x \in L_v} K_r(x) ) \\
    &\leq \lfloor\Delta m \rfloor + \sum_{i=\lfloor\Delta m \rfloor + 1}^{m}|\cup_{x \in L_v} K_r(x)|\theta i^{-d/2} \\
 & \leq \Delta m  + \theta(m - \lfloor\Delta m \rfloor) (2r+1)^d|L_v|\lfloor\Delta m +1 \rfloor^{-d/2}\\
 &\leq \Delta m + \theta\Delta^{-d/2}(2r+1)^d 2^{d/2}ml\,,
  \end{align*} 
  where the last inequality holds when $\log n \geq (2r)^d$. Setting 
  $$0<\Delta < (c - \epsilon)/(3-3\epsilon) \mbox{ and }l = 2^{-d/2}\Delta^{1 + d/2} \theta^{-1}(2r+1)^{-d}\,,$$ we get that
  $$ \Ex^{v}\left[\sum_{i=1}^{m} \11_{\{S_i \in \cup_{x \in L_v} K_r(x) \}}\right]  \leq 2\Delta m\,.$$ 
This implies that $\Pr^v(\xi' > m) \leq 2\Delta$, where $\xi' = \inf\{t \geq 0: S_t \not \in \cup_{x \in L_v} K_r(x)\}$. Combined with \eqref{eq-in-the-ball}, it yields that  with probability at least $1-3\Delta$ the event $ \{ S_{[0,m]} \subset B_{2^{-1}h (\log n)^{1/d}}(v), \xi' \leq m \}$ occurs. Further, on this event, we have that $S_{\xi'}$ is not in an $\epsilon$-fair box, and thus $\Pr^{S_{\xi'}}(\tau > r^2) \leq \epsilon$. 
Therefore,
\begin{equation*}
\Pr^v(\tau>\lfloor (\log n)^{2/d}\rfloor) \leq 1 - (1- 3\Delta)(1-\epsilon) <c\,. \qedhere
\end{equation*}
\end{proof}

\begin{lemma}
\label{cgoodprob}
There exists $\delta = \delta(c,d, \pe)>0$ such that 
$$ \P(|L_v| > l\log n) \leq n^{-\delta}. $$
\end{lemma}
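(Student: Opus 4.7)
The plan is to treat $L_v$ as a cluster in a finite-range-dependent site percolation on the renormalized lattice $v + (2r+1)\Z^d$ and apply a Peierls-type bound. First I would pin down the dependency range: the event $\{K_r(x) \text{ is $\epsilon$-fair}\}$ is measurable with respect to the obstacle configuration inside $K_{2r}(x)$, so two centers $x,y \in v+(2r+1)\Z^d$ with $|x-y|_\infty \geq 2(2r+1)$ — equivalently, renormalized $\ell^\infty$-distance at least $2$ — have disjoint $2r$-neighborhoods and induce independent $\epsilon$-fair events. A direct calculation also confirms that the adjacency \eqref{eq-epsilon-fair-neighboring} on $v+(2r+1)\Z^d$ is the usual nearest-neighbor graph, since only axis displacements $\pm(2r+1)e_i$ between centers produce pairs of boundary points at $\ell^1$-distance $1$.

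Next I would extract an independent subfamily via a $3^d$-coloring. For any connected $\Lambda \subset v+(2r+1)\Z^d$ of size $k$ containing $v$, partition $\Lambda$ by the residues mod $3$ of the renormalized coordinates; pigeonhole produces a monochromatic subfamily $\Lambda' \subseteq \Lambda$ with $|\Lambda'| \geq k/3^d$, and any two distinct centers of $\Lambda'$ are at renormalized $\ell^\infty$-distance at least $3$. The $\epsilon$-fair events on $\Lambda'$ are then mutually independent, so
\begin{equation*}
\P\bigl(\text{every box in } \Lambda \text{ is } \epsilon\text{-fair}\bigr) \leq \epsilon^{|\Lambda'|} \leq \epsilon^{k/3^d}.
\end{equation*}

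Since $\{|L_v| \geq k\}$ forces the existence of some connected $\epsilon$-fair subset of size $k$ containing $v$, a union bound combined with the standard site-animal estimate — the number of connected subsets of $\Z^d$ of size $k$ containing a given vertex is at most $(2de)^k$ — yields
\begin{equation*}
\P(|L_v| \geq k) \;\leq\; (2de)^k \epsilon^{k/3^d} \;=\; \bigl((2de)\,\epsilon^{1/3^d}\bigr)^k.
\end{equation*}
The choice $\epsilon \leq (2d)^{-3^{d+1}}$ from \eqref{eq-epsilon} gives $\epsilon^{1/3^d} \leq (2d)^{-3}$, hence $(2de)\,\epsilon^{1/3^d} \leq e/(2d)^2 =: \rho < 1$ for every $d\geq 2$. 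Taking $k = \lceil l\log n \rceil$ and setting $\delta = l\log(1/\rho)$ (or any smaller positive value, to absorb the ceiling) then gives $\P(|L_v| > l\log n) \leq n^{-\delta}$, as required.

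The main subtlety is the first step: one must correctly identify both the spatial dependence range of the $\epsilon$-fair events and the precise form of the renormalized adjacency, so that the $3^d$-coloring genuinely extracts a mutually independent subfamily whose size is at least a dimensional constant fraction of $k$. Once this geometric setup is secured, the animal-counting union bound combined with the smallness of $\epsilon$ built into \eqref{eq-epsilon} makes the rest routine.
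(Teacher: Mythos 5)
Your proposal is correct and follows essentially the same route as the paper: extract from the cluster a subfamily of density $3^{-d}$ whose enlarged boxes are disjoint (hence independent $\epsilon$-fair events), bound the probability of any fixed connected family of $k$ fair boxes by $\epsilon^{k/3^d}$, and union-bound over lattice animals rooted at $v$, using the smallness of $\epsilon$ from \eqref{eq-epsilon} to get exponential decay in $k$ and hence $n^{-\delta}$ at $k \asymp l\log n$. The only differences are cosmetic (mod-$3$ coloring instead of a greedy extraction, and the animal count $(2de)^k$ in place of the paper's $(2d)^{2k}$), and your distance-$3$ separation is if anything slightly safer regarding the exact dependency range of the $\epsilon$-fair event.
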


\begin{proof}
For all $x \in L_v$, the number of points $y \in L_v$ such that $K_{2r}(x) \cap K_{2r}(y) \not = \varnothing$ is at most $3^d$.
Therefore, there exists a subset $I$ of $L_v$, such that $|I| \geq |L_v|/3^d$ and that $K_{2r}(x) \cap K_{2r}(y) = \varnothing$ for different $x,y \in L_v$. Hence events $\{K_r(x)$ is $\epsilon$-fair$\}$ for $x\in I$ are independent of each other. In addition, the number of connected components of $|L_v|$ boxes is no more than $(2d)^{2|L_v|}$ --- this is a fairly standard combinatorial computation and one could see, e.g., \cite{WS90} for a reference.
Therefore,
\begin{align*}
  \P(|L_v|> l\log n)  \leq \sum_{j \geq l \log n}(2d)^{2 j} \cdot \epsilon^{j/3^d} \leq 2 n^{l \log(4d^2 \epsilon^{1/3^d})}\,.
\end{align*}
Combined with Lemma~\ref{cgoodperco} and \eqref{eq-epsilon}, it completes the proof of the lemma.
\end{proof}

\begin{lemma}\label{lem-c-good-bound}
For all $v \in \mathbb Z^d$ and $\kappa > 0$,
$$\P(K_{k_n}(v) \mbox{ contains more than } \kappa\log n  \mbox{ $c$-good points}) \leq n^{-\kappa \delta/(4h)^d}\,. $$
\end{lemma}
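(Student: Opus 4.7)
The strategy is to combine Lemma~\ref{cgoodperco} with Lemma~\ref{cgoodprob} via a packing-plus-independence argument. By Lemma~\ref{cgoodperco}, the event ``$u$ is $c$-good'' is contained in $\{|L_u|>l\log n\}$, and this latter event is measurable with respect to the obstacle configuration in a sup-norm neighborhood of $u$ of radius $h(\log n)^{1/d}+2r$, since $L_u$ consists of $\epsilon$-fair boxes $K_r(\cdot)$ lying inside $B_{h(\log n)^{1/d}}(u)$ and the $\epsilon$-fairness of $K_r(x)$ depends only on the environment in $K_{2r}(x)$. Consequently, any collection of vertices $u_1,\dots,u_j$ with pairwise sup-norm separation exceeding $2h(\log n)^{1/d}+4r$ produces mutually independent events $\{|L_{u_i}|>l\log n\}$, each of $\P$-probability at most $n^{-\delta}$.

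To extract many such well-separated $c$-good vertices, I would partition $K_{k_n}(v)$ into sub-cells of side length $2h(\log n)^{1/d}$. Each sub-cell contains at most $(2h)^d\log n$ lattice points, so if $K_{k_n}(v)$ contains more than $\kappa\log n$ $c$-good vertices then at least $\kappa/(2h)^d$ sub-cells carry one. A $2^d$-coloring of the sub-cells by the parity of their coarse coordinates then yields a color class containing at least $k:=\lceil\kappa/(4h)^d\rceil$ such sub-cells; selecting one $c$-good vertex per chosen sub-cell gives a $k$-tuple with pairwise sup-norm separation at least $4h(\log n)^{1/d}$, which comfortably exceeds the independence radius computed above once $n$ is large (so that $h(\log n)^{1/d}\gg r$).

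A union bound over all such $k$-tuples then gives
\begin{equation*}
\P\bigl(K_{k_n}(v)\text{ has more than }\kappa\log n\ c\text{-good points}\bigr)\leq (2k_n+1)^{dk}\cdot n^{-\delta k},
\end{equation*}
and since $k_n$ is only polylogarithmic in $n$, the combinatorial factor $(2k_n+1)^{dk}=n^{o(1)\cdot k}$ is absorbed into the exponential decay, producing the stated bound $n^{-\kappa\delta/(4h)^d}$ for all sufficiently large $n$ (possibly after an innocuous reduction of the constant $\delta$ from Lemma~\ref{cgoodprob}).

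The main subtlety I expect is getting the constant $(4h)^d$ in the exponent to fall out cleanly: it arises from the packing estimate $(2h)^d\log n$ per sub-cell combined with the $2^d$ factor lost in the coloring, and one must verify that the independence radius $2h(\log n)^{1/d}+O(r)$ is indeed smaller than the $4h(\log n)^{1/d}$ separation guaranteed by the coloring and that the polylogarithmic prefactor $k_n^{dk}$ is genuinely negligible, so that no extra constant bleeds into the advertised exponent.
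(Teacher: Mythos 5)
Your proposal is in substance the paper's own argument: reduce ``$c$-good'' to $\{|L_u|>l\log n\}$ via Lemma~\ref{cgoodperco}, exploit the finite range of that event (radius $h(\log n)^{1/d}+2r$) to get independence for well-separated vertices, pigeonhole into $O(\log n)$ classes, and finish with a union bound over roughly $\kappa/(4h)^d$ independent events of probability $n^{-\delta}$ each; the paper does exactly this, using residue classes of lattice points modulo $q=2\lceil h(\log n)^{1/d}+2r\rceil$ and a binomial tail bound instead of your sub-cells plus parity coloring. Two small quantitative points, though. First, your separation claim is off by a factor of $2$: two distinct same-parity cells of side $s=2h(\log n)^{1/d}$ are separated (in sup-norm, for points inside them) only by about $s=2h(\log n)^{1/d}$, not $4h(\log n)^{1/d}$, which falls short of the independence radius $2h(\log n)^{1/d}+4r$ by the constant $4r$; the trivial fix is to take cells of side $2\lceil h(\log n)^{1/d}+2r\rceil$, i.e.\ precisely the paper's $q$, which changes nothing asymptotically. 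Second, your bookkeeping lands exactly on $k=\lceil\kappa/(4h)^d\rceil$, leaving no slack to absorb the $(2k_n+1)^{dk}=n^{o(1)k}$ enumeration factor, whence your caveat about shrinking $\delta$; the paper avoids this by pigeonholing to $\kappa/(3h)^d$ points in some residue class (its number of classes is $q^d\leq(3h)^d\log n$), so the gap between $\delta\kappa/(3h)^d$ and the advertised $\delta\kappa/(4h)^d$ absorbs all polylogarithmic prefactors and the lemma is obtained with the original $\delta$. With these two adjustments your proof is complete and identical in spirit to the paper's.
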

\begin{proof}
Write $q = 2\lceil h(\log n)^{1/d}  + 2r\rceil$.
Let $K_i = K_{k_n}(v) \cap (i + q \Z^d)$ for $i \in  \{1, \ldots, q \}^d$. 
For any fixed $i$, the events $\{|L_v| \geq l\log n\}$ for $v \in K_i$ are independent. Thus, for large $n$
\begin{align*}
  &\P(K_{k_n}(v) \mbox{ contains more than } \kappa \log n  \mbox{ $c$-good points})  \\
  \leq & \sum_{i\in\{1, \ldots, q\}^d} \P(|\{v \in K_i: |L_v| \geq l\log n \}| \geq \kappa/(3h)^d)\\
 \leq&  q^d\P(\mathrm{Bin}(k_n^d,n^{-\delta})  \geq \kappa/(3h)^d)\,,
\end{align*}
where the last inequality follows from Lemma~\ref{cgoodprob} and $\mathrm{Bin}(k_n^d,n^{-\delta})$ is a binomial random variable with probability $n^{-\delta}$ and $k_n^d$ trials. At this point, the desired bound follows from a standard large deviation estimate for Binomial random variables. 
\end{proof}

\begin{lemma}
\label{probgv}
For $v\in \mathbb Z^d$, let $G_v = G_v(\alpha, \kappa)$ be the event that 

\noindent (1) For every $u\in K_{k_n}(v)$, there exists a closed site within distance $\alpha (\log n)^{1/d}$.

\noindent (2) The number of $c$-good points in $K_{k_n}(v)$ is at most $\kappa \log n$.

Then there exist $\kappa, \alpha>0$ depending only on $(c, d, \pe)$  such that
 $$\P(G_v) \geq 1 - n^{-(2d+1)}\,.$$
\end{lemma}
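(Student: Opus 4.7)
The plan is to prove the two conditions defining $G_v$ hold with probability at least $1 - \tfrac{1}{2}n^{-(2d+1)}$ each, and then combine by a union bound.

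For condition (2), I would simply invoke Lemma~\ref{lem-c-good-bound}, which already provides the bound
\[
\P\bigl(K_{k_n}(v) \text{ has more than } \kappa \log n \text{ $c$-good points}\bigr) \leq n^{-\kappa \delta/(4h)^d}.
\]
Choosing $\kappa = \kappa(c,d,\pe)$ large enough, say $\kappa > (2d+2)(4h)^d/\delta$, makes this bound at most $n^{-(2d+2)}$. This step is immediate and requires no new work beyond selecting the constant.

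For condition (1), I would use a straightforward union bound. For each fixed $u\in \mathbb Z^d$, the probability that the ball $B_{\alpha(\log n)^{1/d}}(u)$ is free of obstacles equals $\pe^{|B_{\alpha(\log n)^{1/d}}(u)|}$. Since $|B_r| \asymp r^d$, there is a constant $a = a(d)>0$ such that this probability is at most $\pe^{a \alpha^d \log n} = n^{-a\alpha^d |\log \pe|}$. Taking a union bound over the $(2k_n+1)^d$ vertices $u \in K_{k_n}(v)$, and using $k_n = (\log n)^{O(1)}$ from \eqref{eq-def-k-n}, gives
\[
\P\bigl((1) \text{ fails}\bigr) \leq (2k_n+1)^d \cdot n^{-a \alpha^d |\log \pe|} \leq (\log n)^{O(d)} \cdot n^{-a\alpha^d |\log \pe|}.
\]
Since the logarithmic factor is absorbed by any polynomial, choosing $\alpha = \alpha(d,\pe)$ sufficiently large (e.g.\ so that $a\alpha^d |\log \pe| \geq 2d+3$) makes this bound at most $n^{-(2d+2)}$ for all large $n$.

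Combining the two estimates, $\P(G_v^c) \leq 2 n^{-(2d+2)} \leq n^{-(2d+1)}$ for $n$ sufficiently large, which gives the desired bound. There is no genuine obstacle here: condition (2) is just a restatement of the previous lemma with a concrete constant, while condition (1) is a standard Poisson-type estimate on the probability of having a large obstacle-free ball, a bound that is actually much stronger than needed since $|K_{k_n}(v)|$ is only poly-logarithmic while the obstacle-free probability decays polynomially in $n$.
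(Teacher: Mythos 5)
Your proposal is correct and follows essentially the same route as the paper: condition (1) is handled by a union bound over the $(2k_n+1)^d$ vertices of $K_{k_n}(v)$ against the event that a ball of radius $\alpha(\log n)^{1/d}$ is obstacle-free (probability at most $\pe^{c_d\alpha^d\log n}$, polynomially small in $n$ for $\alpha$ large), and condition (2) is exactly Lemma~\ref{lem-c-good-bound} with $\kappa$ chosen large enough that $n^{-\kappa\delta/(4h)^d}\leq n^{-(2d+2)}$. The only difference is that you make the choice of constants explicit where the paper leaves them implicit; no gap.
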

\begin{proof}
Since $|B_{ \alpha (\log n)^{1/d}}(x)| \geq (\alpha/d)^d \log n$, we have that
\begin{align*}
\P&(\mbox{There exists } x \in K_{k_n}(v) \mbox{ such that } B_{ \alpha (\log n)^{1/d}}(x) \cap  \ob= \varnothing) \\
&\leq (2k_n + 1)^d \pe^{ (\alpha/d)^d \log n}\,.
\end{align*}
This addresses the first requirement for the event $G_v$. The second requirement for $G_v$ is addressed in Lemma~\ref{lem-c-good-bound}. Altogether, we conclude that $\P(G_v) \geq 1 - n^{-(2d+1)}$ with appropriate choices of $\alpha$ and $\kappa$, as desired.
\end{proof}

\subsection{Environment changing argument} We now prove Proposition~\ref{tail}.

\begin{proof}[Proof of Proposition \ref{tail}] We choose $c = c(\chi)$ as in Proposition~\ref{cgoodtime}.
The proof of Proposition \ref{tail} consists of three steps as follows:
\begin{enumerate}
\item For each $c$-good point $x$, removing its closest obstacle would enlarge the survival probability by a factor at least $\ell_x (\log n)^{2/d-2} (\log \log n)^{-1}$ (where the $\log \log n$ terms only appears when $d = 2$). Here we denote by $\ell_x = \Ex\left[\sum_{t=0}^{k_n} \11_{ \{S_t = x\} } \mid \tau > k_n\right]$ the expected number of visits to $x$ conditioned on survival.
\item Combining Step 1 with Proposition~\ref{cgoodtime}, we show that there exists at least one $c$-good point $x$ such that removing the closest obstacle near $x$ enlarge the survival probability by a factor of order $\log n$.
\item The operation of removing the closest obstacle has preimage with multiplicity bounded by $O(k_n^d)$, which leads to the term of $c_1k_n^d$ in \eqref{eq:tail}.
\end{enumerate}

We now carry out the proof steps outlined as above. 

\noindent {\bf Step 1.}  For each $c$-good site $x \in K_{k_n}(v)$, let $x'$ be one of the closed sites nearest to $x$ (with respect to the Euclidean distance) and let $x^*$ be one of the neighbors of $x'$ such that $|x - x^*| <|x-x'|$ (so $x^*$ is open). Let $b = |x - x^*|$, $ \mathring{B}_b(x) = B_b(x) \setminus \{x\}$. For $u, v\in \mathbb Z^d$,  $A\subset \mathbb Z^d$ and $r\geq 1$, we define 
\begin{align}
\begin{split}
\K_{A,r}(u, v)  &= \{\omega = [\omega_0,\ldots, \omega_r]: \omega_0 = u,\omega_r =v,\omega_i \in A \mbox{ for } 1\leq i\leq r-1 \}\,,\\
\K_A(u, v) &= \cup_{r=1}^\infty \K_{A,r}(u, v),\quad \K_{A,r}(u) = \cup_{v \in \Z^d}\K_{A,r}(u, v) \,. \label{eq-def-K}
\end{split}
\end{align}
The key in Step 1 is to construct a collection of paths which does not hit any obstacle except $x'$, such that the collection is large in comparison with the number of paths which does not hit any obstacle.
 To this end, we let $W_x$ be the collection of paths of form $\omega^1 \oplus \pi^1 \oplus [x^*,x',x^*]\oplus \pi^2 \oplus \omega^2$ (here $\oplus$ denotes for the natural concatenation for paths), where $\omega^1, \pi^1, \pi^2, \omega^2$ are ranging over all choices satisfying
 \begin{itemize}
 \item $\omega^1 \in \K_{\ob^c}(v,x), \omega^2 \in \cup_{y\in \ob^c}\K_{\ob^c}(x, y), |\omega^1|+|\omega^2|=k_n$;
 \item $\pi^1 \in \K_{\mathring{B}_b(x)} (x,x^*) ,\pi^2 \in \K_{\mathring{B}_b(x)} (x^*,x)$.
 \end{itemize}

In order to complete Step 1, we need to verify the following two ingredients (which we check below). 
\begin{enumerate}[(a)]
\item We prove that if $\gamma \in W_x$, then the above decomposition into four concatenated parts is unique, and there exists no $\tilde \gamma \in W_x$ which is a continuation of $\gamma$ (meaning, that can be written in the form $\gamma \oplus \pi$ for a non trivial $\pi$).
\item We use this observation to obtain a lower bound on the probability of observing a path in $W_x$ in the first steps of the random walk.
\end{enumerate}

\noindent \underline {\bf Step 1 (a)}.
As $x'$ is visited only once, the separation between $\pi^1$ and $\omega^1$ and that between $\pi^2$ and $\omega^2$ must correspond respectively to the last visit of $x$ before visiting $x$ and the first visit of $x$ after visiting $x'$. This yields uniqueness of the decomposition. The condition $|\omega^1| + |\omega^2|= k_n$ implies that the continuation of a path in $W_x$ cannot belong to $W_x$.

\medskip

\noindent \underline{\bf Step 1 (b).} We will abuse the notation by writing 
$$\Pr^v(W) = \Pr^v([S_0,S_1,...,S_{|\omega|}] = \omega \mbox{ for some } \omega \in W)$$ where $W$ is a collection of paths. Then
\begin{align*}
  \Pr^v(W_x) = \quad \mathclap{\sum_{\omega \in \K_{\ob^c,k_n}(v)}}\quad \  (2d)^{-k_n-2} \Pr^v(\K_{\mathring{B}_b(x)} (x,x^*) )\cdot \Pr^v(\K_{\mathring{B}_b(x)} (x^*,x))\cdot \sum_{i\geq 0}^{k_n} \11_{x}(\omega_i)\,,
\end{align*}
\begin{figure}
	\includegraphics[width=3in]{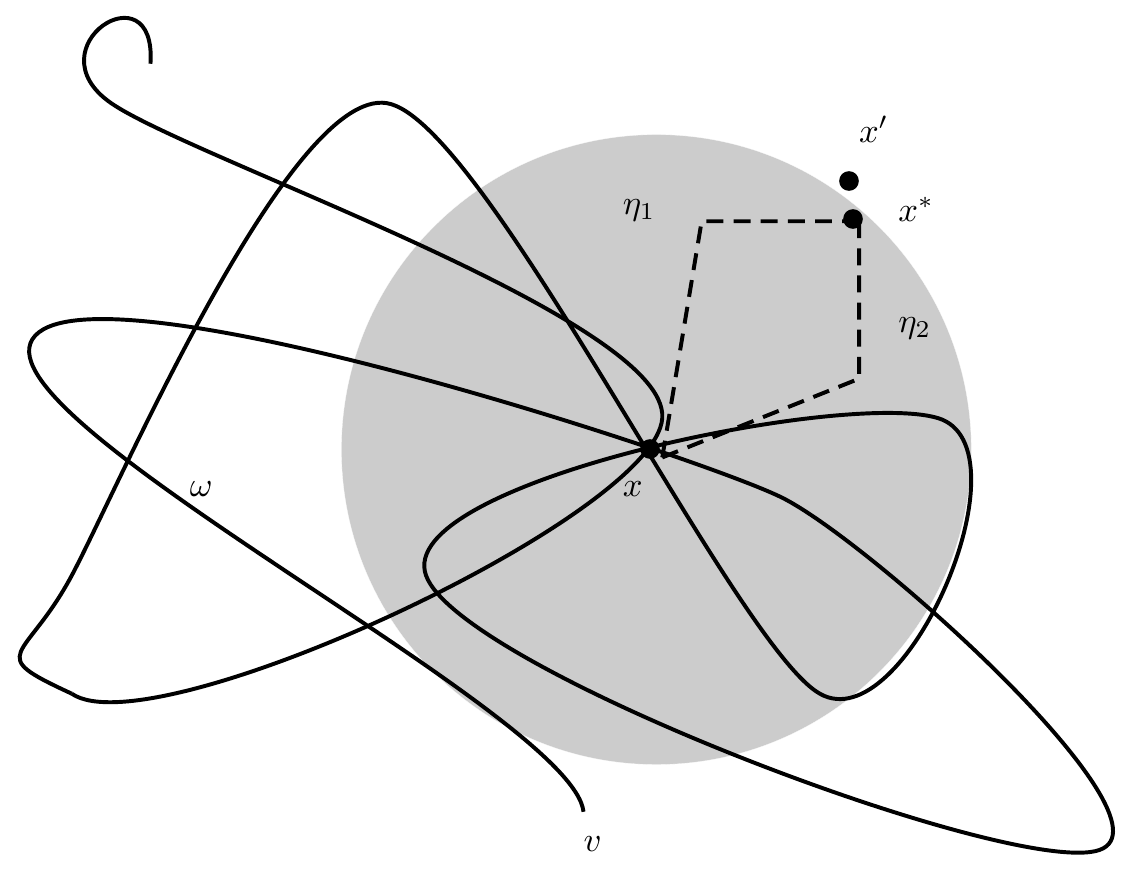}
	\begin{caption}
{The gray ball is $B_b(x)$ which is open. The black curve is the random walk path $\omega$, which visits $x$ three times in this picture. }
\label{fig:Wx}
\end{caption}
\end{figure}where we have the factor $\sum_{i\geq 0}^{k_n} \11_{x}(\omega_i)$ because we can insert an $\pi^1 \oplus [x^*,x',x^*]\oplus \pi^2$ at each visit to $x$ along the random walk path (see Figure \ref{fig:Wx}).
In light of Lemma \ref{probgv}, we choose $\alpha, \kappa$ so that $\P(G_v) \geq 1 - n^{-2d-1}$. And we suppose $G_v$ occurs, hence $b \leq \alpha (\log n)^{1/d}$. By \cite[Lemma 6.3.7]{Lawler10}, we get that 
\begin{align*}
\Pr^v(\K_{\mathring{B}_b(x)} (x,x^*))=\Pr^v(\K_{\mathring{B}_b(x)} (x^*,x))& \geq \begin{cases}
    C(\log n)^{-1/2}(\log \log n)^{-1} & \mbox{if } d = 2;\\
    C(\log n)^{1/d - 1} & \mbox{if } d \geq 3\,.
  \end{cases}
  \end{align*}
where $C>0$ only depends on $\pe$ and $d$. In fact, \cite[Lemma 6.3.7]{Lawler10} gives an estimate on the harmonic measure when the random walk exits a discrete $\ell^2$-ball. Combined with the observation that such harmonic measure is unchanged conditioned on the random walk not returning to the starting point, this yields the preceding inequality. Thus, we have
$$  \Pr(\K_{\mathring{B}_b(x)} (x,x^*)) \geq 2^{-1}Ck_n^{-1/2}\log n\,.$$
Therefore, (recall that $\ell_x = \Ex [\sum_{t=0}^{k_n} \11_{ \{S_t = x\} } \mid \tau > k_n]$) we obtain that
\begin{equation}\label{eq-W-ell}
\Pr(W_x) \geq (2d)^{-3} C^2 k_n^{-1}(\log n)^2 \ell_xX_v\,.
\end{equation}
The preceding inequality can be immediately translated into a bound on the survival probability after removing the obstacle at $x'$.

\noindent {\bf Step 2.}  Recall Proposition \ref{cgoodtime} and recall that $X_v = \Pr^v(\tau > k_n)$.
We see that on the event $\{X_v \geq \beta_\chi\}$ 
$$\sum_{x \in K_{k_n}(v):  x \mbox{ is $c$-good}} \ell_x \geq k_n/4\,.$$
At the same time, on the event $G_v \cap \{X_v \geq \beta_\chi\}$ there are no more than $\kappa\log n$ $c$-good points in $K_{k_n}(v)$. Altogether, it follows that there exists a $c$-good point $x$ in $K_{k_n}(v)$ such that $\ell_x\geq k_n/(4 \kappa\log n)$. Combined with \eqref{eq-W-ell}, it yields that there exists $x' \in K_{k_n}(v)$ such that for $c_2 = c_2(\pe, d)>0$ 
\begin{equation}\label{eq-tau-prime}
\Pr^v(\tau' >k_n) \geq  c_2 X_v\log n\,, \mbox{ where } \tau' = \tau_{\ob\setminus\{x'\}} =  \inf\{t: S_t\in \ob\setminus \{x'\}\}\,.
\end{equation}

\noindent {\bf Step 3.} Now, for $\beta \geq \beta_\chi$, let $E_v$ be the event that $\{$there exists a closed site $x' \in K_{k_n}(v)$ such that 
$\Pr^v(\tau' > k_n) \geq c_2 \beta \log n \}$. We have shown that $G_v \cap \{X_v \geq \beta\} \subset E_v$, as in \eqref{eq-tau-prime}. Furthermore, note that  $x' \in K_{k_n}(v)$ provided $W_x \not= \varnothing$. 
Thus, all environments where $E_v$ occurs can be obtained by closing one open site in $K_{k_n}(v)$ in one of the environments where $X_v \geq c_2 \beta \log n$.  Write $\ob^n = \ob \cap B_{k_n}(v)$ --- we restrict the consideration of $\ob$ into a finite set $B_{k_n}(v)$ so that each realization of $\ob^n$ has positive probability. Let $A, B$ be collections of subsets of $B_{k_n}(v)$ such that $E_v = \{ \ob^n \in A\}$, and $\{X_v \geq c_2 \beta \log n\} = \{\ob^n \in B\}$. Let  $$\mathcal{S} =\{(x,O) \in K_{k_n}(v) \times B:x \not\in O\}.$$
We define the map $\varphi\colon \mathcal{S} \rightarrow 2^{B_n(v)}$  by $$\varphi(x,O)= O\cup \{x\}. $$ Then for any $(x,O) \in \mathcal{S}$, $\P(\ob^n = \varphi(x,O)) = \frac{1 -\pe}{\pe}\P(\ob^n = O)$. Thus,
$$\P(\ob^n \in \varphi(\mathcal{S})) \leq \frac{1 -\pe}{\pe} \sum_{(x,O) \in \mathcal{S}}\P(\ob^n = O) \leq \frac{1 -\pe}{\pe}|K_{k_n}(v)|\P(\ob^n \in B).$$
By definition, we have $A \subset\varphi(\mathcal S)$. Hence,
$$\P(E_v) =  \P\{ \ob^n \in A\} \leq  (2 k_n +1)^d \frac{1-\pe}{\pe} \P\{ X_v \geq c_2 \beta \log n \}\,. $$
Therefore,
\begin{align*}
  \P(X_v \geq \beta)& \leq \P(G_v \cap \{X_v \geq \beta\}) + \P(G^c_v)\\
                      & \leq \P(E_v) + \P(G^c_v)\\
                      & \leq (2 k_n +1)^d \frac{1-\pe}{\pe} \P\{ X_v \geq c_2 \beta \log n \} +  n^{-2d-1}\\
                      & \leq c_1  k_n^d\P\{ X_v \geq c_2 \beta \log n \} + n^{-2d-1} \,,
\end{align*}
where $c_1>0$ is a constant depending only on $(d, \pe)$.
\end{proof}

\section{Candidate regions for localization} \label{sec:candidate-regions}

Recalling our discussion on proof strategy in Section~\ref{sec:proof-strategy}, in order to show localization it is important to show that all except poly-logarithmic many small regions will be suboptimal compared to some ``best'' small region. Here, to measure the level of ``goodness'' for small regions, we will use principal eigenvalues, which in turn is closely related to survival probabilities for random walk as we show in Lemma~\ref{def:CRlambda}. Thus, it is natural to introduce the following quantiles which measure goodness from the perspective of survival probabilities (in $k_n$ steps):
\begin{equation}
\label{eq:palpha-def}
\begin{split}
	  p_0 &:= \sup\{\beta \geq 0, \P(X_v \geq \beta) \geq n^{-d} k_n^{2d}\log n \},\\
  p_\alpha &:= p_0/(c_2\log n)^\alpha \quad \mbox{for } \alpha \geq 0 ,
\end{split}
\end{equation}
where $c_2$ is chosen such that \eqref{eq:tail} holds. Denote 
\begin{equation}
\label{eq:U-def}
 	\calU_\alpha := \{v\in \Z^d: X_v \geq p_\alpha \}\,.
 \end{equation} Thus, we have that $\calU_0 = \{v\in \Z^d: X_v \geq p_0 \}.$ Heuristically, the hope is that if $\alpha$ is large enough, all regions outside $\calU_\alpha$ will be suboptimal compared to $\calU_0$. In order to make this intuition rigorous, it turns out more convenient to consider principal eigenvalues for small regions. To this end, we introduce the following definition.
 \begin{defn}
\label{def:CRlambda}
	For any site $v \in \Z^d$, we let $\mathcal C_R(v)$  be the connected component in $B_{R}(v) \backslash \ob$ that contains $v$ for $R = k_n (\log n)^2$, and let $\lambda_v$ be the principal eigenvalue of $P|_{\mathcal C_R(v)}$ where $P|_{\mathcal C_R(v)}$ is the transition matrix of simple random walk on $\Z^d$ restricted to $C_R(v)$.
\end{defn}
 In the next lemma, (as announced earlier) we will relate the survival probability $X_v$ to the principal eigenvalue $\lambda_v$, and thus relating the survival probability in $k_n$ steps (i.e., $X_v$) to the survival probability to arbitrary number of steps. 
\begin{lemma}
\label{eigv}
For any $m \geq 1$,
	\begin{equation}
	\label{eq:eigv-1}
		\lambda_v^m\leq \max_x \Pr^x(\xi_{\mathcal C_R(v)} > m) \leq (2R)^{d/2}\lambda_v^{m}\,.
	\end{equation}
In particular,
\begin{equation}
	\label{eq:eigv-2}
	\big(X_v/(2R)^{d/2}\big)^{1/k_n} \leq \lambda_v \leq   \max_{x \in \mathcal C_R(v)} (X_x)^{1/k_n}.
\end{equation}
\end{lemma}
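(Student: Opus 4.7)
The plan is to treat $Q := P|_{\mathcal C_R(v)}$ as a symmetric substochastic matrix indexed by the finite set $\mathcal C_R(v)$, so that the survival probability in the region has the clean matrix representation $\Pr^x(\xi_{\mathcal C_R(v)} > m) = (Q^m \mathbbm 1)(x)$, and then to read off both inequalities from the spectral theorem.

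First I would prove the double inequality \eqref{eq:eigv-1}. For the lower bound, let $\phi \geq 0$ be the Perron eigenvector of $Q$ associated with $\lambda_v$, and pick $x_0 \in \mathcal C_R(v)$ with $\phi(x_0) = \|\phi\|_\infty$. Since $\phi \leq \|\phi\|_\infty \mathbbm 1$ entrywise and $Q$ has nonnegative entries, we get $\lambda_v^m \phi(x_0) = (Q^m \phi)(x_0) \leq \|\phi\|_\infty (Q^m \mathbbm 1)(x_0)$, so $\lambda_v^m \leq \max_x \Pr^x(\xi_{\mathcal C_R(v)} > m)$. For the upper bound, use the $\ell^\infty \to \ell^2$ bound $\|f\|_\infty \leq \|f\|_2$ and the self-adjoint operator norm $\|Q^m\|_{2\to 2} = \lambda_v^m$:
\[
\max_x (Q^m \mathbbm 1)(x) = \|Q^m \mathbbm 1\|_\infty \leq \|Q^m \mathbbm 1\|_2 \leq \lambda_v^m \|\mathbbm 1\|_2 \leq \lambda_v^m |\mathcal C_R(v)|^{1/2},
\]
and bound $|\mathcal C_R(v)| \leq |B_R(v)| \leq (2R)^d$.

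Next I would derive \eqref{eq:eigv-2} from \eqref{eq:eigv-1} applied at $m = k_n$. The key observation is that in $k_n$ steps the walk can travel at most $\ell^2$-distance $k_n$, which is much smaller than $R = k_n (\log n)^2$, so starting from $v$ the walk automatically stays in $B_R(v)$. Hence if the walk has not yet hit an obstacle it has stayed in the open connected component of $v$ inside $B_R(v)$, namely $\mathcal C_R(v)$, giving $X_v = \Pr^v(\tau > k_n) = \Pr^v(\xi_{\mathcal C_R(v)} > k_n)$. The upper half of \eqref{eq:eigv-1} with $m=k_n$ and the evaluation point $x = v$ then yields $X_v \leq (2R)^{d/2} \lambda_v^{k_n}$, which is the claimed lower bound on $\lambda_v$. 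For the upper bound on $\lambda_v$, the lower half of \eqref{eq:eigv-1} gives $\lambda_v^{k_n} \leq \max_{x \in \mathcal C_R(v)} \Pr^x(\xi_{\mathcal C_R(v)} > k_n)$, and since $\mathcal C_R(v)$ is obstacle-free we have $\Pr^x(\xi_{\mathcal C_R(v)} > k_n) \leq \Pr^x(\tau > k_n) = X_x$ for each such $x$, completing the proof.

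There is no real obstacle here; this is essentially a bookkeeping lemma that packages the standard variational/Perron bounds for substochastic operators. The only point that requires a moment's care is the identification $X_v = \Pr^v(\xi_{\mathcal C_R(v)} > k_n)$, which relies on the choice $R \geq k_n$ so that the time-$k_n$ walk cannot escape $B_R(v)$.
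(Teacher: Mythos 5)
Your proposal is correct and follows essentially the same route as the paper: represent $\Pr^x(\xi_{\mathcal C_R(v)}>m)$ as $(P|_{\mathcal C_R(v)})^m$ applied to $\mathbf 1$, get the upper bound from Cauchy--Schwarz and the spectral norm with $|\mathcal C_R(v)|\le (2R)^d$, and get the lower bound from the nonnegative Perron eigenvector (the paper averages against it rather than evaluating at its maximizer, an immaterial variation), then deduce \eqref{eq:eigv-2} at $m=k_n$ exactly as intended, using $k_n\ll R$ to identify $X_v$ with $\Pr^v(\xi_{\mathcal C_R(v)}>k_n)$ and the obstacle-freeness of $\mathcal C_R(v)$ for the other direction.
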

We set $\alo = 3d$ and $\alh = 4d$. By definition there is a clear separation on the level of goodness (in terms of survival probabilities in $k_n$ steps) for typical regions in $\calU_{0}$, $\calU_{\alt}$ and $\calU_{\alh}$ where $\calU_{0}$ contains the ``most desirable'' regions. The level $p_{\alpha_1}$ will be the threshold of candidate regions, while the spacing between $p_0$ and $p_{\alpha_1}$ is used in Lemma~ \ref{firststep} and the spacing between $p_{\alpha_2}$ and $p_{\alpha_1}$ is used in Lemma~\ref{Est-eigv}. By Lemma~\ref{eigv}, such separation can be translated to that in terms of principal eigenvalues (which then controls survival probabilities for arbitrary number of steps).
This motivates the following definition:
\begin{align}\label{eq-def-D}
\begin{split}
\tD_{\lambda} := \{v\in \Z^d: \lambda_v > \lambda  \} \text{ and }\locD& := \{v\in \Z^d: \lambda_v \geq p^{1/k_n}_{\alt}  \}\,.
 \end{split}
 \end{align}
 With preceding definitions, $\locD$ represents candidate regions for localization: indeed, we will show in Section \ref{sec:localization} that random walk will eventually be localized in neighborhoods that are close to $\locD$ (see
\eqref{eq:def-dn} for a formal definition for the union of islands for localization). The remaining section is devoted to proving a number of structural properties for $\locD$ (via structure properties of $\calU_\cdot$), as listed below.
\begin{itemize}
\item We prove Lemma~\ref{goodsiteprob} by a crucial application of Proposition~\ref{tail}, which in turn guarantees that the number of islands in $\calU_{\alpha}$ are at most poly-logarithmic in $n$ --- this is  important for bounding $|D_n|$. 
\item We show in Lemma~\ref{faraway1} that vertices in $\calU_\alpha$ is either close or far away from each other --- this implies that it is costly for the random walk to travel from one good region to another (this is important in the proof of Lemma~\ref{Est-eigv} later).
\item We show in Lemmas~\ref{richsite} and \ref{chemdis} (whose proof uses results in percolation theory) that there exists vertices in $\calU_0$ which are connected to the origin by open paths with lengths which are linear in their Euclidean distances from the origin --- this implies a lower bound on $\Pr(\tau >n)$ by letting the random walk travel to one vertex in $\calU_0$ quickly and stays around it afterwards (see \eqref{eq:lowerbd}).
\item We use Lemma~\ref{eigv} to deduce structural properties on $\tD_\cdot$ from $\calU_\cdot$ --- these are incorporated in Corollary~\ref{lqbzlands} and Lemma~\ref{Emap}.
\end{itemize}

The proofs of Lemma~\ref{eigv}  and the following four lemmas are postponed to Section~\ref{sec:proofs}.
 
\begin{lemma}
\label{goodsiteprob}
We have 
\begin{align*}
n^{-d} k_n^{2d}\log n& \leq \P(X_v \geq p_0) \leq n^{-d}k_n^{4d}\,,\\
\mbox{ and } \quad \P(X_v \geq p_\alpha) &\leq n^{-d}k_n^{(\alpha+4) d}\,.
\end{align*}
\end{lemma}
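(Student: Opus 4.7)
The plan is to prove the lower bound on $\P(X_v \geq p_0)$ directly from the supremum definition combined with left-continuity, and to obtain the upper bounds by iterating Proposition~\ref{tail}. The main (modest) obstacle is to first verify the hypothesis $\beta \geq \beta_\chi$ of Proposition~\ref{tail} for each threshold $p_\alpha$ that will be plugged in; everything else is elementary bookkeeping.

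For the lower bound, I will use that $\beta \mapsto \P(X_v \geq \beta)$ is non-increasing and left-continuous (since $\{X_v \geq p_0\} = \bigcap_m \{X_v \geq p_0 - 1/m\}$). By the supremum definition of $p_0$, for every $\epsilon > 0$ there exists $\beta \in (p_0 - \epsilon, p_0]$ with $\P(X_v \geq \beta) \geq n^{-d} k_n^{2d} \log n$, so $\P(X_v \geq p_0 - \epsilon) \geq n^{-d} k_n^{2d} \log n$ by monotonicity; sending $\epsilon \downarrow 0$ gives the claim. To then handle the hypothesis $p_\alpha \geq \beta_\chi$, Lemma~\ref{lem-lambdac} yields $\P(X_v \geq \beta_\chi) \gtrsim n^{-d+1}$, which comfortably exceeds $n^{-d} k_n^{2d} \log n$ for large $n$; hence $\beta_\chi$ lies in the defining set of $p_0$ and $p_0 \geq \beta_\chi$. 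Rewriting Proposition~\ref{tail} as
\[
\P(X_v \geq c_2 \beta \log n) \;\geq\; \frac{\P(X_v \geq \beta) - n^{-(2d+1)}}{c_1 k_n^d}
\]
and iterating from $\beta = \beta_\chi$ gives $\P(X_v \geq (c_2 \log n)^m \beta_\chi) \gtrsim n^{-d+1}/k_n^{md}$, valid as long as the additive error $n^{-(2d+1)}$ remains negligible, i.e., as long as $m \lesssim \log n / \log k_n$. Taking any constant $m \geq \alpha_2 + 1$ works for large $n$, and the resulting probability exceeds $n^{-d} k_n^{2d} \log n$; hence $p_0 \geq (c_2 \log n)^{\alpha_2 + 1} \beta_\chi$, and in particular $p_\alpha \geq \beta_\chi$ for every $\alpha \leq \alpha_2$.

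With this in hand, the upper bound is proved by induction on integer $\alpha \geq 0$. For the base case $\alpha = 0$, apply Proposition~\ref{tail} with $\beta = p_0$: since $c_2 \log n > 1$ for large $n$ we have $c_2 p_0 \log n > p_0$, and the supremum definition of $p_0$ then forces $\P(X_v \geq c_2 p_0 \log n) \leq n^{-d} k_n^{2d} \log n$. Substituting gives $\P(X_v \geq p_0) \leq c_1 k_n^{3d} (\log n)\, n^{-d} + n^{-(2d+1)} \leq n^{-d} k_n^{4d}$ for $n$ large (absorbing the $c_1 \log n$ factor into $k_n^d$ using that $k_n$ grows at least polynomially in $\log n$). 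For the inductive step, exploit the identity $c_2 p_{\alpha+1} \log n = p_\alpha$: Proposition~\ref{tail} with $\beta = p_{\alpha + 1}$ together with the inductive hypothesis gives $\P(X_v \geq p_{\alpha+1}) \leq c_1 k_n^d \cdot n^{-d} k_n^{(\alpha + 4)d} + n^{-(2d+1)} \leq n^{-d} k_n^{(\alpha + 5)d}$, completing the induction.
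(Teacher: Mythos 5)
Your overall architecture is the same as the paper's: the lower bound at $p_0$ comes from the supremum definition plus left-continuity, and the upper bounds come from feeding Proposition~\ref{tail} the identity $c_2 p_{\alpha+1}\log n = p_\alpha$. One sub-step is genuinely different and is fine: to verify the hypothesis $p_\alpha \geq \beta_\chi$ you iterate \eqref{eq:tail} \emph{upward} from $\beta_\chi$, losing a factor $c_1 k_n^d$ per step, and observe that after a bounded number of steps the probability still exceeds $n^{-d}k_n^{2d}\log n$ (for the lemma as stated, with arbitrary fixed $\alpha$, one simply takes $m\geq \alpha+1$ rather than $m\geq \alpha_2+1$). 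The paper instead applies Lemma~\ref{lem-lambdac} with a smaller constant $\chi$ so that $\P(X_v\geq n\beta_\chi)\gtrsim n^{-d+1}$, which gives $p_0\geq n\beta_\chi$ and hence $p_\alpha\geq\beta_\chi$ in one stroke; your route is heavier but valid, since for fixed $m$ the additive error $n^{-(2d+1)}$ indeed stays negligible against $n^{-d+1-o(1)}$.

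The genuine problem is the inductive step: it does not close as written. Once you round the hypothesis to $\P(X_v\geq p_\alpha)\leq n^{-d}k_n^{(\alpha+4)d}$, the step produces $c_1 k_n^d\cdot n^{-d}k_n^{(\alpha+4)d}+n^{-(2d+1)}$, and the asserted bound $\leq n^{-d}k_n^{(\alpha+5)d}$ forces $c_1\leq 1$ (up to the negligible additive term). Proposition~\ref{tail} only provides $c_1$ as a constant depending on $(d,\pe,\chi)$, and its proof gives $c_1$ of order $2^d(1-\pe)/\pe$, typically larger than $1$, so for such $c_1$ the displayed inequality is false. The slack you exploited in the base case (absorbing $c_1\log n$ into one factor of $k_n^d$) is thrown away by the rounding and is not available again at the next step. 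The repair is routine and stays within your framework: either carry a constant prefactor through the induction, e.g.\ prove $\P(X_v\geq p_\alpha)\leq (2c_1)^{\alpha}\bigl(c_1 k_n^{(\alpha+3)d}(\log n)\, n^{-d}+n^{-(2d+1)}\bigr)$ and absorb $(2c_1)^{\alpha}c_1\log n$ into a single factor $k_n^d$ only at the very end, or do as the paper does: iterate the multiplicative inequality $\P(X_v\geq\beta)\leq 2c_1k_n^d\,\P(X_v\geq c_2\beta\log n)$ for $\beta=p_\alpha,\dots,p_1$ all the way down to $p_0$, and only then insert the un-rounded estimate at $p_0$, where the $k_n^d/\log n$ slack swallows all accumulated constants.
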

\begin{lemma}
\label{faraway1}
For any $\alpha \in \N$, with $\P$-probability tending to one there exist no $ u, v \in \calU_{\alpha} \cap B_{2n}(0)$ such that $2 k_n \leq |u - v| \leq n  k_n^{-2(\alpha+5)}$.
\end{lemma}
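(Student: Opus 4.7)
The plan is a straightforward second moment / union bound argument whose essential input is a spatial independence observation combined with the quantitative tail bound from Lemma~\ref{goodsiteprob}.

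First I would exploit the deterministic fact that a simple random walk started at $v$ stays in $B_{k_n}(v)$ throughout its first $k_n$ steps (each step moves by a unit coordinate vector, so $|S_t - v|_2 \leq |S_t - v|_1 \leq k_n$). Consequently $X_v = \Pr^v(\tau > k_n)$ is measurable with respect to $\ob \cap B_{k_n}(v)$. Therefore, whenever $|u - v| \geq 2 k_n$, the balls $B_{k_n}(u)$ and $B_{k_n}(v)$ are disjoint, and the random variables $X_u$ and $X_v$ are $\P$-independent. Combined with Lemma~\ref{goodsiteprob}, this gives, for any such pair,
\begin{equation*}
\P\bigl(X_u \geq p_\alpha,\ X_v \geq p_\alpha\bigr) = \P(X_v \geq p_\alpha)^2 \leq n^{-2d} k_n^{2(\alpha + 4)d}\,.
\end{equation*}

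Second, I would count the pairs $(u,v) \in B_{2n}(0)^2$ with $2k_n \leq |u-v| \leq n k_n^{-2(\alpha+5)}$. A crude estimate gives at most $|B_{2n}(0)|\cdot |B_{n k_n^{-2(\alpha+5)}}(0)| \lesssim n^{2d} k_n^{-2d(\alpha + 5)}$ such pairs.

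Finally, a union bound combined with the displayed independence estimate yields
\begin{equation*}
\P\bigl(\exists\, u,v\in \calU_\alpha\cap B_{2n}(0),\ 2k_n \leq |u-v| \leq n k_n^{-2(\alpha+5)}\bigr) \lesssim n^{2d} k_n^{-2d(\alpha+5)} \cdot n^{-2d} k_n^{2d(\alpha + 4)} = k_n^{-2d}\,,
\end{equation*}
which tends to zero as $n\to\infty$, because $k_n\to\infty$ by \eqref{eq-def-k-n}. This gives the claim. There is no genuine obstacle here beyond being careful that the threshold $2k_n$ is precisely what is needed to make the supports of $X_u$ and $X_v$ disjoint, and that the gap between $k_n^{2d(\alpha+5)}$ in the distance cutoff and $k_n^{2d(\alpha+4)}$ in the tail bound leaves a factor of $k_n^{-2d}$ to absorb any polynomial-in-$k_n$ losses from the pair count.
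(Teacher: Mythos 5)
Your proposal is correct and follows essentially the same route as the paper: independence of $\{X_u \geq p_\alpha\}$ and $\{X_v \geq p_\alpha\}$ for $|u-v| \geq 2k_n$ (since the walk cannot leave $B_{k_n}(v)$ in $k_n$ steps), the tail bound of Lemma~\ref{goodsiteprob}, and a union bound over the $\lesssim n^{2d}k_n^{-2d(\alpha+5)}$ admissible pairs, leaving a decaying factor $k_n^{-2d}$.
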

\begin{lemma}
\label{richsite}
Conditioned on the origin being in an infinite cluster, with $\P$-probability approaching one 
\begin{equation}
	\calU_0 \cap \calC(0) \cap B_{n/k_n}(0) \not = \varnothing\,.
\end{equation}
\end{lemma}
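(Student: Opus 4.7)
The plan is to prove the unconditional estimate $\P\big(\calU_0 \cap B_{n/k_n}(0) \cap \{v : |\calC(v)| = \infty\} \neq \varnothing\big) \to 1$ and then transfer it to the conditional statement using the almost-sure uniqueness of the infinite open cluster for $\pe > p_c(\Z^d)$ (Burton--Keane): on $\{|\calC(0)| = \infty\}$ the set $\{v : |\calC(v)| = \infty\}$ coincides with $\calC(0)$, so the unconditional $1 - o(1)$ estimate, together with the fact that $\P(|\calC(0)| = \infty) > 0$ is fixed in $n$, yields the lemma.

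For the unconditional estimate, I would fix a sparse grid $\Lambda \subset B_{n/k_n}(0)$ with spacing $2k_n + 1$, so that $|\Lambda| \asymp n^d/k_n^{2d}$ and the balls $\{B_{k_n}(v)\}_{v \in \Lambda}$ are pairwise disjoint. For each $v \in \Lambda$ I would introduce the local event $L_v = \{v \text{ is connected to } \partial_i B_{k_n}(v) \text{ by an open path inside } B_{k_n}(v)\}$, which is measurable with respect to the environment in $B_{k_n}(v)$. Any infinite open path from $v$ must visit $\partial_i B_{k_n}(v)$, so $\{|\calC(v)|=\infty\} \subseteq L_v$, and in particular $\P(L_v) \geq \P(|\calC(0)|=\infty) > 0$ by translation invariance. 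Both $L_v$ and $\{X_v \geq p_0\}$ are increasing events in the open-site configuration, so FKG together with the definition of $p_0$ in \eqref{eq:palpha-def} gives
\[
\P\big(L_v \cap \{X_v \geq p_0\}\big) \;\geq\; \P(L_v) \cdot \P(X_v \geq p_0) \;\gtrsim\; n^{-d} k_n^{2d} \log n.
\]
These joint events are mutually independent across $v \in \Lambda$ (they depend on the pairwise disjoint environments $B_{k_n}(v)$) and their expectations sum to $\gtrsim \log n$, so a Chernoff bound for independent Bernoullis yields, with $\P$-probability at least $1 - n^{-c}$, at least $c' \log n$ vertices $v \in \Lambda$ simultaneously satisfying $X_v \geq p_0$ and $L_v$.

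The final step upgrades $L_v$ to $\{|\calC(v)| = \infty\}$ using the Grimmett--Marstrand theorem on the supercritical phase: there is $c = c(d, \pe) > 0$ such that $\P\big(L_v, |\calC(v)| < \infty\big) \leq e^{-c k_n}$, i.e.\ finite open clusters of radius at least $k_n$ are exponentially rare. A union bound then gives
\[
\P\big(\exists v \in \Lambda: L_v \text{ holds but } |\calC(v)| < \infty\big) \;\leq\; |\Lambda| \cdot e^{-c k_n} \;=\; o(1),
\]
since $k_n$ is polylogarithmic while $|\Lambda|$ is polynomial in $n$. Combining, with $\P$-probability $1 - o(1)$ there exists $v \in \Lambda \cap \calU_0$ with $|\calC(v)| = \infty$, proving the unconditional estimate. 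The main technical input is the Grimmett--Marstrand exponential decay of finite-cluster radii, which is what allows us to replace the global event $\{|\calC(v)|=\infty\}$ by the local event $L_v$ up to exponentially small error; everything else is a routine combination of FKG, independence on a sparse grid, and Chernoff concentration.
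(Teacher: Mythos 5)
Your proposal is correct and takes essentially the same route as the paper: the paper also pairs a local connectivity event (being ``reachable'', i.e.\ the open component of $v$ inside $B_{k_n}(v)$ having size at least $k_n$) with $\{X_v\ge p_0\}$ via FKG, and then exploits independence over a grid of spacing $2k_n+1$ in $B_{n/k_n}(0)$ to find such a vertex with probability $1-n^{-c}$. The only minor difference is the final upgrade to membership in $\calC(0)$: the paper uses its own estimate \eqref{eq:finite cluster} from Lemma~\ref{chemdis} (finite clusters in $B_{2n}(0)$ have size at most $(\log n)^3<k_n$), whereas you invoke the exponential decay of the truncated cluster radius together with uniqueness of the infinite cluster --- both are valid.
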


\begin{lemma}
\label{chemdis}
Let $D(u, v)$ be defined as in \eqref{eq-def-chemical}. For $\pe > p_c(\mathbb Z^d)$, there exists a constant $\rho>0$ which only depends on $(d,\pe)$ such that the following holds with $\P$-probability tending to one. For all $u, v \in B_{2n}(0)$
  \begin{equation}
  \label{eq:finite cluster}
  	either~\calC(u) = \calC(0) \ or \ |\calC(u)| \leq (\log n)^3\,,
  \end{equation}
 \begin{equation}
 \label{eq:chemdis}
   D(u,v)\11_{\{u \leftrightarrow v \}} \leq \rho\max(| u - v|, (\log n)^3)\,.
 \end{equation}
\end{lemma}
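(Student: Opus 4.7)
Both parts are standard consequences of supercritical site percolation on $\mathbb Z^d$ applied at the scale $(\log n)^3$, together with straightforward union bounds.

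For \eqref{eq:finite cluster}, the plan is to invoke the standard exponential decay of finite-cluster sizes in the supercritical regime: for $\pe > p_c(\mathbb Z^d)$ there exist $C, c > 0$ with
\[
  \P\bigl(N \leq |\calC(u)| < \infty\bigr) \leq C \exp\bigl(-c N^{(d-1)/d}\bigr),
\]
where the case $d = 2$ goes back to Kesten, while $d \geq 3$ follows from Grimmett-Marstrand renormalization combined with Wulff/surface-tension estimates of Cerf and Pisztora (any bound of the form $\exp(-c N^{\theta})$ with $\theta > 1/3$ would already suffice for us). Plugging in $N = (\log n)^3$ and taking a union bound over $u \in B_{2n}(0)$, the failure probability is at most $C n^d \exp\bigl(-c (\log n)^{3(d-1)/d}\bigr) = o(1)$, since $3(d-1)/d \geq 3/2 > 1$ for all $d \geq 2$.

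For \eqref{eq:chemdis}, the main input is the Antal-Pisztora chemical-distance bound: for $\pe > p_c(\mathbb Z^d)$ there exist $\rho_0, c > 0$ such that
\[
  \P\bigl(x \leftrightarrow y,\ D(x, y) > \rho_0 |x - y|\bigr) \leq e^{-c|x - y|} \quad \text{for all } x, y \in \mathbb Z^d.
\]
A union bound over all pairs $u, v \in B_{2n}(0)$ with $|u - v| \geq (\log n)^3$ gives total failure probability at most $C n^{2d} e^{-c (\log n)^3} = o(1)$, so only connected pairs with $|u - v| < (\log n)^3$ remain. For such a close, connected pair I would split on the cluster of $u$: if $\calC(u)$ is finite, then \eqref{eq:finite cluster} immediately gives $D(u, v) \leq |\calC(u)| \leq (\log n)^3$; if instead $u, v \in \calC(0)$, I run a short detour, choosing any $w \in \calC(0)$ with $|u - w| \in [2(\log n)^3,\, 3(\log n)^3]$, so that $|v - w| \in [(\log n)^3,\, 4(\log n)^3]$ as well. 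Both pairs $(u, w)$ and $(v, w)$ then fall in the Antal-Pisztora range, giving $D(u, v) \leq D(u, w) + D(v, w) \leq 7 \rho_0 (\log n)^3$. The simultaneous existence of such a $w$ for every $u \in B_{2n}(0)$ holds with high probability because, in the supercritical regime, the probability that a ball of radius $R$ meets no vertex of $\calC(0)$ decays like $\exp(-c R^{d-1})$ (a standard boundary/renormalization argument); with $R = (\log n)^3$ this easily beats the polynomial factor $n^d$. Setting $\rho = 7 \rho_0$ (enlarged if necessary to also absorb the finite-cluster case) completes the argument.

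The main technical point is purely bookkeeping: one needs the threshold $(\log n)^3$ large enough that the three exponentially small events (finite-cluster tail, Antal-Pisztora failure at long range, and local density of the infinite cluster) each dominate the polynomial prefactors $n^d$ and $n^{2d}$ from the union bounds; this is precisely why a power $(\log n)^3$ rather than a smaller power appears in the statement. No random-walk considerations enter this lemma; the content is entirely about the percolation environment.
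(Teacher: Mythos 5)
Your proposal is correct and follows essentially the same route as the paper: the same two inputs (an exponential tail for large finite clusters and the Antal--Pisztora chemical-distance estimate), the same split of pairs at distance $(\log n)^3$, and the same detour through an intermediate vertex $w$ of the infinite cluster for close connected pairs. The only differences are cosmetic: the paper produces $w$ deterministically by moving along the cluster from $u$ until $\min(|w-u|,|w-v|)$ first falls in $[(\log n)^3,(\log n)^3+2]$, so it needs no extra density estimate for $\calC(0)$ in balls of radius $(\log n)^3$, and since your $w$ may lie slightly outside $B_{2n}(0)$ the Antal--Pisztora union bound should be taken over the slightly larger box $B_{3n}(0)$, exactly as the paper does.
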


\begin{cor} We have that
\label{lqbzlands}
\begin{enumerate}[(1)]
	\item With $\P$-probability tending to one, for any $v \in B_{2n}(0)\cap(\calU_{\alh} \cup \D_{p_{\alh}^{1/k_n}})$, $$\big(B_{nk_n^{-14d}}(v) \setminus B_{3R}(v) \big)\cap \big(\calU_{\alh} \cup \D_{p_{\alh}^{1/k_n}}\big) = \varnothing\,.$$
	\item $ k_n^{2d} n^{-d} \leq \P(v \in \locD) \leq k_n^{\alpha + 6} n^{-d}$.
	\item $p_{\alh}^{1/k_n} \geq 1 - {\chi/(\log n)^{2/d}}$ for some constant $\chi$ depending only on $(d,\pe)$.
\end{enumerate}
\end{cor}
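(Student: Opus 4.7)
The plan is to handle each of the three statements in turn; all three reduce quickly to Lemmas~\ref{goodsiteprob}, \ref{faraway1}, and \ref{lem-lambdac} via the survival-probability/eigenvalue correspondence in Lemma~\ref{eigv}.

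For part~(1), the first step is to absorb $\D_{p_{\alpha_2}^{1/k_n}}$ into a set close to $\calU_{\alpha_2}$. Indeed, by the upper bound in \eqref{eq:eigv-2}, any $v\in\D_{p_{\alpha_2}^{1/k_n}}$ admits some $v'\in B_R(v)\cap\calU_{\alpha_2}$. So if there existed $v_1,v_2\in B_{2n}(0)\cap(\calU_{\alpha_2}\cup\D_{p_{\alpha_2}^{1/k_n}})$ with $3R\leq|v_1-v_2|\leq nk_n^{-14d}$, one would obtain $v_1',v_2'\in\calU_{\alpha_2}$ (each within $R$ of the original point) satisfying $R\leq|v_1'-v_2'|\leq nk_n^{-14d}+2R$. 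Since $R=k_n(\log n)^2\geq 2k_n$ and since $14d\geq 2(\alpha_2+5)=8d+10$ for every $d\geq 2$ (sharpest at $d=2$, where $28\geq 26$), for large $n$ this pair of points violates Lemma~\ref{faraway1} (applied on the slightly enlarged ball $B_{3n}(0)$, whose proof is unchanged).

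For part~(2), the upper bound follows from the observation that $v\in\locD$ forces some $x\in B_R(v)$ with $X_x\geq p_{\alpha_1}$ by \eqref{eq:eigv-2}. A union bound over $B_R(v)$ combined with Lemma~\ref{goodsiteprob} gives $\P(v\in\locD)\leq (2R+1)^d\cdot n^{-d}k_n^{(\alpha_1+4)d}$; substituting $R=k_n(\log n)^2$ and absorbing the $(\log n)^{2d}$ factor into a small increase of the power of $k_n$ yields the claimed polynomial-in-$k_n$ bound. For the lower bound, if $v\in\calU_0$ then \eqref{eq:eigv-2} gives $\lambda_v\geq(p_0/(2R)^{d/2})^{1/k_n}$. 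A direct check with $\alpha_1=3d$ using \eqref{eq-def-k-n} (treating $d=2$ and $d\geq 3$ separately) shows $(2R)^{d/2}\leq(c_2\log n)^{\alpha_1}$ for large $n$, so $\calU_0\subset\locD$ and $\P(v\in\locD)\geq\P(X_v\geq p_0)\geq n^{-d}k_n^{2d}\log n$ by Lemma~\ref{goodsiteprob}.

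For part~(3), the key input is that $p_0\geq\beta_\chi$: Lemma~\ref{lem-lambdac} yields $\P(X_v\geq\beta_\chi)\gtrsim n^{-d+1}$, which for large $n$ exceeds the threshold $n^{-d}k_n^{2d}\log n$ in the definition of $p_0$. Hence $p_{\alpha_2}\geq\beta_\chi(c_2\log n)^{-\alpha_2}=\chi^{k_n/(\log n)^{2/d}}(c_2\log n)^{-\alpha_2}$. Taking the $k_n$-th root and using $(\alpha_2/k_n)\log(c_2\log n)=o((\log n)^{-2/d})$ (a consequence of $k_n$ being at least a large power of $\log n$) absorbs the polylogarithmic correction into the dominant term, giving $p_{\alpha_2}^{1/k_n}\geq 1-\chi'/(\log n)^{2/d}$ for a new constant $\chi'=\chi'(d,\pe)$.

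No single step is deep; the main obstacle is bookkeeping the polylogarithmic factors and verifying that three quantitative inequalities actually hold for all $d\geq 2$ and large $n$: the spacing inequality $14d\geq 2(\alpha_2+5)$ in part~(1), the domination $(2R)^{d/2}\leq(c_2\log n)^{\alpha_1}$ in part~(2), and $(\log\log n)/k_n=o((\log n)^{-2/d})$ in part~(3). The first two are essentially sharp at $d=2$ and rely precisely on the choices $\alpha_1=3d$, $\alpha_2=4d$ and of $k_n$ in \eqref{eq-def-k-n}.
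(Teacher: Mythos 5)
Your proposal is correct and follows essentially the same route as the paper: it uses the eigenvalue–survival comparison \eqref{eq:eigv-2} to transfer points of $\D_{p_{\alh}^{1/k_n}}$ and $\locD$ to nearby points of $\calU_{\alh}$, $\calU_{\alt}$ (and to show $\calU_0\subset\locD$), and then invokes Lemmas~\ref{faraway1}, \ref{goodsiteprob} for (1)–(2) and Lemma~\ref{lem-lambdac} together with the definition of $p_0$ for (3), exactly as in the paper's two-line argument. The extra bookkeeping you supply (the check $14d\geq 2(\alh+5)$, the absorption of the $(2R)^{d/2}$ and $(c_2\log n)^{\alh/k_n}$ factors, and the harmless enlargement of $B_{2n}(0)$) is accurate and only makes explicit what the paper leaves implicit.
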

\begin{proof}
  It follows from \eqref{eq:eigv-2} that
\begin{equation}
\label{eq:3.9}
\begin{split}
	& \{v \in \D_{p_{\alh}^{1/k_n}}\} \subset \cup_{u \in B_{R}(v)}\{u \in \calU_{\alh}\}\,, \ \\
	& \{v \in \calU_0\} \subset \{v \in \locD\} \subset \cup_{u \in B_{R}(v)}\{u \in \calU_{\alt}\}\,.
\end{split}
\end{equation}
Combining with Lemmas \ref{faraway1} and \ref{goodsiteprob} yields (1) and (2). Combining Lemma \ref{goodsiteprob} and Lemma \ref{lem-lambdac} gives (3).
\end{proof}

The following structural property for $\locD$ will be useful.
\begin{lemma}
\label{Emap}
	With $\P$-probability tending to one, there exists a subset $\locV \subset \locD \cap \calC(0) \cap B_{2n}(0)$ such that
\begin{equation}
\label{eq:def-locv}
\begin{split}
		&\lambda_v = \max \{\lambda_u:{u \in B_{3R}(v)} \} \quad \forall \ v \in \locV\,;\\
	&\locD \cap \calC(0) \cap B_{2n}(0) \subset \cup_{v \in \locV}B_{3R}(v)\,;\\
	&B_{nk_n^{-14d}}(v), v \in \locV \cup\{0\}\text{ are disjoint}\,.\\
\end{split}
\end{equation}\end{lemma}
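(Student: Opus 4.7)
The plan is to use the dichotomy supplied by Corollary~\ref{lqbzlands}(1) to partition $\locD \cap B_{2n}(0)$ into small-diameter, well-separated clusters, then designate an eigenvalue-maximizer of each cluster that meets $\calC(0)$ as its representative.

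First, since $\alt = 3d < 4d = \alh$ implies $p_{\alt} > p_{\alh}$, we have $\locD \subset \D_{p_{\alh}^{1/k_n}}$, so Corollary~\ref{lqbzlands}(1) yields the following dichotomy: for any two distinct $u, v \in \locD \cap B_{2n}(0)$, either $|u - v| \leq 3R$ or $|u - v| > n k_n^{-14d}$. Since $6R \ll n k_n^{-14d}$ for large $n$, a short chaining argument shows that ``$|u-v|\leq 3R$'' is transitive on this set and therefore partitions it into clusters $C_1, \dots, C_m$ of diameter at most $3R$ with pairwise separations exceeding $n k_n^{-14d}$. For each cluster $C_i$ meeting $\calC(0)$, I would pick $v_i^{\star} \in C_i$ with $\lambda_{v_i^{\star}} = \max_{u \in C_i} \lambda_u$, and define $\locV$ to be the subcollection of those $v_i^{\star}$ that lie in $\calC(0)$.

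With this construction, properties (1) and (3) are immediate. For (1): if some $u \in B_{3R}(v_i^{\star})$ satisfied $\lambda_u > \lambda_{v_i^{\star}}$, then $\lambda_u > p_{\alt}^{1/k_n}$, so $u \in \locD$; by the dichotomy $u$ would then lie in the same cluster $C_i$, contradicting the choice of $v_i^{\star}$. For (3): distinct $v_i^{\star}$ lie in distinct clusters and are thus at distance $> n k_n^{-14d}$ apart, and disjointness from the ball around the origin follows from another application of the same dichotomy (any $v_i^{\star} \in \locD$ within $n k_n^{-14d}$ of $0$ would have to be within $3R$ of $0$, a case controlled by combining Lemma~\ref{richsite} and Lemma~\ref{chemdis}). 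For (2), any $u \in \locD \cap \calC(0) \cap B_{2n}(0)$ belongs to some cluster $C_i$ meeting $\calC(0)$, giving $u \in C_i \subset B_{3R}(v_i^{\star})$, \emph{provided} that $v_i^{\star} \in \locV$, i.e., $v_i^{\star} \in \calC(0)$.

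The main obstacle is precisely this last point: showing on a $\P$-event of probability tending to one that the maximizer $v_i^{\star}$ of every cluster $C_i$ meeting $\calC(0)$ is itself in $\calC(0)$---equivalently, $C_i \cap \calC(0) \neq \varnothing \Rightarrow C_i \subset \calC(0)$. The subtlety is this: if some $w \in C_i$ had $\calC(w) \neq \calC(0)$, Lemma~\ref{chemdis} would force $|\calC(w)| \leq (\log n)^3$, whereas the discrete Faber--Krahn inequality combined with $\lambda_w \geq p_{\alt}^{1/k_n} \geq 1 - O((\log n)^{-2/d})$ from Corollary~\ref{lqbzlands}(3) only yields $|\calC_R(w)| \gtrsim \log n$, which is a priori compatible with a small finite component. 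I would bridge this gap via a union bound, using that the number of finite open clusters of size in $[\log n, (\log n)^3]$ intersecting $B_{2n}(0)$ is subcritically controlled, and that the event of any such cluster hosting an $\locD$-vertex within $3R$ of $\calC(0)$ has $\P$-probability tending to zero---so on the complementary good event every $w$ in any cluster touching $\calC(0)$ automatically satisfies $\calC(w) = \calC(0)$.
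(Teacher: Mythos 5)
Your cluster decomposition (the dichotomy from Corollary~\ref{lqbzlands}(1), transitivity of ``within $3R$'', maximizer per cluster) is the same skeleton as the paper's one-line proof via \eqref{eq:3.9} and Lemma~\ref{faraway1}, but two points do not go through as written. First, the separation from the origin in property (3) is handled with the wrong tool: the dichotomy constrains pairs of points of $\calU_{\alh}\cup\D_{p_{\alh}^{1/k_n}}$, and the origin is (typically) not such a point, so nothing forces a representative lying within $nk_n^{-14d}$ of $0$ to be within $3R$ of $0$; moreover Lemma~\ref{richsite} is an existence statement and Lemma~\ref{chemdis} controls chemical distances and finite-cluster sizes, so neither excludes $\locD$-points near the origin. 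The missing ingredient --- and the reason the paper cites Lemma~\ref{goodsiteprob} --- is a first-moment bound: by \eqref{eq:3.9} and Lemma~\ref{goodsiteprob} (equivalently Corollary~\ref{lqbzlands}(2)), $\P(v\in\locD)\leq k_n^{O(d)}n^{-d}$, so the expected number of $\locD$-points within distance $2nk_n^{-14d}+3R$ of the origin is $O\big(k_n^{-14d^2+O(d)}\big)=o(1)$, and w.h.p.\ no island comes that close to $0$. Relatedly, pairwise distance $>nk_n^{-14d}$ between representatives is not by itself enough for disjointness of the radius-$nk_n^{-14d}$ balls; you need $>2nk_n^{-14d}$, which does follow from Lemma~\ref{faraway1} with $\alpha=\alh$ (the true separation scale is $nk_n^{-(8d+10)}\gg 2nk_n^{-14d}$), but this has to be invoked.

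Second, you correctly isolate the real subtlety --- that the cluster maximizer may lie outside $\calC(0)$, and that Faber--Krahn plus Corollary~\ref{lqbzlands}(3) only forces $|\mathcal C_R(w)|\gtrsim\log n$ while Lemma~\ref{chemdis} allows finite clusters up to size $(\log n)^3$ --- but your proposed patch is not a proof. The per-vertex probability of lying in a finite cluster of size at least $c\log n$ is only $e^{-c\sqrt{\log n}}$, and $\P(w\in\locD)$ is only $n^{-d}$ times a poly-logarithmic factor, so neither bound alone (nor the count of such clusters in $B_{2n}(0)$, which is typically of order $n^{d-o(1)}$, not small) supports a union bound over $\asymp n^d$ vertices; one would need to show the \emph{joint} event $\{w\in\locD,\ \calC(w)\neq\calC(0)\}$ has probability $o(n^{-d})$, which requires genuinely combining the ``large local eigenvalue'' cost with the ``enclosed by a closed surface'' cost (a BK-type or conditioning/surgery argument), none of which you sketch. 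The cleaner route, consistent with how the lemma is actually used (in Definition~\ref{def:vt}, Proposition~\ref{endpointloc} and Lemma~\ref{eigvest} only sites on the surviving path, hence in $\calC(0)$, are ever compared to $\lambda_{\smt}$), is to take the representative to be the maximizer of $\lambda$ over $C_i\cap\calC(0)$, which gives (1) with the maximum restricted to $B_{3R}(v)\cap\calC(0)$ and makes (2) immediate; but that proves a weaker statement than the lemma as written, so for the statement as stated your argument has a genuine gap at exactly the point you flagged.
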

\begin{proof}[Proof of Lemma \ref{Emap}]
Combining \eqref{eq:3.9} and Lemmas \ref{faraway1}, \ref{goodsiteprob} yields the desired result.
\end{proof}

We will prove in Section \ref{sec:localization} that random walk will eventually be localized in the union of the following islands for some constant $\iota>0$ to be selected:
\begin{equation}
\label{eq:def-dn}
  D_n = \bigcup_{v \in \locV } B_{(\log n)^\iota k_n}(v)\,.
\end{equation}
\begin{proof}[Proof of Theorem \ref{thm-main}: volume of the islands]
Combining Corollary \ref{lqbzlands} (2) and Markov inequality implies that with $\P$-probability tending to one, $|\locD \cap B_{2n}(0)| \leq (\log n)^{100d}$. Then by Lemma \ref{Emap},
	 $$|D_n| \leq (2(\log n)^\iota k_n)^d |\locD \cap B_{2n}(0)| \leq (\log n)^{\iota + 200d}\,,$$
	 and $D_n \cap B_{n (\log n)^{-100d^2}} = \varnothing$.
\end{proof}
\subsection{Proof of Lemmas \ref{eigv}, \ref{goodsiteprob}, \ref{faraway1}, \ref{richsite} and \ref{chemdis}}\label{sec:proofs}
\begin{proof}[Proof of Lemma \ref{eigv}]
 Recall that $P|_{\mathcal C_R(v)}$ is the transition matrix restricted to $\mathcal C_R(v)$. Let $\11_x = (0,\dots,0,1,0,\dots,0) \in \R^{C_R(v)}$ be the vector which takes value $1$ only in the coordinate corresponding to the site $x$, and let $\mathbf{1} = (1,1,\dots,1) \in  \R^{C_R(v)}$. We have
 $$\Pr^x(\xi_{\mathcal C_R(v)} > m) = \11_x^{\tran} (P|_{\mathcal C_R(v)})^m \mathbf{1} \leq \lambda_v^m \sqrt{|\mathcal C_R(v)|} \leq (2 R)^{d/2}\lambda_v^m.$$
 Let $\mu$ be the eigenvector of $P|_{\mathcal C_R(v)}$ corresponding to $\lambda_v$, then $$\sum_{x \in \mathcal C_R(v)}\mu(x)\Pr^{x}(\xi_{\mathcal C_R(v)} > m) =\mu^\tran (P|_{\mathcal C_R(v)})^m \mathbf{1}= \lambda_v^m \sum_{x \in \mathcal C_R(v)}\mu(x)\,.$$
  Hence there exists $x \in \mathcal C_R(v)$ such that $\Pr^{x}(\xi_{\mathcal C_R(v)} > m) \geq \lambda_v^m.$
\end{proof}
\begin{proof}[Proof of Lemma \ref{goodsiteprob}]
By Lemma~\ref{lem-lambdac}, since $\beta_\chi \leq n \beta_{\chi/2}$ we can choose  $\chi = \chi(d,\pe)$ such that for large $n$ 
\begin{equation}
  \P(X_v \geq n\beta_\chi) \geq n^{-d + 1}\,.
\end{equation}
Thus, by definition of $p_0$ we see that for any fixed $\alpha$ and sufficiently large $n$,  \begin{equation}
 \label{palphalowerbound}
p_0 \geq n \beta_\chi \mbox{ and hence }   p_\alpha \geq \beta_\chi\,.
 \end{equation}
This allows us to apply Proposition \ref{tail} with $\beta = p_0$, yielding that
\[
  \P(X_v \geq p_0) \leq c_1 k_n^d n^{-d} k_n^{2d} \log n + n^{-2d-1} \leq  n^{-d} k_n^{4d}.
\]
The left continuity of $\P(X_v \geq x)$ gives$$ \P(X_v \geq p_0) \geq n^{-d} k_n^{2d}\log n.$$
Therefore, for $\beta \in (\beta_\chi, p_0)$ and sufficiently large $n$, \eqref{eq:tail} implies 
$$ \P(X_v \geq \beta) \leq 2c_1 k_n^d \P(X_v \geq c_2 \beta \log n)\,.
$$
Since $p_0 = (c_2 \log n)^\alpha p_\alpha$, applying the above inequality $\alpha$ times yields 
$$
\P(X_v \geq p_\alpha) \leq n^{-d}k_n^{(\alpha + 4)d }. \qedhere
$$
\end{proof}

\begin{proof}[Proof of Lemma \ref{faraway1}]
  For any $u, v \in B_{2n}(0)$ such that $ |u - v| \geq 2k_n $, the events $\{X_v \geq p_{\alpha}\}$ and $\{X_u \geq p_{\alpha}\}$ are independent (since in $k_n$ steps, the random walk will not exit the ball of radius $k_n$). Hence Lemma~\ref{goodsiteprob} yields
  $$\P(X_v \geq p_{\alpha}, X_u \geq p_{\alpha}) \leq n^{-2d} k_n^{2(\alpha+4)d}.$$
Then we complete the proof by enumerating all possible $(u,v) \in B_{2n}(0) \times B_{2n}(0)$ such that $2 k_n \leq |u-v| \leq n  k_n^{-2(\alpha+5)}$.
\end{proof}



\begin{proof}[Proof of Lemma~\ref{chemdis}]
By \cite[Theorem 3]{CCN87} and \cite{GM90b} (see also \cite[Corollary 3]{KZ90}) there exists $C>0$ which only depends on $\pe$ such that for all $m \geq 1$
$$\P( |\mathcal C(v)| = m) \leq \mathrm{e}^{ - C m^{1/2}}.$$
Then for any $v \in \Z^d$
\begin{equation}
  \P((\log n)^3 \leq |\mathcal C(v)| <\infty) 
    \leq  \sum_{m \geq (\log n)^3} \mathrm{e}^{-Cm^{1/2}}
       = o(n^{-d}).
\end{equation}
This proves \eqref{eq:finite cluster}.

By \cite[Theorem 1]{AP96}, we know that for $u, v$ with $| u - v| \geq (\log n)^3$ (the main arguments in \cite{AP96} were written for bond percolation, but as the authors suggest one can verify that the proof adapts to site percolation with minimal changes)
  $$\P(u \leftrightarrow v, D(u,v)>\rho | u - v|) \leq e^{-C| u - v|} \leq n^{-C (\log n)^2},$$
  Hence the event $E_n$ that $D(u,v)\11_{\{u \leftrightarrow v \}} \leq \rho|u - v|$ for all $u, v \in B_{3n}(0)$ with $| u - v| \geq (\log n)^3$ has probability tending to one.

  On the event $E_n$, we consider any $u, v \in B_{2n}(0)$ such that $u \leftrightarrow v$ with $| u - v| < (\log n)^3$. 
In the case $\mathcal C(u) = \mathcal C(0)$, we see from the connectivity that there exists $w\in \mathcal C(0)$ such that $\min(| w - v|, | w - u|)  \in [(\log n)^3 , (\log n)^3 +2]$. Then by triangle inequality $\max(| w - v|, | w - u|) \leq 4 (\log n)^3$. Hence $$D(u,v) \leq D(u,w) + D(v,w) \leq 5\rho (\log n)^3.$$
In the case that $\mathcal C(u) \neq \mathcal C(0)$, It follows from \eqref{eq:finite cluster} that with $\P$-probability tending to one $$D(u,v) \leq |\mathcal C(u)| \leq  (\log n)^3.$$
The proof is completed by adjusting the value of $\rho$.
\end{proof}

\begin{remark}
The work \cite{AP96} improves earlier results of \cite{GM90, GM90b}, where the main objective of \cite{GM90} is to understand certain parabolic problems for the Anderson model with heavy potential.
\end{remark}

\begin{proof}[Proof of Lemma \ref{richsite}]
We say a site $v \in \Z^d$ is reachable if the connected component in $B_{k_n}(v) \backslash \ob$ that contains $v$ is of size at least $k_n$. Then by \eqref{eq:finite cluster}, conditioned on origin being in an infinite cluster, with $\P$-probability approaching one all reachable sites are in $\mathcal C(0)$. Let $\calU^*_0 = \{v \in \calU_0: v\mbox{ is reachable} \}$, it suffices to prove $$\P(\calU^*_0 \cap B_{n/k_n}(0) = \varnothing) \to 0\,.$$

To verify this, we first observe that for each site $v \in \Z^d$ in an infinite cluster,  it connects to $\partial B_{k_n}(v)$ by an open path. Hence the connected component in $B_{k_n}(v) \backslash \ob$ that contains $v$ has at least $k_n$ vertices. As a result,
\begin{equation*}
	 \P(v \mbox{ is reachable}) \geq \theta(\pe)\,.
\end{equation*}
Now by FKG inequality and Lemma~\ref{goodsiteprob},  $$\P(v \in \calU^*_0) \geq \P(v \in \calU_0) \cdot \P(v \mbox{ is reachable}) \geq \theta(\pe) n^{-d}k_n^{2d} \log n.$$ 
Since events $\{v \in \calU^*_0\}$ for $v \in (2k_n+1)\Z^d$ are independent of each other, we have
\begin{align*}
  \P(\calU^*_0 \cap B_{n/k_n}(0) = \varnothing)&
  \leq  \big(1 - \theta(\pe) n^{-d}k_n^{2d} \log n\big)^{(2d^{-1}\lfloor n/k_n \rfloor(2k_n+1)^{-1} - 1)^d} \\
  & \leq n^{-c}
\end{align*}
for some constant $c = c(d,p)$. This completes the proof of the lemma.
\end{proof}

\section{Endpoint localization}\label{sec:localization}

In this section, we prove that conditioned on survival for a long time the random walk will be localized in an island (which we refer to as target island below) in $D_n$, where the target island is chosen randomly (from all the poly-logarithmic many islands in $D_n$) with respect to the random walk. In addition, the target island will be a neighborhood of $\smt$ (see Definition~\ref{def:vt} below and Figure \ref{fig:def} for an illustration), which is the best island that the random walk ever visits.
\begin{defn}
\label{def:vt}
	On event $\{S_{[0,n]} \cap \locD \not = \varnothing\}$, we let $\smt$ be the unique site in $\locV$ (defined in Lemma \ref{Emap}) such that $$S_{t_*} \in B_{3R}(\smt) \quad \mbox{ for }\ t_* = \min\{0 \leq t \leq n: \lambda_{S_t} = \max_{0 \leq i \leq n} \lambda_{S_i}\}\,.$$ Otherwise, we set $\smt := 0$, as in such case the random walk never visits any candidate regions.

	For $v \in \locV$ and constant $\iota>0$ to be selected, we define the hitting time of a neighborhood of $v$ by
\begin{equation}
\label{eq-def-T}
\ft(v)  = \min \{ 0 \leq t \leq n: |S_t - v| \leq (\log n)^{\iota}\}\,.
\end{equation}
\end{defn}
\begin{figure}
	\includegraphics[width=3in]{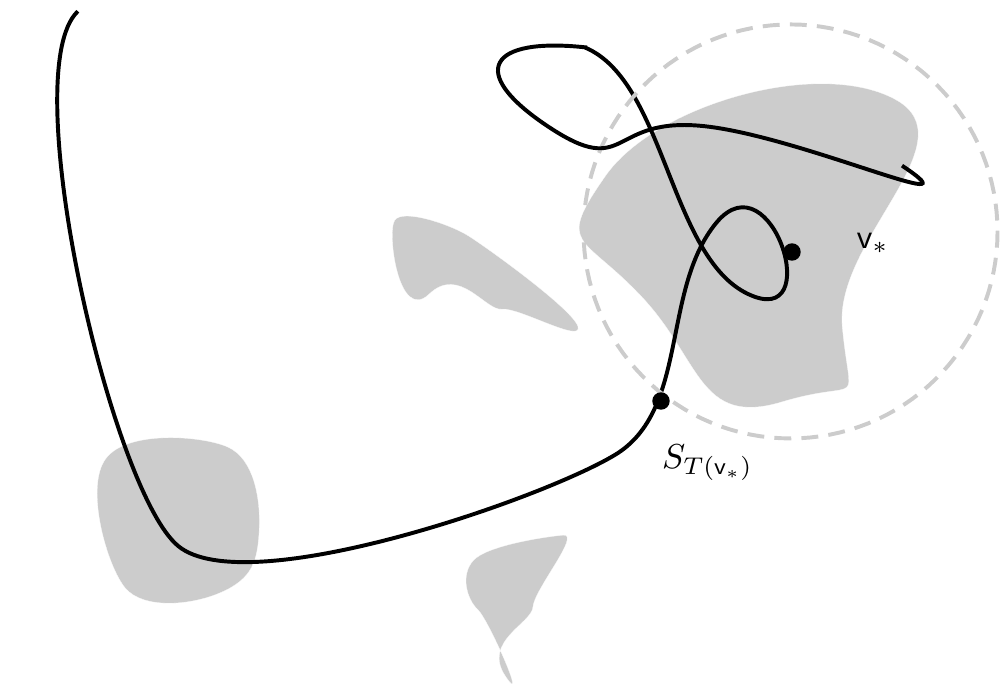}
		\begin{caption}
{The shaded regions are islands in $\loc1D$. The site $\smt$ is the representative of the best island that the random walk ever visits.}
\label{fig:def}
\end{caption}
\end{figure}
The endpoint localization is proved by combining the following two ingredients: 
the random walk will visit $\locD$ with high probability (as shown in Lemma~\ref{firststep});  the random walk will stay in a neighborhood of $\smt$ after getting close to $\smt$ (as shown in Proposition~\ref{endpointloc}).
\begin{lemma}
\label{firststep}
Conditioned on the event that the origin is in an infinite open cluster, 
$$\Pr(\tau_{\loc1D} \leq n \mid \tau > n) \to 1\quad\mbox{ in } \P\mbox{-probability}.\,.$$
 \end{lemma}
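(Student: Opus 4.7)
The plan is to prove $\Pr(\tau>n,\tau_{\locD}>n)/\Pr(\tau>n) \to 0$ in $\P$-probability by separately bounding the numerator from above and the denominator from below; the decisive input is the poly-logarithmic gap $p_0/p_{\alpha_1} = (c_2\log n)^{3d}$ built into the definitions in Section~\ref{sec:candidate-regions}.

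For the denominator I would construct an explicit survival strategy. On the $\P$-event of probability tending to one provided by Lemma~\ref{richsite}, there is a site $v^*\in \calU_0 \cap \calC(0) \cap B_{n/k_n}(0)$; by Lemma~\ref{chemdis} there is also an open path from the origin to $v^*$ of length $L \leq \rho n/k_n$. The walk traces this path with probability $(2d)^{-L}$ and then stays in $\mathcal{C}_R(v^*)$ for the remaining $n-L$ steps. By Lemma~\ref{eigv} there is a witness $x^*\in \mathcal{C}_R(v^*)$ with $\Pr^{x^*}(\xi_{\mathcal{C}_R(v^*)}>n-L) \geq \lambda_{v^*}^{n-L}$, and transporting from $v^*$ to $x^*$ inside $\mathcal{C}_R(v^*)$ costs only a polylogarithmic prefactor $(2d)^{-|\mathcal{C}_R(v^*)|}$. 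Combined with the bound $\lambda_{v^*}\geq (p_0/(2R)^{d/2})^{1/k_n}$ from Lemma~\ref{eigv}, this yields
\[
  \log \Pr(\tau>n) \geq (n/k_n)\bigl[\log p_0 - (d/2)\log(2R)\bigr] - O(n/k_n).
\]

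For the numerator, the naive Markov iteration in blocks of size $k_n$ is too crude: the per-block factor $(2R)^{d/2}$ from Lemma~\ref{eigv} accumulates to $(2R)^{dn/(2k_n)}$ and swamps the $(c_2\log n)^{3d\cdot n/k_n}$ benefit. I would instead iterate in longer blocks of size $T = Ck_n$ with a constant $C=C(d)$ to be fixed (any $C>3d-1$ will work for $d\geq 3$, with the analogous threshold for $d=2$). For $u\notin \locD$ I split $\Pr^u(\tau>T)$ into the probability of remaining in $\mathcal{C}_R(u)$ throughout the block, which is at most $(2R)^{d/2}\lambda_u^T < (2R)^{d/2}p_{\alpha_1}^{T/k_n}$ by Lemma~\ref{eigv}, and the probability of exiting $B_R(u)$ within $T$ steps, which by a standard random walk deviation estimate is at most $\exp(-cR^2/T) = \exp(-ck_n(\log n)^4/C)$ and hence negligible. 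Iterating the Markov property at times $iT$ on $\{\tau_{\locD}>n\}$ then delivers
\[
  \log \Pr(\tau>n,\tau_{\locD}>n) \leq (n/T)(d/2)\log(2R) + (n/k_n)\log p_{\alpha_1} + o(n/k_n).
\]

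Subtracting, the coefficient of the dominant $(n/k_n)\log\log n$ term in the log-ratio reduces to $-3d + (3d-1)(1 + 1/C)$, strictly negative once $C > 3d-1$; picking $C = 3d$ gives log-ratio $\leq -c(n/k_n)\log\log n \to -\infty$. The main obstacle is exactly this bookkeeping: the $(2R)^{d/2}$ factor coming from Lemma~\ref{eigv} is \emph{just barely} compatible with the $(c_2\log n)^{3d}$ gap per $k_n$-block, so the block size $T$ has to be chosen large enough to amortize the polynomial loss below the gap, yet small enough that the escape estimate $R^2/T \gg 1$ is still effective. Since all environmental inputs (Lemmas~\ref{richsite},~\ref{chemdis} and the consequences of Lemma~\ref{eigv}) hold on a $\P$-event whose probability tends to one, the deterministic ratio bound on that event yields convergence in $\P$-probability.
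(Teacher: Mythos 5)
Your proposal is correct, and while the overall scheme (compare an upper bound for surviving-while-avoiding-$\locD$ against a lower bound for surviving) and the lower bound itself coincide with the paper's, your treatment of the numerator is a genuinely different route. The lower bound is exactly \eqref{eq:lowerbd}: reach a site of $\calU_0\cap\calC(0)\cap B_{n/k_n}(0)$ supplied by Lemmas~\ref{richsite} and~\ref{chemdis} along an open path of length $O(n/k_n)$, then survive via the eigenvalue witness of Lemma~\ref{eigv} (the paper packages this as Lemma~\ref{loop}). For the numerator the paper simply invokes Lemma~\ref{Est-eigv} with $\lambda=p_{\alt}^{1/k_n}$, which gives $\Pr(\tau_{\locD\cup\ob}>n)\leq R^{3d}p_{\alt}^{n/k_n}$ with a \emph{single} polylogarithmic prefactor, whereas you rederive an upper bound from scratch by a coarse-grained Markov iteration in blocks of length $T=Ck_n$, paying $(2R)^{d/2}$ once per block and amortizing it by choosing $C>3d-1$. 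Your bookkeeping is right: $(d/2)\log(2R)=(3d-1+o(1))\log\log n$ while the gap per $k_n$ is $\alt\log(c_2\log n)=3d\log\log n+O(1)$, so the coefficient $-3d+(3d-1)(1+1/C)$ is negative for $C>3d-1$ (analogously for $d=2$), the per-block losses on both sides are absorbed, and the ratio tends to $0$ on the good environment event, giving convergence in $\P$-probability. Two remarks: first, your escape term is actually identically zero, since $T=Ck_n<R=k_n(\log n)^2$ means a $T$-step walk cannot leave $B_R(u)$ at all, so the deviation estimate is unnecessary; second, your per-block bound is strictly weaker than Lemma~\ref{Est-eigv} --- it carries a loss per block and only wins when the time horizon is of order $n$ --- so although it suffices here, it could not substitute for Lemma~\ref{Est-eigv} in its other applications (Proposition~\ref{endpointloc}, Lemmas~\ref{eigvest} and~\ref{nobigloop-2}), where a uniform-in-$m$ bound with one prefactor is essential. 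What the paper's route buys is that single reusable estimate, at the price of the excursion decomposition based on Lemma~\ref{faraway1}; what your route buys is a shorter, self-contained proof of Lemma~\ref{firststep} that needs only Lemma~\ref{eigv}, the definition of $\locD$, and the calibration of $R$, $k_n$ and $\alt$.
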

\begin{proposition}
\label{endpointloc}
For $\iota$ sufficiently large, conditioned on the event that the origin is in an infinite open cluster, we have that
\begin{equation}
  \Pr(S_{[\ft(\smt),n]} \subset B_{(\log n)^{\iota}k_n}(\smt) \mid \tau > n) \to 1 \quad\mbox{ in } \P\mbox{-probability}.
 \end{equation} 
 \end{proposition}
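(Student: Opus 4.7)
The plan is to decompose by the value of $\smt$ and perform a path-surgery comparison at each $v \in \locV$. By Lemma~\ref{firststep} the event $\{\smt = 0\}$ has probability $o(1)$ conditionally on $\{\tau>n\}$, so it suffices to show, for each $v \in \locV \cap B_{2n}(0)$,
\begin{equation*}
\Pr\bigl(\smt=v,\ S_{[\ft(v),n]}\not\subset B_{(\log n)^\iota k_n}(v),\ \tau>n\bigr)\ \leq\ (\log n)^{-\omega(1)}\,\Pr(\smt=v,\ \tau>n).
\end{equation*}
Summing over $v$, the disjointness of $\{\smt=v\}$'s together with $\sum_v\Pr(\smt=v,\tau>n)\leq\Pr(\tau>n)$ then concludes. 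The cardinality $|\locV \cap B_{2n}(0)|\leq(\log n)^{O(d)}$, controlled via Lemma~\ref{Emap} and Corollary~\ref{lqbzlands}(2) with Markov, supplies the finite index set.

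The key structural input is that, for any $v\in\locV$, every vertex $u$ in the annulus $\mathcal A_v := B_{nk_n^{-14d}}(v)\setminus B_{3R}(v)$ satisfies $\lambda_u < p_{\alh}^{1/k_n}$ by Corollary~\ref{lqbzlands}(1), whereas $\lambda_v\geq p_{\alt}^{1/k_n}$. Translated via Lemma~\ref{eigv}, any $k_n$ consecutive steps of the walk starting at such a $u$ have survival probability at most $(2R)^{d/2}\,p_{\alh}$, whereas the reference strategy near $v$ delivers block-survival at least $\lambda_v^{k_n}/(2R)^{d/2}\geq p_{\alt}/(2R)^{d/2}$. Since $p_{\alh}/p_{\alt}=(c_2\log n)^{-d}$, each such block substitution costs a factor $(\log n)^{-d+o(1)}$ in the walk's survival probability.

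For the path-surgery step, on the bad event let $T'>\ft(v)$ be the first time the walk leaves $B_{(\log n)^\iota k_n}(v)$. Starting from $S_{\ft(v)}$, which lies within distance $(\log n)^\iota$ of $v$, the trajectory must cover at least $(\log n)^\iota k_n-(\log n)^\iota\asymp(\log n)^\iota k_n$ lattice steps before touching $\partial B_{(\log n)^\iota k_n}(v)$. Once $\iota$ is chosen large enough that $(\log n)^\iota k_n\gg 3R$, all but the innermost $3R$ of these steps lie in $\mathcal A_v$, producing $\Omega((\log n)^\iota)$ disjoint $k_n$-blocks started in $\mathcal A_v$. A path-measure surgery swapping these excursions against the reference strategy of reaching and staying near $v$ thereby gains a multiplicative factor $(\log n)^{-d(\log n)^\iota+o(1)}$. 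Trajectories that escape $\mathcal A_v$ altogether are forbidden, on $\{\smt=v\}$, from entering any candidate region with eigenvalue exceeding $\lambda_v$; by Lemma~\ref{faraway1} every other element of $\locV$ sits outside the distance range $(2k_n,\,nk_n^{-2(\alt+5)})$ from $v$, so any escape accumulates $\gtrsim nk_n^{-14d}$ annular blocks and pays a proportionally larger penalty. Summing over the at most $n$ candidate exit times $T'$ still yields the desired $(\log n)^{-\omega(1)}$ ratio, provided $\iota$ is large enough that $d(\log n)^\iota\log\log n\gg\log n$ (any constant $\iota\geq 2$ suffices for large $n$).

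The main obstacle is implementing the path-measure surgery rigorously when the walk oscillates many times in and out of $B_{(\log n)^\iota k_n}(v)$: the per-block penalty is clean only when the block's \emph{starting point} lies in $\mathcal A_v$, so a careful decomposition (for instance, disjoint $k_n$-sub-blocks embedded within each maximal excursion into $\mathcal A_v$) is needed to ensure the total excursion time $\gtrsim(\log n)^\iota k_n$ translates into $\gtrsim(\log n)^\iota$ valid annular blocks. A robust implementation iterates Lemma~\ref{eigv} on each maximal excursion and pairs it with the reference measure on the same time interval, so only excursion costs (and not the travel cost to $v$, which would otherwise contribute an exponentially large $(2d)^{-\rho n/k_n}$ factor if one naively compared to the unconditional $\Pr(\tau>n)$) appear in the ratio; the polynomial-in-$R$ prefactors thus incurred are easily absorbed by the overall $(\log n)^{-d(\log n)^\iota}$ gain.
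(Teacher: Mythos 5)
Your skeleton matches the paper's: decompose on $\{\smt=v\}$, note that on this event the walk never visits $\tD_{\lambda_v}$, use Corollary~\ref{lqbzlands}(1) to see that the annulus around $v$ is free of $\calU_{\alh}$ so that each $k_n$-block started there survives with probability at most $p_{\alh}$, and compare against the ``stay near $v$'' strategy whose per-block rate is $\lambda_v^{k_n}\geq p_{\alt}$, gaining $(\log n)^{-d}$ per block and $(\log n)^{-\Theta((\log n)^\iota)}$ overall. The genuine gap is in how you control the parts of the trajectory after $\ft(v)$ that are \emph{not} in the annulus: the (typically dominant) time spent in and around $B_{3R}(v)$, and any time spent beyond $B_{nk_n^{-14d}}(v)$. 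These stretches must be bounded above by $\mathrm{poly}(R)\,\lambda_v^{\mathrm{length}}$ with a \emph{single} polynomial prefactor, so as to cancel against the reference lower bound; for an individual vertex $u\notin\tD_{\lambda_v}$ one only has $X_u\leq (2R)^{d/2}\lambda_v^{k_n}$ from Lemma~\ref{eigv}, i.e.\ a prefactor per block, and Lemma~\ref{loop} likewise costs $(2d)^{-\rho(\cdot+R)}$ per application on the lower-bound side. Your proposed remedy --- ``iterate Lemma~\ref{eigv} on each maximal excursion and pair it with the reference measure on the same time interval,'' with the prefactors ``easily absorbed'' --- does not work: the walk can oscillate across $\partial B_{3R}(v)$ (or across $\partial B_{(\log n)^\iota k_n}(v)$) arbitrarily often, so the number of excursions, hence of prefactors, can be of order $n/k_n$ or even $n$. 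The accumulated loss is then $e^{\Theta((n/k_n)\log\log n)}$, which overwhelms the gain $e^{-\Theta((\log n)^\iota\log\log n)}$ for any constant $\iota$; the absorption claim is exactly the point that needs proof, and it fails if done block-by-block or excursion-by-excursion.

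This accumulation problem is precisely what the paper isolates and solves in Lemma~\ref{Est-eigv}, which gives $\Pr^x(\tau_{\ob\cup\tD_{\lambda}}>m)\leq R^{3d}\lambda^m$ uniformly in $m$ with one polynomial prefactor, and whose proof is itself nontrivial (it uses Lemma~\ref{faraway1} to show that each re-entry into $\calU_{\alh}$ forces a long traverse, paying for both the time enumeration and the per-visit eigenvalue errors). With that lemma the paper needs only three intervals: the last crossing of $B_{(\log n)^\iota k_n}(v)\setminus B_{3R}(v)$ before the first exit --- a single \emph{consecutive} annular stretch of length at least $\lceil(\log n)^\iota k_n/2\rceil$, handled by Lemma~\ref{lem-avoid-U-alpha} --- and the two complementary stretches, each handled in one shot by Lemma~\ref{Est-eigv}; Lemma~\ref{loop} is then invoked once for the lower bound, and the sum over the at most $n$ choices of the crossing time and over $v\in\locV$ is beaten by the superpolynomial gain. (This last point also lets the paper compare simply to $\Pr^{S_{\ft(v)}}(\tau>n-\ft(v))$, i.e.\ to $\Pr(\tau>n)$, sidestepping your additional, unverified requirement that the surgered reference paths still realize $\smt=v$.) Without proving an analogue of Lemma~\ref{Est-eigv}, your surgery does not close.
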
 
\begin{proof}[Proof of Theorem~\ref{thm-main}: endpoint localization] Set $\iota$ to be a sufficiently large constant as in Proposition~\ref{endpointloc}.
Combining Lemma \ref{firststep} and Proposition~\ref{endpointloc} gives
$$\Pr\big(S_n \in D_n\mid \tau > n\big) \to 1\,. \qedhere$$
\end{proof}

In order to prove Lemma~\ref{firststep} and Proposition~\ref{endpointloc}, we first provide upper bound on the probability for the random walk to survive and also avoid $\calU_\alpha$ (or respectively $\tD_\lambda$) as in Lemmas~\ref{lem-avoid-U-alpha} (respectively Lemma \ref{Est-eigv}). Provided with Lemma~\ref{Est-eigv}, Lemma~\ref{firststep} follows from a lower bound  on $\Pr(\tau > n)$, which is substantially larger than (the upper bound on)  $\Pr(\tau_{\ob \cup \loc1D} > n)$. The proof of Proposition~\ref{endpointloc} is yet more complicated, which will employ a careful application of Lemmas \ref{lem-avoid-U-alpha} and \ref{Est-eigv} together with Lemma~\ref{faraway1}.

\subsection{Upper bounds on survival probability}
\begin{lemma}\label{lem-avoid-U-alpha}
For all $\alpha \geq 0$ and $m\geq 1$, we have that for all $v\in \mathbb Z^d$
$$ \Pr^v( \tau_{\calU_{\alpha} \cup\ob} > m) \leq (2R)^{d/2}p_{\alpha}^{m/k_n}\,.$$
\end{lemma}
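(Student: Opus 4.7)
The plan is to split the time interval into a bulk of $jk_n$ steps followed by a short tail of $r$ steps, where $m=jk_n+r$ with $0\le r<k_n$, handling the bulk by an iterated Markov argument at the natural scale $k_n$ and the tail by a finer spectral estimate. We may assume $v\notin\calU_\alpha\cup\ob$, as otherwise the left-hand side vanishes.

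For the bulk, the key observation is that whenever $u\notin\calU_\alpha\cup\ob$,
$$\Pr^u(\tau_{\calU_\alpha\cup\ob}>k_n)\le\Pr^u(\tau>k_n)=X_u<p_\alpha,$$
since $u\notin\calU_\alpha$ means $X_u<p_\alpha$ by definition. Iterating this via the strong Markov property at the times $k_n,2k_n,\dots,(j-1)k_n$ then yields $\Pr^v(\tau_{\calU_\alpha\cup\ob}>jk_n)\le p_\alpha^{\,j}$.

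For the tail, I would introduce the auxiliary set $A_u:=\mathcal C_R(u)\setminus \calU_\alpha$ with $u=S_{jk_n}$, which on the event $\{\tau_{\calU_\alpha\cup\ob}>jk_n\}$ satisfies $u\in A_u\subset B_R(u)$ and contains no vertex of $\ob\cup\calU_\alpha$. Since $r<k_n\ll R$, the walk cannot leave $B_R(u)$ in $r$ steps, so surviving and avoiding $\calU_\alpha$ is equivalent to staying inside $A_u$. Writing $\tilde\lambda_u$ for the principal eigenvalue of the simple random walk transition matrix restricted to $A_u$, the argument proving Lemma~\ref{eigv} works verbatim for any finite set and yields
$$\Pr^u(\tau_{\calU_\alpha\cup\ob}>r)=\Pr^u(\xi_{A_u}>r)\le |A_u|^{1/2}\tilde\lambda_u^{\,r}\le (2R)^{d/2}\tilde\lambda_u^{\,r}.$$
The main (and essentially only nonobvious) step is then to show $\tilde\lambda_u<p_\alpha^{1/k_n}$: applying the lower-bound half of the same spectral inequality at time $k_n$, one obtains some $x\in A_u$ with $\tilde\lambda_u^{\,k_n}\le\Pr^x(\xi_{A_u}>k_n)\le\Pr^x(\tau>k_n)=X_x<p_\alpha$, the strict inequality being $x\in A_u\subset\calU_\alpha^c$. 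Combining the bulk and tail bounds via Markov,
$$\Pr^v(\tau_{\calU_\alpha\cup\ob}>m)\le p_\alpha^{\,j}\cdot(2R)^{d/2}p_\alpha^{\,r/k_n}=(2R)^{d/2}p_\alpha^{\,m/k_n}.$$

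The conceptual subtlety worth emphasizing is that working with the original $\lambda_u$ from Definition~\ref{def:CRlambda} would not suffice: $u\notin\calU_\alpha$ still permits $\mathcal C_R(u)\cap\calU_\alpha\ne\varnothing$, in which case $\lambda_u$ can be far larger than $p_\alpha^{1/k_n}$. Excising $\calU_\alpha$ from $\mathcal C_R(u)$ before computing the eigenvalue is precisely what makes the tail estimate sharp enough to close the argument.
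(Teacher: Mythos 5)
Your proposal is correct and takes essentially the same route as the paper's proof: the bulk of $j$ blocks of length $k_n$ is handled by the Markov property together with the fact that $X_u<p_\alpha$ off $\calU_\alpha\cup\ob$, and the remaining $r<k_n$ steps are controlled through the principal eigenvalue of $\mathcal C_R(u)\setminus\calU_\alpha$ exactly as the paper does with its $\mathcal C_{\alpha,R}$ and the argument of Lemma~\ref{eigv}. The only cosmetic difference is that you peel blocks from the front while the paper peels off the last $k_n$ steps at each iteration, which is immaterial.
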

\begin{proof}
Write $m = jk_n + i$ where $0\leq i<k_n$ and $j\in \mathbb N^*$. By the strong Markov property, we see that for all $v\in \mathbb Z^d$
  \begin{align*}
 \Pr^v( \tau_{\calU_{\alpha} \cup\ob} > m) &= \sum_{x \in (\calU_{\alpha} \cup\ob)^c} \Pr^v(  \tau_{\calU_{\alpha} \cup\ob} > m, S_{m-k_n} = x)\\
    & \leq \sum_{x \in (\calU_{\alpha} \cup\ob)^c} \Pr^v( \tau_{\calU_{\alpha} \cup\ob} > m-k_n, S_{m-k_n} = x) \Pr^x( \tau_{\calU_{\alpha} \cup\ob} >  k_n)\\
    & \leq  \Pr^v( \tau_{\calU_{\alpha} \cup\ob} > m - k_n)\cdot p_{\alpha}.
  \end{align*}
Applying the preceding inequality repeatedly, we get that 
\begin{equation}\label{eq-recursion-avoid-alpha}
\Pr^v( \tau_{\calU_{\alpha} \cup\ob} > m) \leq p_\alpha^j \max_{x\in (\calU_\alpha \cup \ob)^c} \Pr^x(\tau_{\calU_{\alpha} \cup\ob}>i)\,.
\end{equation}
Write $\mathcal C_{\alpha, R}(x) = \mathcal C_R(x) \setminus \calU_\alpha$ and let $\lambda_{\alpha, x}$ be the principal eigenvalue of $P|_{\mathcal C_{\alpha, R}(x)}$. Then, by the same arguments as for Lemma~\ref{eigv}, we deduce that $\lambda_{\alpha, x} \leq (\max_x \Pr^x(\tau_{C_{\alpha, R}(x)}>k_n))^{1/k_n} \leq p_{\alpha}^{1/k_n}$ and then
$$\Pr^x(\tau_{\calU_{\alpha} \cup\ob}>i) \leq (2R)^{d/2}p_\alpha^{i/k_n} \,.$$
Combined with \eqref{eq-recursion-avoid-alpha}, this completes the proof of the lemma.
\end{proof}


\begin{lemma}
\label{Est-eigv}
With $\P$-probability tending to 1 as $n\to \infty$ the following holds. For any $v \in B_n(0)$ and $\lambda > (p_{\alt}/\log n)^{1/k_n}$ and for all $1\leq m \leq n$, we have
\begin{equation}
  \Pr^v(\tau_{\ob \cup \tD_\lambda} > m) \leq R^{3d} \lambda^{m}.
\end{equation}
Here we recall that $\tD_\lambda = \{u \in \Z^d :\lambda_u > \lambda \}$ and $
k_n \geq (\log n)^2.$
\end{lemma}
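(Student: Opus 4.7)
The plan is to mirror the strategy of Lemma \ref{lem-avoid-U-alpha}: split $m$ into blocks, apply the strong Markov property, and bound each block via Lemma \ref{eigv} applied to a locally restricted domain. First, for any $x \notin \ob \cup \tD_\lambda$, I would consider $\tilde{\mathcal C}_R(x)$, the connected component of $x$ in the induced subgraph on $B_R(x) \setminus (\ob \cup \tD_\lambda)$. Since $\tilde{\mathcal C}_R(x) \subset \mathcal C_R(x)$, monotonicity of principal eigenvalues under passing to an induced subgraph (by Rayleigh quotient comparison) will give $\tilde{\lambda}_x := \lambda(P|_{\tilde{\mathcal C}_R(x)}) \leq \lambda_x \leq \lambda$. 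Any walk of $T < R$ steps starting at $x$ stays in $B_T(x) \subset B_R(x)$, so on this time range $\{\tau_{\ob \cup \tD_\lambda} > T\}$ coincides with $\{\xi_{\tilde{\mathcal C}_R(x)} > T\}$, and applying Lemma \ref{eigv} to $P|_{\tilde{\mathcal C}_R(x)}$ would then deliver the per-block bound $\Pr^x(\tau_{\ob \cup \tD_\lambda} > T) \leq (2R)^{d/2}\lambda^T$.

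Second, I would iterate via strong Markov at times $T, 2T, \ldots, jT$. Writing $m = jT + r$ with $0 \leq r < T$, this should give $\Pr^v(\tau_{\ob\cup\tD_\lambda} > m) \leq (2R)^{d(j+1)/2}\lambda^m$. For $m \leq 6R$, I would choose $T = \lceil m/6 \rceil$ so that $j \leq 5$, yielding $(2R)^{3d}\lambda^m$, which matches the claim $R^{3d}\lambda^m$ up to the absolute constant $2^{3d}$ (easily absorbed into the power of $R$).

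The hard part will be the regime $m > 6R$: iterating the naive per-block bound over $j = m/T$ blocks with $T < R$ gives a prefactor $(2R)^{dj/2}$ that is super-polynomial as $m \to n$. To address this, I plan to exploit the clustering structure from Corollary \ref{lqbzlands}(1): since $\lambda > p_{\alh}^{1/k_n}$ forces $\tD_\lambda \subset \tD_{p_{\alh}^{1/k_n}}$, the vertices of $\tD_\lambda$ lie in clusters of diameter at most $6R$, mutually separated by distance at least $nk_n^{-14d}$. Hence in any region of diameter below $nk_n^{-14d}$ the walk encounters at most one $\tD_\lambda$-cluster, and over $m \leq n$ steps it meets at most $k_n^{14d}$ such clusters. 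The idea is then to replace the many-block iteration by a single large-scale application of Lemma \ref{eigv} to one connected safe region of $(\ob \cup \tD_\lambda)^c$ whose principal eigenvalue can be bounded by $\lambda$ up to an IMS-type localization error $O(1/R^2)$ produced by a partition of unity on $R$-scale balls. Because $R \geq (\log n)^4$, this error is negligible on the scales relevant to the $\lambda^m$ decay; combined with the cluster-separation bound on the effective volume, this should yield the polylogarithmic prefactor $R^{3d}$ uniformly in $m \leq n$.
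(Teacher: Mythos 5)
Your per-block estimate and the regime $m\lesssim R$ are fine (monotonicity of the principal eigenvalue under restriction plus Lemma~\ref{eigv} does give $\Pr^x(\tau_{\ob\cup\tD_\lambda}>T)\leq (2R)^{d/2}\lambda^T$ for $T<R$ and $x\notin\ob\cup\tD_\lambda$, and the constant $2^{3d}$ is harmless), and you correctly identify the real difficulty: the prefactor must stay polylogarithmic while $m$ ranges up to $n$. But your proposed fix for $m>6R$ does not close this gap. An IMS/partition-of-unity argument at scale $R$ can at best show that the principal eigenvalue of a large connected piece of $(\ob\cup\tD_\lambda)^c$ is at most $\lambda+CR^{-2}$, and that additive error is fatal for the statement you need: $(\lambda+CR^{-2})^m/\lambda^m\approx\exp(cmR^{-2})$, and with $R=k_n(\log n)^2$ this exceeds the allowed prefactor $R^{3d}$ as soon as $m\gg R^2\log R$, i.e.\ for all but polylogarithmic $m$, let alone $m$ of order $n$. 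Your remark that the error is ``negligible on the scales relevant to the $\lambda^m$ decay'' compares $R^{-2}$ with $1-\lambda$, which is the wrong comparison; what is required is $mR^{-2}\lesssim\log R$ uniformly in $m\leq n$, which fails badly. Enlarging the partition scale toward the cluster-separation scale $nk_n^{-14d}$ does not help either: the resulting pieces have diameter much larger than $R$, and the hypothesis ($\lambda_u\leq\lambda$ for the $R$-ball eigenvalues at points $u\notin\tD_\lambda$) gives no control of the eigenvalue of such large pieces --- you are facing the original problem again at a larger scale.

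The missing ingredient is the quantitative gap between $p_{\alh}$ and $p_{\alt}$, which your sketch never uses and which is precisely what makes the uniform prefactor possible. Off $\calU_{\alh}$ the $k_n$-step survival probability is at most $p_{\alh}=p_{\alt}(c_2\log n)^{-d}$, which is smaller than $\lambda^{k_n}>p_{\alt}/\log n$ by a polylogarithmic factor per $k_n$ steps; and by Lemma~\ref{faraway1} (equivalently Corollary~\ref{lqbzlands}(1)), each time the walk leaves the $R$-ball around an entrance point of $\calU_{\alh}$ and later re-enters $\calU_{\alh}$, it must spend at least $R-2k_n\geq k_n\log n$ consecutive steps off $\calU_{\alh}$. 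The paper therefore decomposes the trajectory into such excursions: each extra excursion costs a factor of order $n^{-7}$ (via Lemma~\ref{lem-avoid-U-alpha} and Lemma~\ref{eigv}), which beats both the enumeration $m^{2l}$ of the excursion times and the $(2R)^{O(d)}$ error factors accumulated per excursion; trajectories with at most one excursion produce the $R^{3d}$ prefactor. Without this penalty-per-excursion mechanism (or some eigenvalue estimate of accuracy $\lambda\bigl(1+O(\log R/n)\bigr)$, far beyond what IMS localization provides), your argument does not yield the lemma for $m$ comparable to $n$, which is exactly the regime in which it is later applied.
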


\begin{proof}
We consider  two scenarios for the random walk. 
In the first scenario, the random walk never enter the region $\calU_{\alh}$. In this case, since for any $u\in \calU_{\alh}^c$ we have $X_u \leq p_{\alh}$, this yields an efficient upper bound. 
In the second scenario, the random walk enters $\calU_{\alh}$ and possibly exits an enlarged neighborhood around $\calU_{\alh}$ and re-enters for multiple times. In this case, we are fighting with the following two factors.
\begin{itemize}
\item The enumeration on the possible times for exiting and re-entering is large (see \eqref{eq:thetal}).
\item  When we estimate the survival probability, we repeatedly use the relation between $X_v$ the principal eigenvalues $\lambda_v$ as in Lemma~\ref{eigv}, and each time we use such a relation we accumulate a certain error factor. As a result, such error factors will grow in the number of times for the random walk to exit an enlarged neighborhood around $\calU_{\alh}$ and then re-enter $\calU_{\alh}$.
\end{itemize}
 In order to beat the preceding two factors, we note that every time the random walk exits an enlarged neighborhood of $\calU_{\alh}$ and re-enters $\calU_{\alh}$, it has to travel for a fair amount of steps outside of $\calU_{\alh}$, due to Lemma~\ref{faraway1}. This leads to a decrement on the survival probability. Such probability decrement, also growing in the number of ``exiting and re-entering'',  is sufficient to beat the enumeration  factor as well as the error factors accumulated when switching between $X_v$ and $\lambda_v$. 

In what follows, we carry out the proof in details following preceding discussions. 
We define stopping times 
$$a_0 = 0\mbox{ and }a_j =  \inf\{ t \geq b_{j-1}: S_t \not \in B_{R}(S_{b_{j-1}}) \mbox{ or }t = m\}\mbox{ for }j \geq 1\,,$$
$$b_j = \inf \{t\geq a_j: S_t  \in \calU_{\alh} \mbox{ or }t = m\}\mbox{ for }j \geq 0\,.$$ 
For all $j\geq 0$ we have $S_t \in \mathcal C_R(S_{b_{j}})$ for $t \in [b_j, a_{j+1} -1]$ and $S_t \not \in \calU_{\alh}$ for $t  \in [a_j, b_j -1]$ (see Figure \ref{fig:Est-eigv}).
\begin{figure}
	\includegraphics[width=3in]{Lemma}
			\begin{caption}
{The shaded region is $\calU_{\alh}$. Two balls are of radius $R$. The left ball is centered at $S_{b_1}$ and the right one is centered at $S_{b_0}$. Random walk stays in $B_{R}(S_{b_j})$ during  time $[b_j, a_{j+1} -1]$ and stays in $\calU_{\alh}^c$ during time $[a_j, b_j -1]$.}
\label{fig:Est-eigv}
\end{caption}
\end{figure}
 By Lemma \ref{faraway1}, we see that with $\P$-probability approaching 1 we have that $$a_{j} - b_{j-1} \geq R \mbox{ and } b_j - a_{j} \geq R-2k_n \mbox{ for }1 \leq j\leq L-1, $$
where $L = \inf\{j\geq 0:b_j \geq m\}$. We have $L \leq  m/R + 1$. We denote $\Theta_0 = \{(0,m)\}$ and for $1 \leq l \leq  m/R + 1$ define
\begin{align*}
  \Theta_l = \{&(x,y) \in \Z^{(l+1)}\times\Z^{(l+1)} : x_0 = 0,  x_l \leq y_l = m,\\ & x_j<y_{j}<x_{j+1}, y_j - x_j \geq R -2k_n \mbox{ for }j = 0,1,...,l-1 \}.
\end{align*}
Here $\Theta_l$ is the collection of all possible entrance times (to $\calU_{\alh}$) and exit times (from a ball of radius $R$ centered at the entrance point). Then a straightforward combinatorial computation gives that
\begin{align}
\label{eq:thetal}
|\Theta_l| &\leq  \binom{m}{2l} \leq m^{2 l}\,.
\end{align}

For any $m,l \geq 1$ and $(x,y) \in \Theta_l$, we get from \eqref{eq:eigv-1} and Lemma~\ref{lem-avoid-U-alpha} that
\begin{align*}
  &\Pr^v(\tau_{\ob \cup \tD_\lambda} > m, L = l, a_j = x_j, b_j =y_j \mbox{ for }1 \leq j \leq l) \\
  \leq &  \prod_{j=0}^l (2R)^{d} (p_{\alh})^{(y_j - x_j -1 )/k_n}  \lambda^{(x_j - y_{j-1} -1)}\\
  \leq & \lambda^{m} \prod_{j=0}^{l} (p_{\alh})^{-2/k_n}(2R)^{d}\Big(\frac{\log n \cdot  p_{\alh}}{p_{\alt}}\Big)^{(y_j - x_j)/k_n } \,.
\end{align*}
Note that $y_j - x_j \geq R -2k_n \geq k_n \log n$ for $j = 1,2,...,l-1$. Hence for large $n$ (recalling $R = k_n (\log n)^2$ and $p_{\alh} \geq \chi^{k_n/(\log n)^{2/d}}$ as in \eqref{palphalowerbound})
$$\Big(\frac{p_{\alt}}{ \log n \cdot p_{\alh}}\Big)^{(y_j - x_j)/k_n } \geq  n^{10} \mbox{ for }  1\leq j \leq l-1\,.$$
Therefore for $l \geq 2$ and sufficiently large $n$
$$ \Pr^v(\tau_{\ob \cup \tD_\lambda} > m, L = l, a_j = x_j, b_j =y_j \mbox{ for }0 \leq j \leq l) \leq \lambda^m n^{-7(l-1)}.$$
Summing over $l = 2, 3,\ldots, \lfloor m/R \rfloor +1$ and applying \eqref{eq:thetal}, we obtain that for $ m \leq n$
\begin{align}
  \Pr^v(\tau_{\ob \cup \tD_\lambda} > m, L \geq 2) &\leq \lambda^m n^{-4}\,.
   \label{eq-upper-1}
\end{align}
In addition, for $l = 1$ we have
\begin{equation}
  \label{eq-upper-2}
  \Pr^v(\tau_{\ob \cup \tD_\lambda} > m, L = 1) \leq \lambda^m ((p_{\alh})^{-2/k_n}(2R)^{d})^2 \leq  2(2R)^{2d} \lambda^m \,,
\end{equation}
and by Lemma~\ref{lem-avoid-U-alpha} we have
\begin{equation}
  \label{eq-upper-3}
  \Pr^v(\tau_{\ob \cup \tD_\lambda} > m, L = 0) \leq \Pr^v(\tau_{\ob \cup \calU_{\alh}} > m)\leq (2R)^{d/2} \lambda^m\,.
\end{equation}  
Combining \eqref{eq-upper-1}, \eqref{eq-upper-2} and \eqref{eq-upper-3} we completes the proof of the lemma.
\end{proof}

\subsection{Proof of Lemma \ref{firststep} and Proposition \ref{endpointloc}}
\begin{lemma}
\label{loop}
  The following holds with $\P$-probability tending to 1. For all $u, v, w\in B_{2n}(0)$ such that $u \leftrightarrow v, v \leftrightarrow w$ and for any positive number $t$ such that $t - |u-w|_1$ is even, we have
  \begin{equation}
    \Pr^u(S_t = w, \tau>t) \geq (2d)^{-\rho (|u-v|+|v-w|+ R)} \lambda_v^t\,.
  \end{equation}
\end{lemma}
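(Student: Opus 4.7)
The plan is to force the random walk to trace out explicit open paths $u \to v \to x^\star \to (\text{loop at } x^\star) \to v \to w$, where $x^\star \in \mathcal C_R(v)$ is a carefully chosen base point. The three open-path segments each contribute a factor of $(2d)^{-\text{length}}$ by the strong Markov property, while the loop segment at $x^\star$ produces the desired $\lambda_v^t$ factor via spectral decomposition.

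On the $\P$-probability-one event where Lemma~\ref{chemdis} holds (note that $R = k_n(\log n)^2 \geq (\log n)^3$, so the $(\log n)^3$ term there is absorbed into $R$), I extract open paths $\omega_1$ from $u$ to $v$, $\omega_2$ from $v$ to $w$, and $\omega_3$ from $v$ to $x^\star$, of lengths $\ell_1 \leq \rho(|u-v|+R)$, $\ell_2 \leq \rho(|v-w|+R)$, and $\ell_3 \leq \rho R$. Here $x^\star \in \mathcal C_R(v)$ is chosen to realize $\max \phi$, where $\phi$ is the positive principal Dirichlet eigenvector of the symmetric operator $P|_{\mathcal C_R(v)}$ normalized by $\|\phi\|_2 = 1$; $\ell^2$-normalization across the $|\mathcal C_R(v)| \leq (2R+1)^d$ vertices then yields $\phi(x^\star)^2 \geq (2R+1)^{-d}$.

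Next set $m = t - \ell_1 - 2\ell_3 - \ell_2$. The identity $\ell_1 + \ell_2 \equiv |u-v|_1 + |v-w|_1 \equiv |u-w|_1 \equiv t \pmod 2$ together with $2\ell_3$ being even forces $m$ to be even; for $t$ sufficiently large $m \geq 0$, and the residual small-$t$ regime can be handled by a direct open $u$-to-$w$ path of length $t$ (Lemma~\ref{chemdis} with back-and-forth detours to fix parity). Forcing the walk to traverse $\omega_1$, then $\omega_3$, then a length-$m$ trajectory confined to $\mathcal C_R(v)$ starting and ending at $x^\star$, then $\omega_3$ reversed, then $\omega_2$, the strong Markov property yields
\[
  \Pr^u(S_t = w, \tau > t) \;\geq\; (2d)^{-\ell_1 - 2\ell_3 - \ell_2} \, \Pr^{x^\star}\bigl(S_m = x^\star, \xi_{\mathcal C_R(v)} > m\bigr).
\]
Orthonormal spectral decomposition of the symmetric $P|_{\mathcal C_R(v)}$, together with $m$ even (so every $\lambda_i^m \geq 0$), gives
\[
  \Pr^{x^\star}\bigl(S_m = x^\star, \xi_{\mathcal C_R(v)} > m\bigr) \;=\; \sum_i \lambda_i^m \phi_i(x^\star)^2 \;\geq\; \lambda_v^m\,\phi(x^\star)^2 \;\geq\; (2R+1)^{-d}\lambda_v^m.
\]
Using $\lambda_v \leq 1$ to replace $\lambda_v^m$ by $\lambda_v^t$, and absorbing the polynomial factor $(2R+1)^{-d}$ together with $\ell_1 + 2\ell_3 + \ell_2 \leq \rho'(|u-v|+|v-w|+R)$ into a mildly enlarged constant $\rho$, completes the bound.

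The main obstacle is the loop lower bound. The naive attempt to loop at $v$ itself would require $\phi(v)^2$ to be at least polynomially large in $R$, which has no easy justification: iterating the eigenvalue equation as $\phi(v) \geq (2d\lambda_v)^{-k}\phi(x^\star)$ requires $k$ to be the chemical distance from $v$ to $x^\star$ \emph{within} $\mathcal C_R(v)$, whereas Lemma~\ref{chemdis} only controls distance in the full lattice, and a crude $|\mathcal C_R(v)|$-type bound would blow up to $(2d)^{-R^d}$. Anchoring the loop at the $\ell^2$-maximum $x^\star$ sidesteps this cleanly: normalization alone gives $\phi(x^\star)^2 \geq |\mathcal C_R(v)|^{-1}$, and the two detours $v \leftrightarrow x^\star$ cost only an additional $(2d)^{-O(R)}$ factor by Lemma~\ref{chemdis}, which is readily absorbed into the target exponent.
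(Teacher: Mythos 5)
Your construction is correct and, in essence, is the argument the paper intends: the paper dismisses this lemma in one line as an ``immediate consequence of Lemma~\ref{chemdis} and \eqref{eq:eigv-1}'', i.e.\ travel along chemical-distance paths and collect the factor $\lambda_v^t$ from a piece of trajectory confined to $\mathcal C_R(v)$. What you supply, and what the one-line proof glosses over, is the exact-time constraint $S_t=w$: \eqref{eq:eigv-1} only yields a starting point whose confined survival probability is at least $\lambda_v^m$, and says nothing about where the walk sits at time $m$, so one must either pigeonhole an endpoint $y$ and route $y\to v\to w$ with padding to make the total length exactly $t$, or, as you do, anchor a genuine loop at the $\ell^2$-maximum $x^\star$ of the principal eigenvector and use the spectral identity $\Pr^{x^\star}(S_m=x^\star,\ \xi_{\mathcal C_R(v)}>m)=\sum_i\lambda_i^m\phi_i(x^\star)^2\geq \lambda_v^m|\mathcal C_R(v)|^{-1}$ for even $m$ (symmetry of $P|_{\mathcal C_R(v)}$ plus nonnegativity of even powers). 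Your parity bookkeeping ($\ell_1+\ell_2\equiv|u-w|_1\equiv t$ and $2\ell_3$ even), the absorption of the polynomial loss $(2R+1)^{-d}$ and of the extra $O(R)$ path lengths into an enlarged constant $\rho$, and your observation that the detour $v\leftrightarrow x^\star$ must use a globally open path from Lemma~\ref{chemdis} rather than a path inside $\mathcal C_R(v)$ (whose intrinsic distance is not controlled) are all correct; the last point is the one real trap, and you sidestep it the right way.

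One caveat: your patch for the residual regime $m<0$ (forcing a direct open $u$-to-$w$ path of length exactly $t$, padded for parity) only works when $t\geq D(u,w)$. If $|u-w|_1\leq t<D(u,w)$ there is no open path of length $t$ at all, the left-hand side vanishes while the right-hand side is positive, and the inequality as stated is simply false; no patch can fix this. This is a defect of the lemma's formulation (in the paper it is only ever invoked with $t$ far exceeding $\rho(|u-v|+|v-w|+R)$, where your main construction applies, and the paper's own one-line proof ignores the issue entirely), not of your construction — but you should state the restriction $t\gtrsim \rho(|u-v|+|v-w|+R)$ (or $t\geq D(u,w)$ for the padded-path variant) rather than claim the small-$t$ case is handled.
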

\begin{proof}
This is an immediate consequence of Lemma \ref{chemdis} and \eqref{eq:eigv-1}.
\end{proof}

 \begin{proof}[Proof of Lemma \ref{firststep}]
We first see that reaching $\calU_0$ quickly and staying there afterwards gives a lower bound on $\Pr(\tau > n)$. By Lemmas~\ref{richsite} and \eqref{eq:chemdis}, there exists a site $v_f \in \calU_0$ such that $D(0,v_f) \leq \rho n/k_n.$ It follows from \eqref{eq:eigv-2} that $$\lambda_{v_f} \geq (X_{v_f}/(2R)^{d/2})^{1/k_n} \geq (p_0/(2R)^{d/2})^{1/k_n}\,.$$Then Lemma \ref{loop} implies 
\begin{equation}
\label{eq:lowerbd}
	\Pr(\tau > n) \geq (2d)^{-\rho (n/k_n+1 + R)} \lambda_{v_f}^{n/k_n} \geq  ((2d)^{-2\rho}p_0/(2R)^{d/2})^{n/k_n}.
\end{equation}
By Lemma~\ref{Est-eigv}, we get that 
$\Pr(\tau_{\loc1D\cup \ob}>n) \leq R^{3d}p_{\alo}^{n/k_n} $.
Altogether, we conclude that
\begin{equation*}
  \Pr(\tau_{\loc1D} > n \mid \tau > n) =\frac{\Pr(\tau_{\loc1D \cup\ob} > n)}{\Pr(\tau > n)}
 \leq \frac{R^{3d}p_{\alo}^{n/k_n} }{ ( (2d)^{-2\rho}p_0 (2R_n)^{-d/2})^{n/k_n}}
 = o(1). \qedhere
\end{equation*}
\end{proof}

\begin{proof}[Proof of Proposition \ref{endpointloc}]
Note that 
\begin{equation}\label{eq-decompose-prob-S-t}
\begin{split}
	&\Pr(S_{[\ft(\smt),n]} \not \subset B_{(\log n)^{\iota}k_n}(\smt) ,\tau > n)  \\
= \sum_{v\in \locV}&\Pr( \smt = v, S_{[T(v),n]} \not\subset B_{(\log n)^\iota k_n}(v), \tau>n)\,.
\end{split}
\end{equation}
Consider $v \in \locV\cap B_{n}(0)$. Since $T(v)$ is a stopping time for any fixed $v$, by strong Markov property, we have that 
\begin{align}\label{eq-decompose-T}
\Pr&(\smt = v, S_{[T(v),n]} \not\subset B_{(\log n)^{\iota}k_n}(v), \tau>n) \nonumber \\
  \leq &\Ex[\11_{\{\tau >n-T(v)\}}\Pr^{S_{T(v)}}(\xi_{B_{(\log n)^{\iota}k_n}(v)} \leq m, \tau_{\ob \cup \tD_{\lambda_v}}>m) |_{m = n-T(v)}]\,.
\end{align}
\begin{figure}
	\includegraphics[width=3in]{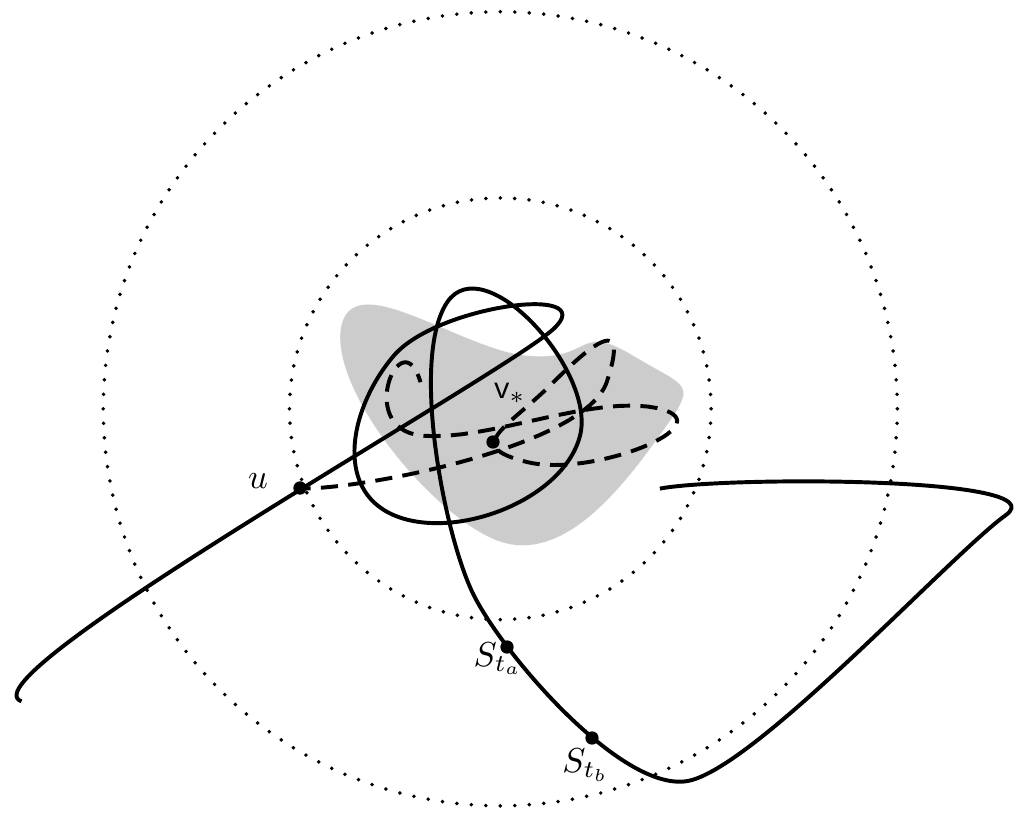}
		\begin{caption}
{The big ball is $B_{(\log n)^{\iota}k_n}(\smt)$ and the small ball is $B_{(\log n)^{\iota}}(\smt)$. If the random walk ever escapes the big ball (solid curve), then it must go through the annulus $B_{(\log n)^{\iota}k_n}(\smt) \setminus B_{3R}(\smt)$ during time $[t_a,t_b]$. But since survival probability in such annulus is very low, the random walk would prefer to stay in the ball (dotted curve).}
\label{fig:endpointloc}
\end{caption}
\end{figure}
We now bound the second term on the right hand side of \eqref{eq-decompose-T}. Suppose that the random walk escapes the ball $B_{(\log n)^{\iota}k_n}(\smt)$ during time $[\ft(v),n]$, then there exists a time interval $[t_a,t_b] \subset [\ft(v),n]$, such that $$t_b-t_a = \lceil (\log n)^{\iota}k_n/2 \rceil \text{ and } S_t \in B_{(\log n)^{\iota}k_n}(\smt) \setminus B_{3R}(\smt) \mbox{ for } t\in [t_a,t_b]$$
(see Figure~\ref{fig:endpointloc}).
 Since Corollary \ref{lqbzlands}~(1) implies that $(B_{(\log n)^{\iota}k_n}(\smt) \setminus B_{3R}(\smt)) \cap \calU_{\alh} = \varnothing$, we get $$S_t \in (\ob \cup \calU_{\alh})^c \mbox{ for } t\in [t_a,t_b]\,.$$ Therefore, for all $v \in \locV$, $u \in  \partial_{i} B_{(\log n)^\iota}(v) \cap \calC(0)$ and $m \in \N$,
\begin{align*}
\Pr^u&(\xi_{B_{(\log n)^{\iota}k_n}(v)} \leq m,\tau_{\ob \cup \tD_{\lambda_v}}>m) \\
& \leq \sum_{t_a = 0}^{m - \lceil (\log n)^{\iota}k_n/2 \rceil}\Pr^u(\tau_{\ob \cup \tD_{\lambda_v}}>m,S_{[t_a,t_b]}\subset (\ob \cup \calU_{\alh})^c)\,.
\end{align*}
Then we get from Lemmas \ref{lem-avoid-U-alpha} and \ref{Est-eigv} that
\begin{align*}
  \Pr^u(\tau_{\ob \cup \tD_{\lambda_v}}>m,S_{[t_a,t_b]}\subset (\ob \cup \calU_{\alh})^c) &\leq (2R)^{10d} \lambda_v^{m-2 -(t_b-t_a)} p_{\alh}^{(t_b-t_a)/k_n}\\
  &\leq (2R)^{10d} \lambda_v^{m-2} (\log n)^{-(\log n)^\iota/2} \,.
\end{align*}
Therefore, we deduce that
\begin{align*}
  \Pr^u&(\xi_{B_{(\log n)^{\iota}k_n}(v)} \leq m,\tau_{\ob \cup \tD_{\lambda_v}}>m) \\
  &\leq  (\log n)^{-(\log n)^\iota/4} (2d)^{-4\rho (\log n)^\iota}\lambda_v^{m}
   \leq (\log n)^{-(\log n)^\iota/4}\Pr^u(\tau>m) \,,
\end{align*}
where we have used the fact which follows from Lemma \ref{loop} that $$\Pr^u(\tau>m) \geq (2d)^{-4\rho (\log n)^\iota}\lambda_v^{m}, \quad \forall \ u \in \partial_{i} B_{(\log n)^\iota}(v) \cap \calC(0)\,.$$ Together with \eqref{eq-decompose-T}, we get
\begin{align*}
  &\Pr(\smt = v, S_{[T(v),n]} \not\subset B_{(\log n)^\iota k_n}(v), \tau>n) 
  \leq  (\log n)^{-(\log n)^\iota/4} \Pr( \tau >n)\,.
\end{align*}
Combined with \eqref{eq-decompose-prob-S-t}, this completes the proof of the proposition by summing over all $v\in \locV\cap B_{n}(0)$	.
\end{proof}

\section{Path localization} \label{pathlocalization}

This section is devoted to the proof of path localization. More precisely, we will show that conditioned on survival the amount of time  the random walk spends before getting close to the island in which it is eventually localized, is at most linear in the Euclidean distance from that island to the origin.

To this end, we consider the loop erasure for the random walk path, i.e., we consider the following unique decomposition for each path $\omega \in \K_{\Z^d}(0,u)$:
\begin{equation}
\label{eq:defLE}
  \omega = l_0 \oplus [\eta_0,\eta_1] \oplus l_1 \oplus[\eta_1,\eta_2]\oplus ...\oplus[\eta_{|\eta|-1},\eta_{|\eta|}] \oplus l_{|\eta|}
\end{equation}
where $l_i \in \K_{\Z^d \setminus\{\eta_0,...,\eta_{i-1} \}}(\eta_i,\eta_i)$ (recall \eqref{eq-def-K}) are the loops erased in chronological order and $\eta = [\eta_0,..,\eta_{|\eta|}]$ is the loop-erasure of $\omega$ denoted by $\eta = \eta(\omega)$ (see \cite[Chapter 9.5]{Lawler10} for more details on loop erasure). 
We first show in Lemma \ref{LoopErasureEst} that in a typical environment the survival probability for the random walk decays exponentially in the length of its loop erasure, which then implies that the loop erasure of the random walk path upon reaching the target island has at most a linear number of steps. 

In light of the preceding discussion, it remains to control the lengths of the erased loops which we consider in the following two cases. 
\begin{itemize}
\item For loops of lengths at most $k_n^{50d}$: we will first show that for a typical environment for majority of the vertices on any self-avoiding path, the survival probability for the random walk started at those vertices up to time $t\leq k_n^{50d}$ decays quickly in $t$ (Lemma \ref{mcqlwzj}); as a consequence we then show in Lemma \ref{nobigloop-1} that it is too costly for the small loops to have a total length super-linear in the length of the loop erasure $|\eta|$. 

\item For loops of lengths at least $k_n^{50d}$: we will first show in Lemma~\ref{eigvest} that except near the target island the random walk does not encounter any other vertex around which the principal eigenvalue is close to that of the target island; as a result we then show in Lemma~\ref{nobigloop-2} that it is too costly to have any big loop.
\end{itemize}

In the rest of the section, we carry out the details as outlined above. 
\begin{defn}
\label{def-Mt}
 Let $\mathcal{M}(t)$ be the collection of sites $v$ such that
\begin{equation}
	\Pr^v(\tau > t) \geq e^{-t/(\log t)^2}\,.
\end{equation}
In addition, we define
 \begin{equation}\label{eq-def-A-t}
A_t(\omega) = \{0\leq i \leq |\eta|: |l_i| = t,\eta_i \not \in \calM(t)\}\,.
\end{equation}
\end{defn}

\begin{lemma}\label{Mtprob}
  There exist positive constants $c_1,c_2$ depending only on $(d,\pe)$ such that for all $t \in \N$
  \begin{equation}
    \P(v \in \calM(t)) \leq c_1 e^{-c_2(\log t)^d}\,.
  \end{equation}
\end{lemma}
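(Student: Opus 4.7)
The plan is to show that the event $v \in \calM(t)$ requires the environment to contain an obstacle-free ball of radius $\asymp \log t$ near $v$, and then bound the probability of such a ball by a union bound. Since $\pe^{|B_{c\log t}|} = e^{-\Theta((\log t)^d)}$ matches the target bound, and an obstacle-free ball of this radius is the sufficient condition (by essentially the argument in Lemma~\ref{lem-lambdac}) for $v$ to achieve survival probability of order $e^{-t/(\log t)^2}$, the target bound is the ``right'' one.

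Concretely, I would set $R = \lceil t\rceil$ and $r_t = c_0 \log t$ for a small constant $c_0 = c_0(d,\pe) > 0$ to be chosen, and define $\mathcal{A} := \{\exists\, u \in B_R(v): B_{r_t}(u) \cap \ob = \varnothing\}$. A direct union bound yields
\[
\P(\mathcal{A}) \leq |B_R|\cdot \pe^{|B_{r_t}|} \leq (2t+1)^d e^{-|\log \pe|\,c_0^d (\log t)^d} \leq c_1 e^{-c_2 (\log t)^d}\,,
\]
using that $\log|B_R| = O(\log t) \ll (\log t)^d$. It therefore suffices to show $\{v \in \calM(t)\} \subset \mathcal{A}$ for all sufficiently large $t$ (small values of $t$ are covered trivially by choosing $c_1$ large enough).

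For this inclusion I would argue the contrapositive. On $\mathcal{A}^c$, obstacles are $r_t$-dense inside $B_R(v)$, and the goal is to prove a quantitative upper bound on the principal eigenvalue $\lambda$ of $P$ restricted to $\Omega := B_R(v) \setminus \ob$: namely $\lambda \leq 1 - c/(\log t)^2$ for some $c = c(d,\pe) > 0$. Once this bound is in hand, the spectral estimate of Lemma~\ref{eigv} combined with the standard displacement bound $\Pr^v(\xi_{B_R(v)} \leq t) \leq e^{-\Omega(t)}$ (valid for $R=t$) gives
\[
\Pr^v(\tau > t) \leq t^{d^2/2}(1 - c/r_t^2)^t + e^{-\Omega(t)} \leq t^{d^2/2}\exp\!\big(-ct/(c_0 \log t)^2\big) + e^{-\Omega(t)}\,,
\]
which is strictly less than $e^{-t/(\log t)^2}$ for $t$ large, provided $c_0$ is chosen so that $c/c_0^2 > 1$; this contradicts $v \in \calM(t)$.

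The main obstacle is therefore the eigenvalue bound $\lambda \leq 1 - c/(\log t)^2$ under the hypothesis that obstacles are $r_t$-dense in $B_R(v)$. In dimension $d=2$, a discrete Hardy--Poincar\'e inequality on cubes of side $\asymp r_t$ (each containing an obstacle by $\mathcal{A}^c$) gives $\|f\|_2^2 \lesssim r_t^2 \sum_{x\sim y}(f(x)-f(y))^2$ for any test function supported on $\Omega$, and the variational characterization then yields the eigenvalue bound. For $d \geq 3$ a naive pointwise path argument produces only $\|f\|_2^2 \lesssim r_t^d \sum(f(x)-f(y))^2$, so the $r_t$-density must be exploited more carefully (e.g.\ using that zeros are distributed throughout $\Omega$, not just one per cube, via a discrete capacity-type inequality) in order to recover the optimal $r_t^2$ scaling.
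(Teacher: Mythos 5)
There is a genuine gap, and it sits exactly at the step you flag as ``the main obstacle'': the claim that $r_t$-density of obstacles (every ball of radius $r_t=c_0\log t$ inside $B_t(v)$ meets $\ob$) forces the principal eigenvalue of $B_t(v)\setminus\ob$ down to $1-c/r_t^2$ is false for point obstacles, in every dimension $d\ge 2$. A single hard obstacle site in a region of linear size $r$ shifts the eigenvalue only by about $\mathrm{cap}(\mathrm{point})\cdot r^{-d}$, i.e.\ by $\asymp 1/(r^2\log r)$ when $d=2$ and $\asymp 1/r^{d}$ when $d\ge 3$ (the classical crushed-ice/capacity scaling; the harmonic test function around the obstacle shows the Hardy--Poincar\'e constant $r_t^2$ you invoke for $d=2$ is already off by a $\log r_t$ factor, and for $d\ge 3$ the ``optimal $r_t^2$ scaling'' you hope to recover simply does not hold). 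Concretely, place obstacles exactly on the sublattice $(c_0\log t)\Z^d$: then $\mathcal A$ fails, yet the quenched survival probability is at least of order $e^{-ct/(\log t)^{d}}$ for $d\ge3$ (resp.\ $e^{-ct/((\log t)^2\log\log t)}$ for $d=2$), which exceeds the threshold $e^{-t/(\log t)^2}$ for large $t$; so $v\in\calM(t)$ while $v\notin\mathcal A$, and the inclusion $\{v\in\calM(t)\}\subset\mathcal A$ is false. The deeper point is that $v\in\calM(t)$ does not require a completely obstacle-free ball of radius $\asymp\log t$, only a \emph{dilute} region of that scale (a ball of radius $C\log t$ containing $O((\log t)^{d-2})$ obstacles already has spectral gap $\lesssim(\log t)^{-2}$ in $d\ge3$), so a union bound over empty balls cannot capture the event. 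Even repairing the eigenvalue input forces $r_t\lesssim(\log t)^{2/d}$ in $d\ge3$, which would only yield a bound of order $e^{-c(\log t)^2}$, not the required $e^{-c(\log t)^d}$.

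For comparison, the paper's proof does not go through empty balls at all: it recycles the percolation-type estimate behind $c$-goodness (Lemmas~\ref{cgoodperco} and \ref{cgoodprob}), which says that a vertex with $\Pr^v(\tau>\lfloor(\log n)^{2/d}\rfloor)\ge c$ must sit in a cluster of $\Omega(\log n)$ rare $\epsilon$-fair boxes; a change of variables $(\log n)^{2/d}\leftrightarrow(\log t)^2$, i.e.\ $\log n\leftrightarrow(\log t)^d$, turns the bound $n^{-\delta}$ into $e^{-c_2(\log t)^d}$, and then a union bound over $K_t(v)$ plus iterating the $1/10$ survival bound over blocks of $\lfloor(\log t)^2\rfloor$ steps gives $\Pr^v(\tau>t)\le 10^{-t/(2(\log t)^2)}<e^{-t/(\log t)^2}$ on the complement. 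That coarse-grained, multi-box argument is precisely what handles the dilute configurations your approach misses; your Lemma~\ref{lem-lambdac}-style observation only establishes the (easy) sufficiency direction and explains why the exponent $(\log t)^d$ is of the right order, not why it is necessary.
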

\begin{proof}
By Lemmas \ref{cgoodperco} and  \ref{cgoodprob}, for $n \geq 2$
  \begin{equation*}
    \P(\Pr^v(\tau >\lfloor (\log n)^{2/d}\rfloor) \geq c) \leq n^{-\delta(c)}\,.
  \end{equation*}
By a change of variable, there exist constants $c_1,c_2$ depending only on $(d,\pe)$ such that for all $t \in \N$ 
  \begin{equation*}
    \P(\Pr^v(\tau >\lfloor (\log t)^2\rfloor) \geq 1/10) \leq c_1 e^{-c_2(\log t)^d}\,.
  \end{equation*}
Therefore, by a simple union bound we get that
\begin{align*}
\P&(\exists u \in K_{t}(v) \ s.t. \ \Pr^u(\tau >\lfloor (\log t)^2\rfloor) \geq 1/10) 
   \leq  c_3 \exp(-c_4(\log t)^d)\,,
\end{align*}
where $c_3,c_4$ are constants only depends on $(d,\pe)$.

On the event  $\{\Pr^u(\tau >\lfloor (\log t)^2\rfloor) \leq 1/10$ for all $ u \in K_t(v)\}$ (recall the definition in \eqref{eq:def-kbox}), for every $\lfloor (\log t)^2\rfloor$ steps the random walk has at most $1/10$ probability to survive. Thus, 
$$\Pr^{v}(\tau >t) \leq 10^{-t/(2(\log t)^2)}\,.$$ This completes the proof of the lemma.
\end{proof}

\begin{lemma}
\label{mcqlwzj}
  There exists a costant $t_1^* = t_1^*(d,\pe)$ such that the following holds with probability tending to one. 
  For all self-avoiding path $\swp$ started at origin with length $|\swp| \geq n (\log n)^{-100d^2}$and  $t \geq t_1^*$, 
  \begin{equation}
    |\swp\cap\calM(t)| \leq e^{- (\log t)^{3/2}} |\swp| \,.
  \end{equation}
\end{lemma}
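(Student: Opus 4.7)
The plan is to split into two regimes of $t$: for ``large'' $t$ I use a first-moment bound directly on $|\mathcal{M}(t) \cap B_{2n}(0)|$, whereas for ``moderate'' $t$ I bound $|\gamma \cap \mathcal{M}(t)|$ by counting defect clusters in a thin tube around $\gamma$.

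For $t \geq \exp(C(\log n)^{1/d})$ with $C = C(d,\pe)$ suitably large, Lemma~\ref{Mtprob} yields $\mathbb{E}|\mathcal{M}(t) \cap B_{2n}(0)| \leq (4n)^d c_1 e^{-c_2 (\log t)^d} \leq e^{-(\log t)^{3/2}} n (\log n)^{-100d^2}$; a union bound over this range of $t$ combined with Markov's inequality then gives $|\mathcal{M}(t) \cap B_{2n}(0)| \leq e^{-(\log t)^{3/2}} |\gamma|$ with probability tending to one, and the claim follows since $\gamma \subset B_{2n}(0)$.

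For $t_1^* \leq t \leq \exp(C(\log n)^{1/d})$, I extract from the proof of Lemma~\ref{Mtprob} the structural fact (obtained by rescaling Lemmas~\ref{cgoodperco}--\ref{cgoodprob} with $n' = e^{(\log t)^d}$) that every $v \in \mathcal{M}(t)$ lies within Euclidean distance $Ct$ of an $\epsilon$-fair cluster of size at least $l(\log t)^d$. Hence $|\gamma \cap \mathcal{M}(t)| \leq C_0 t^d N_\gamma(t)$ where $N_\gamma(t)$ counts such big clusters meeting the tube $\gamma^{+Ct}$. The tube has volume at most $|\gamma| \cdot |B_{Ct}| = O(|\gamma| t^d)$, the big-cluster indicators are independent at spatial separation $O(\log t)$ with per-vertex probability at most $e^{-\delta(\log t)^d}$, so choosing $t_1^* = t_1^*(d,\pe)$ large enough that $\delta(\log t_1^*)^d \geq 2d\log t_1^* + (\log t_1^*)^{3/2} + C_3$ makes $\mathbb{E}[C_0 t^d N_\gamma(t)] \leq \tfrac{1}{2} e^{-(\log t)^{3/2}} |\gamma|$ for every $t \geq t_1^*$. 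A Chernoff tail using the $\log t$-scale independence upgrades this expectation to a pointwise bound with stretched-exponential failure probability.

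The final step, and the main obstacle, is to make the above bound hold uniformly in~$\gamma$; the naive SAW count $(2d)^{|\gamma|}$ is too large to be beaten by a per-path Chernoff estimate directly. I plan to address this by observing that $N_\gamma(t)$ depends on $\gamma$ only through the coarsened footprint of $\gamma^{+Ct}$ at scale $\log t$, so that running the union bound at this coarser scale replaces the factor $(2d)^{|\gamma|}$ by a much smaller one, absorbed by the Chernoff rate $\exp(-c\, e^{-(\log t)^{3/2}} |\gamma|/t^d)$. A final union bound over $t \in [t_1^*, n]$ closes the argument.
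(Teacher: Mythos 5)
Your first regime, the structural fact you extract for moderate $t$ (every $v\in\calM(t)$ has, within distance $Ct$, an $\epsilon$-fair cluster of size at least $l(\log t)^d$, obtained by running Lemmas~\ref{cgoodperco}--\ref{cgoodprob} at scale $n'=e^{(\log t)^d}$ as in the proof of Lemma~\ref{Mtprob}), and the resulting first-moment bound are all sound, and they parallel the paper's proof (a minor bookkeeping point: the lemma puts no upper bound on $|\swp|$, so ``$\swp\subset B_{2n}(0)$'' is unjustified for very long paths; one has to work with $B_{|\swp|}(0)$ and sum over path lengths, as the paper in effect does). The genuine gap is exactly the step you flag as the main obstacle, and the fix you propose does not work quantitatively. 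The number of scale-$(\log t)$ footprints of self-avoiding paths of length $m$ from the origin is at least $\exp(c\,m/\log t)$ (choose a direction every $\sim\log t$ steps), while your per-footprint Chernoff rate is at best $\exp\bigl(-C\,m\,(\log t)^d e^{-(\log t)^{3/2}}t^{-d}\bigr)$ even after crediting the factor $\log(1/p)\asymp(\log t)^d$. Since $(\log t)^d t^{-d}\leq 1$, the exponent per unit length is at most $e^{-(\log t)^{3/2}}$, which is far smaller than $1/\log t$ for all $t$ beyond a fixed constant; so the entropy cannot be absorbed for any sufficiently large $t$, and your moderate window $[t_1^*,\exp(C(\log n)^{1/d})]$ is full of such $t$. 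Coarsening at scale $t$ instead (entropy $\exp(\Theta(m/t))$) fails even more badly, since you would need $(\log t)^d e^{-(\log t)^{3/2}}\gtrsim t^{d-1}$. Put differently, any single-scale union bound of this type can only force the number of marked coarse sites near $\swp$ down to order $m/(\ell(\log t)^d)$ at coarsening scale $\ell$, and after the $t^d$ multiplicity factor (one big cluster can serve up to order $t^d$ path vertices) the resulting bound on $|\swp\cap\calM(t)|$ is at least of order $m\,t^{d-1}(\log t)^{-d}$, which is vacuous; the target density $e^{-(\log t)^{3/2}}$ is super-polynomially small in $t$ and is simply out of reach of this argument.

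This is precisely the point where the paper brings in an external, genuinely multiscale input. It tiles $\Z^d$ by the $(2t+1)^d$ translated box lattices $\calV_i=i+(2t+1)\Z^d$, observes that the scale-$t$ footprint of a length-$m$ self-avoiding path is a lattice animal of size at most $3^d m/t$ in each $\calV_i$, and then applies the greedy lattice animal estimates of Lee and Martin to the i.i.d.\ Bernoulli field $\11_{\{x\in\calM(t)\}}$ with parameter $c_1e^{-c_2(\log t)^d}$ from Lemma~\ref{Mtprob}; the Bernoulli exponent $(\log t)^d$ with $d\geq 2$ is what ultimately beats the exponent $3/2$ (via an intermediate $5/3$). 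The greedy lattice animal theorem is exactly the device that handles densities far below the naive union-bound threshold, and its proof is a multiscale decomposition of animals into dyadic boxes; without importing it (or reproducing some comparable multiscale argument in place of your single-scale coarse-grained union bound), your final step does not go through, so the proposal as written has a real gap at its decisive point.
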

\begin{proof}
If $m>n (\log n)^{-100d^2}$ and $\log t \geq (\log m)^{5/9}$, Lemma \ref{Mtprob}  yieds
$$\P(\calM(t) \cap B_m(0) \not= \varnothing) \leq c_1 e^{- c_2(\log t)^{d} + d\log(2m)}\leq e^{- 2^{-1}c_2(\log t)^{d}} \,.$$
Hence, it suffices to prove that for large $t$ and $m$ such that $\log t \leq (\log m)^{5/9}$, we have
  \begin{equation}
  \label{eq:wjnyjdc}
    \P(\max_{\swp \in \calW_{\Z^d,m}(0)}|\swp \cap \calM(t)| \geq  e^{- (\log t)^{3/2}}m) \leq \exp\left(-\exp(- (\log t)^{7/4})m\right)\,,
  \end{equation}
  where $\calW_{\Z^d,m}(0)$ is the collection of self-avoiding path in $\Z^d$ of length $m$ (Note that $\sum_{t: \log t \leq (\log m)^{5/9}}\exp\left(-\exp(- (\log t)^{7/4})m\right)\leq  \exp\left(-\sqrt{m}\right)$ for large $m$).

To this end, we denote $\calV_i =i + (2t+1)\Z^d$ for $i \in \{1,2,...,(2t+1)\}^d$, where $\calV_i$ inherits the graph structure from the natural bijection which maps $v\in \Z^d$ to $i + (2t+1) v \in \calV_i$. Then events $\{x \in \calM(t)\}$ for $x \in \calV_i$ are independent. 
  For any self-avoiding path $\swp$, we know that $\{ x \in \calV_i: \swp \cap K_{t}(x) \not = \varnothing \}$ is a lattice animal (i.e., a connected subset) in $\calV_i$ of size at most $ 3^d |\swp|/t$. Combined with Lemma \ref{Mtprob} and a result on greedy lattice animals proved in \cite[Page 281]{Lee97} (see also \cite{Martin02}), this implies
  \begin{align*}
   & \P(\max_{\swp \in \calW_{\Z^d,m}(0)}|\swp \cap \calV_i \cap \calM(t)| \geq  \exp(- (\log t)^{5/3}) m)\\
     \leq &\P(\max_{\swp \in \calW_{\Z^d,m}(0)}| \{ x \in \calV_i \cap \calM(t): \swp \cap K_{t}(x) \not = \varnothing  \}| \geq \exp(- (\log t)^{5/3})m)\\
   \leq& \exp\left(-2^{-1}\exp(- (\log t)^{5/3})m\right)\,.
  \end{align*}
We complete the proof of \eqref{eq:wjnyjdc} by summing over $i \in \{1,2,...,(2t+1)\}^d$.
\end{proof}

\begin{lemma}
\label{LoopErasureEst}
There exist constants $c \in (0,1), c', r_0>0$ depending only on $(d,\pe)$ such that for any $r_1>r_0$, the following holds for with $\P$-probability at least $1- e^{-c'r_1}$. For all $r>r_1$ and $m \in \N$,
    \begin{equation}
    \Pr( |\eta(S_{[0,m]})|\geq r,\tau > m) \leq c^{r} \,.
  \end{equation}
\end{lemma}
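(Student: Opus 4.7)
The plan is to enumerate over self-avoiding paths that can appear as prefixes of the loop erasure and to bound each contribution using the structural information on the environment provided by Lemmas~\ref{Mtprob} and \ref{mcqlwzj}. I would start with the union bound
\[
\Pr(|\eta(S_{[0,m]})| \geq r, \tau > m) \leq \sum_{\bar\eta} \Pr(\eta(S_{[0,m]}) \supset \bar\eta, \tau > m),
\]
where the sum ranges over self-avoiding paths $\bar\eta = [\bar\eta_0,\ldots,\bar\eta_r]$ in $\Z^d$ starting at the origin. Using the definition of loop erasure and the strong Markov property at the last-visit times $T_j$ of each $\bar\eta_j$, the walk decomposes into ``new-vertex'' steps $\bar\eta_j \to \bar\eta_{j+1}$ of probability $(2d)^{-1}$ interspersed with loops at $\bar\eta_j$ that avoid $\{\bar\eta_0,\ldots,\bar\eta_{j-1}\} \cup \ob$. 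Summing over loop lengths then yields the factorization
\[
\Pr(\eta \supset \bar\eta, \tau > m) \leq (2d)^{-r} \prod_{j=0}^{r-1} G_j(\bar\eta_j),
\]
where $G_j(v) = \sum_{l\geq 0} \Pr^v(S_l = v, \tau_{\{\bar\eta_0,\ldots,\bar\eta_{j-1}\}} \wedge \tau > l)$ is a restricted Green's-function-like sum at $v$.

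Next I would control the $G_j$'s using the $\calM(t)$ filtration of Definition~\ref{def-Mt}. Setting $T^*(v) := \sup\{t \geq 0 : v \in \calM(t)\}$, the crude bound $G_j(v) \leq \sum_l \Pr^v(\tau > l)$ together with the definition of $\calM(t)$ and Lemma~\ref{Mtprob} gives $G_j(v) \leq T^*(v) + C_0$ for a universal $C_0 = \sum_{l \geq 1} e^{-l/(\log l)^2} < \infty$. Lemma~\ref{mcqlwzj} then implies that on a good environment event $E$---defined so that every self-avoiding path $\bar\eta$ of length at least $r_1$ rooted at the origin satisfies $|\bar\eta \cap \calM(t)| \leq e^{-(\log t)^{3/2}}|\bar\eta|$ at every scale $t \geq t^*_1$---the vertices with large $T^*$ along $\bar\eta$ are sparse at every dyadic scale of $t$, which, after summing the tail, gives $\prod_{j=0}^{r-1} G_j(\bar\eta_j) \leq \overline{G}^r$ uniformly in $\bar\eta$. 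The failure probability $\P(E^c) \leq e^{-c'r_1}$ follows from the super-polynomial tail bound in Lemma~\ref{Mtprob} combined with a greedy-lattice-animals-type union bound over starting points of self-avoiding paths, in the same spirit as the proof of Lemma~\ref{mcqlwzj}.

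Finally, I would sum over self-avoiding paths using the trivial combinatorial bound $c_r \leq (2d)(2d-1)^{r-1}$ on the number of length-$r$ self-avoiding paths from the origin, obtaining on the event $E$
\[
\Pr(|\eta| \geq r, \tau > m) \leq \frac{2d}{2d-1}\Big(\frac{(2d-1)\,\overline G}{2d}\Big)^r,
\]
which is at most $c^r$ for some $c \in (0,1)$ as soon as $(2d-1)\overline G < 2d$. The parameters in the definition of $E$ (the thresholds used in Lemmas~\ref{Mtprob} and~\ref{mcqlwzj}) are chosen precisely so that $\overline G$ is close enough to $1$ to make this hold.

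The main obstacle is quantitative: naive bounds on $G_j$ (such as the free-space Green's function of simple random walk on $\Z^d$) already exceed $2d/(2d-1)$ in low dimensions, so the environmental decay of the survival probability for $v \notin \calM(l)$ from Lemma~\ref{Mtprob} must be combined with the sparsity statement of Lemma~\ref{mcqlwzj} at every dyadic scale of $t$ in order to force $\overline G$ sufficiently close to $1$. A subsidiary difficulty is uniformizing the good event $E$ over all self-avoiding paths of all lengths $\geq r_1$ starting at the origin simultaneously, which requires summing the length-dependent $\P$-probability bounds into the single exponential $e^{-c'r_1}$.
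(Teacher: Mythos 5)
Your decomposition of the walk according to a prescribed loop-erasure prefix $\bar\eta$ and the resulting factorization $(2d)^{-r}\prod_j G_j(\bar\eta_j)$ are fine, but the argument breaks at the crucial quantitative step: you need the per-vertex Green factor $\overline G$ to satisfy $(2d-1)\overline G<2d$, i.e.\ $\overline G<1+\tfrac{1}{2d-1}$, and nothing in Lemmas~\ref{Mtprob} and \ref{mcqlwzj} can deliver this. Those lemmas only control how \emph{rare} vertices with atypically long survival (large $T^*$, membership in $\calM(t)$ for large $t$) are along a self-avoiding path; they say nothing about the \emph{typical} value of $G_j(v)$, which counts quick returns to $v$ before death and is not penalized at all by being outside $\calM(t)$. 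For a typical open vertex, $G_j(v)$ is a constant bounded away from $1$ depending on $(d,\pe)$, and for $\pe$ close to $1$ it exceeds $2d/(2d-1)$: in $d=2$ the obstacle-killed Green's function at a typical vertex diverges as $\pe\to1$ (the free two-dimensional walk is recurrent), so the return-before-death probability is close to $1$ and $\overline G$ is huge. Since the lemma must hold for all $\pe>p_c$, the entropy $ (2d-1)^{r}$ of self-avoiding prefixes cannot be beaten by the survival weights in this way; dyadic decomposition over scales of $t$ only tames the sparse vertices with large $T^*$, not the order-one contribution of typical vertices, so the ``main obstacle'' you flag is in fact fatal to this route.

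The paper's proof avoids path enumeration altogether, and this is the idea you are missing: one never needs to pay for the number of possible loop erasures, because the bound is on the probability of the single event $\{|\eta(S_{[0,m]})|\geq r,\ \tau>m\}$. On the good environment event of \eqref{eq:pathmc} (a consequence of the argument for Lemma~\ref{mcqlwzj}, holding with probability $\geq 1-e^{-c'r_1}$), the loop erasure itself is a self-avoiding path from the origin of length $\geq r$, so at most an $e^{-(\log C)^{3/2}}$ fraction of its vertices lies in $\calM(C)$; since $\eta(S_{[0,m]})\subset S_{[0,m]}$, the walk must therefore visit at least $(1-e^{-(\log C)^{3/2}})r$ distinct vertices outside $\calM(C)$. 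Defining stopping times $\zeta_i$ spaced at least $C$ apart at which $S_{\zeta_i}\notin\calM(C)$, a counting argument shows $\zeta_j\leq m$ for $j=\lfloor r/(2C)\rfloor$, and the strong Markov property gives a factor $e^{-C/(\log C)^2}$ per epoch, i.e.\ the bound $\bigl(e^{-C/(\log C)^2}\bigr)^{r/(2C)-2}=c^{r}$. The ``energy'' thus comes purely from the killing at the many mediocre vertices the walk is forced to visit, with no competing entropy term, which is exactly what your Green's-function-versus-connective-constant comparison cannot provide.
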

\begin{proof}

By \eqref{eq:wjnyjdc}, we see that there exist constants $C>e^{10}$ and $c',r_0>0$ depending only on $(d,\pe)$ such that for all $r_1 > r_0$, with $\P$-probability at least $1 - \exp\left(-c'r_1\right)$, for all self-avoiding path $\swp$ of length at least $r_1$,
  \begin{equation}
  \label{eq:pathmc}
    |\swp \cap \calM(C)| \leq e^{- (\log C)^{3/2}} |\gamma| \,.
  \end{equation}
We recursively define stopping times $\zeta_0 = 0$,
$$\zeta_i = \inf\{t > \zeta_{i-1}+C: S_t \not \in \calM(C)\}\,. $$
On the event $\{|\eta(S_{[0,m]})|\geq r\}$, since we assumed $r>r_1$, we know from \eqref{eq:pathmc} that $$|S_{[0,m]} \cap \calM(C)^c| \geq |\eta(S_{[0,m]} )\cap \calM(C)^c| \geq |\eta(S_{[0,m]})|(1 - e^{- (\log C)^{3/2}})\,.$$
Let $j = \lfloor r/(2C)\rfloor$. By definition of $\zeta_i$'s and $C>e^{10}$, $$|S_{[0,\zeta_{j}]}\cap \calM(C)^c |\leq (C+1)j + 1 \leq r\tfrac{C+1}{2C}+1 < |\eta(S_{[0,m]})|(1 - e^{- (\log C)^{3/2}})\,.$$ Therefore, we get $\zeta_{j} \leq m\,$. Then by strong Markov property,
\begin{align*}
	  \Pr(|\eta(S_{[0,m]})|\geq r,\tau > m)  &\leq \Pr ( S_{[\zeta_{m},\zeta_{m}+C]}\mbox{ is open }\forall 1 \leq m \leq j-1)\\
  & \leq \left[\exp(-C/(\log C)^2)\ \right]^{r/(2C)-2} \,,
\end{align*}
completing the proof of the lemma.
\end{proof}

\begin{lemma}
\label{nobigloop-1}
Recall definitions in \eqref{eq-def-K} and \eqref{eq-def-D}. There exists a constant $t_2^* = t_2^*(d,\pe)$ such that the following holds with $\P$-probability tending to one. For all $t_2^* \leq t \leq k_n^{50d}$, $u \in \bigcup_{v \in \locV} \left( \partial_{i} B_{(\log n)^\iota}(v) \cap \calC(v)\right)$ and $m \leq n$,
  \begin{equation}
  \Pr(\{\omega \in \K_{\ob^c,m}(0,u) :l_{|\eta|} =\varnothing,|A_t(\omega)| \geq |\eta| t^{-10}\}) \leq  e^{-n^{1/2}} \Pr(\K_{\ob^c,m}(0,u))\,.
  \end{equation}
\end{lemma}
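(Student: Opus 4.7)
The plan is a loop-stripping comparison. Decompose each random walk path via its loop erasure as in \eqref{eq:defLE}; for a self-avoiding $\eta=[\eta_0,\ldots,\eta_L]$ from $0$ to $u$, the probability of paths with loop erasure $\eta$ and loop lengths $(\ell_i)$ (subject to $\sum_i\ell_i=m-L$ and $\ell_L=0$) factorizes as $(2d)^{-L}\prod_i P_i(\ell_i)$, where
\[
P_i(\ell)\;:=\;\Pr^{\eta_i}\bigl(S_{[0,\ell]}\subset\ob^c\setminus\{\eta_0,\ldots,\eta_{i-1}\},\,S_\ell=\eta_i\bigr).
\]
The crucial input, from the definition of $\calM(t)$, is that $P_i(t)\le\Pr^{\eta_i}(\tau>t)\le e^{-t/(\log t)^2}$ for every $i\in B_\eta:=\{i:\eta_i\notin\calM(t)\}$, while $P_i(0)=1$ always.

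For each bad path with $|A_t(\omega)|\ge Lt^{-10}$, canonically pick the subset $S(\omega)\subset A_t(\omega)$ consisting of its first $s_0:=\lceil Lt^{-10}\rceil$ indices. A union bound over $S\subset B_\eta$ of size $s_0$ combined with the substitution $\ell_i=t\to 0$ for $i\in S$ (valid since $P_i(0)=1$) yields, for fixed $\eta$,
\[
\Pr(\text{bad},\,\text{loop-erasure}=\eta)\;\le\;\binom{|B_\eta|}{s_0}\,e^{-s_0 t/(\log t)^2}\,\Pr\bigl(\text{loop-erasure}=\eta,\,\text{length}=m-s_0 t,\,\ell_L=0\bigr).
\]
Since $\binom{|B_\eta|}{s_0}\le(et^{10})^{s_0}=e^{s_0\cdot O(\log t)}$, for $t\ge t_2^*$ large enough depending on $(d,\pe)$ the prefactor is $\le e^{-s_0 t/(2(\log t)^2)}$; summing over $\eta$ of length $L$ gives $\Pr(\text{bad},|\eta|=L)\le e^{-s_0 t/(2(\log t)^2)}\Pr(\K_{\ob^c,\,m-s_0 t}(0,u))$.

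To convert this into the desired ratio, I will apply the strong Markov property together with Lemma~\ref{loop}. Since $u$ lies within $(\log n)^\iota$ of some $v\in\locV$ with $\lambda_v\ge p_{\alh}^{1/k_n}\ge 1-O((\log n)^{-2/d})$ (by Corollary~\ref{lqbzlands}), inserting $s_0 t$ extra loops at $v$ gives
\[
\Pr(\K_{\ob^c,m}(0,u))\;\ge\;(2d)^{-\rho(2(\log n)^\iota+R)}\,\lambda_v^{s_0 t}\,\Pr(\K_{\ob^c,\,m-s_0 t}(0,u)),
\]
hence $\Pr(\K_{\ob^c,\,m-s_0 t}(0,u))\le e^{O(s_0 t(\log n)^{-2/d})+\mathrm{poly}(\log n)}\Pr(\K_{\ob^c,m}(0,u))$. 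For $t\le k_n^{50d}$ we have $(\log t)^2=O((\log\log n)^2)\ll(\log n)^{2/d}$, so the stripping exponent $-s_0 t/(2(\log t)^2)$ dominates both $O(s_0 t(\log n)^{-2/d})$ and the polylogarithmic overhead.

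Finally, $|\eta|\ge|u|\ge n(\log n)^{-100d^2}$ (since $u\in\partial_i B_{(\log n)^\iota}(v)$ and islands in $\locV$ are at distance $\ge n(\log n)^{-100d^2}$ from the origin by Theorem~\ref{thm-main}), so $s_0 t/(\log t)^2\ge L/(t^9(\log t)^2)\ge n/\mathrm{poly}(\log n)\gg n^{1/2}$. Summing over the at most $n$ possible values of $L$ gives $\Pr(\text{bad})\le e^{-n^{1/2}}\Pr(\K_{\ob^c,m}(0,u))$, and a further polynomial union bound over $t,u,m$ is absorbed. The main subtlety will be the length-coupling constraint $\sum_i\ell_i=m-L$, which couples loops non-trivially; the Markov-padding step resolves this by directly relating lengths $m-s_0 t$ and $m$ through Lemma~\ref{loop}. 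Lemma~\ref{mcqlwzj} is used implicitly to ensure $|B_\eta|\ge L/2$ for all sufficiently long self-avoiding paths, so that the binomial coefficient is meaningful and the union bound over $S\subset B_\eta$ of size $s_0$ captures every bad configuration.
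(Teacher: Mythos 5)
Your proposal is correct and follows essentially the same route as the paper's proof: strip the length-$t$ loops rooted at vertices outside $\calM(t)$ (paying a combinatorial factor of order $(et^{10})^{s_0}$, beaten by the $e^{-s_0 t/(\log t)^2}$ gain), then restore the removed length by a loop near $u$ via Lemma~\ref{loop} and Corollary~\ref{lqbzlands}(3), using $|\eta|\gtrsim n(\log n)^{-100d^2}$ from Lemma~\ref{Emap} to reach $e^{-n^{1/2}}$. The only differences are bookkeeping ones (you remove exactly $s_0$ loops and union bound over their positions, and pad at the level of $\K_{\ob^c,m-s_0t}(0,u)$ rather than per stripped path $\gamma$), and your invocation of Lemma~\ref{mcqlwzj} is unnecessary since $A_t(\omega)\subset B_\eta$ automatically.
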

\begin{proof}
For any $\omega$,  we denote \begin{equation}
      \phi(\omega) = \tilde{l}_0 \oplus [\eta_0,\eta_1] \oplus \tilde{l}_1 \oplus[\eta_1,\eta_2]\oplus ...\oplus[\eta_{|\eta|-1},\eta_{|\eta|}] \oplus \tilde{l}_{|\eta|}
  \end{equation}
  where $\tilde{l_i} = l_i$ if $i\notin A_t(\omega)$ and $\tilde l_i = \varnothing$ otherwise. Note that for any $\omega \in \K_{\ob^c,m}(0,u)$ such that $|A_t(\omega)| \geq |\eta| t^{-10}$, $$m -|\phi(\omega)| =t |A_t(\omega)|\geq |\eta|t^{-9}\,.$$ We consider every $\gamma \in \phi( \K_{\ob^c,m}(0,u))$ such that $m -|\gamma| \geq |\eta|t^{-9}$. For large $t$, since $\eta_i \not \in \calM(t)$ for $i \in A_t(\omega)$ and $|\{ i : \tilde l_i = \varnothing\}| \leq |\eta|$, we have
\begin{align*}
    \Pr(\{\omega \in \K_{\ob^c,m}(0,u) : \phi(\omega) = \gamma,l_{|\eta|} =\varnothing\})
    &\leq \Pr(\gamma)\binom{|\eta|}{\frac{m-|\gamma|}{t}}e^{-t(\log t)^{-2}\tfrac{m-|\gamma|}{t}}\\
    & \leq \Pr(\gamma) e^{-(m-|\gamma|)(\log t)^{-3}} \,.
  \end{align*}
In the last inequality, we used the fact that $$(\tfrac{m-|\gamma|}{t})! \geq (\tfrac{m-|\gamma|}{et})^{\tfrac{m-|\gamma|}{t}} \geq (|\eta|t^{-10}e^{-1})^{\tfrac{m-|\gamma|}{t}}\,.$$ In addition, it follows from Lemma \ref{loop} and Corollary \ref{lqbzlands}(3) that
\begin{equation}
\label{eq:loopu}
  \Pr(\K_{\ob^c, m - |\gamma|}(u,u)) \geq (2d)^{-7\rho (\log n)^\iota} e^{-\chi(m-|\gamma|)(\log n)^{-2/d}}\,.
\end{equation}
Note that $(\log t)^3 \leq (\log k_n^{50d}) \lesssim (\log\log n)^3 = o((\log n)^{2/d})$ and by Lemma \ref{Emap} we have $m - |\gamma| \geq |\eta|t^{-9} \geq n (\log n)^{-2000d^2} $. Therefore
\begin{align*}
  \Pr&(\{\omega \in \K_{\ob^c,m}(0,u) : \phi(\omega) = \gamma,l_{|\eta|} =\varnothing\})\\
          \leq& \Pr(\gamma\oplus \K_{\ob^c, m - |\gamma|}(u,u))e^{-2^{-1}|\eta|t^{-10}}\,.
  \end{align*}
We complete the proof of the lemma by summing over all such $\gamma$'s (where the pre-factor of $e^{-\sqrt{n}}$ is a crude bound with room to spare).
\end{proof}

\begin{lemma}
\label{eigvest}
Recall the definition of $\smt$ and $\ft(\smt)$ as in \eqref{eq-def-T}. 
For constant $q>0$, let $U(t) = \cup_{i = 0}^t B_{(\log n)^q}(S_i) \cap \mathcal C(0)$.
Conditioned on the event that the origin is in an infinite open cluster, we have that 
 \begin{equation}\label{eq-eigvest}
	  \Pr( U(n)\setminus B_{3R}(\smt) \subset \tD^c_{(1 - k_n^{-20d}) \lambda_{\smt}} \mid\tau > n) \to 1 \quad\mbox{ in } \P\mbox{-probability}.
 \end{equation} 
 \end{lemma}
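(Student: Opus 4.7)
The plan is to upper bound $\Pr(E \cap \{\tau > n\})$, where $E := \{\exists u \in U(n) \setminus B_{3R}(\smt) : \lambda_u > \lambda^*\}$ with $\lambda^* := (1 - k_n^{-20d}) \lambda_{\smt}$, and then compare with a lower bound on $\Pr(\tau > n)$. My first move would be to force a geometric separation. Given $\lambda_u > \lambda^*$, Lemma~\ref{eigv} yields some $x \in B_R(u)$ with $X_x \geq (\lambda^*)^{k_n}$; using $(1 - k_n^{-20d})^{k_n} \geq 1 - k_n^{1 - 20d}$ and $\lambda_{\smt}^{k_n} \geq p_{\alt}$, for $n$ large this gives $X_x \geq p_{\alh}$, so $x \in \calU_{\alh}$ and $u \in \tD_{p_{\alh}^{1/k_n}}$. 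Since $\smt \in \locV \subset \tD_{p_{\alh}^{1/k_n}}$ as well and $u \notin B_{3R}(\smt)$, Corollary~\ref{lqbzlands}(1) forces $|u - \smt| \geq n k_n^{-14d}$. Thus on $E$ the walker must visit two widely separated regions, $B_{3R}(\smt)$ and $B_{R + (\log n)^q}(x)$.

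Next I would union bound over pairs $(v_0, x) \in (\locV \cap B_{2n}(0)) \times (\calU_{\alh} \cap B_{2n}(0))$ with $|v_0 - x| \geq n k_n^{-14d} - R$; by Corollary~\ref{lqbzlands}(2) and Lemma~\ref{goodsiteprob} combined with Markov's inequality, the number of such pairs is polylogarithmic in $n$ with high $\P$-probability. For a fixed pair the walker's trajectory contains a subinterval $[t_a, t_b]$ of length at least $\tfrac{1}{4} n k_n^{-14d}$ during which $S_t$ lies outside $B_{n k_n^{-14d}/4}(v_0) \cup B_{n k_n^{-14d}/4}(x)$; by Corollary~\ref{lqbzlands}(1) applied separately to $v_0$ and $x$, this middle region is disjoint from $\tD_{p_{\alh}^{1/k_n}} \supset \tD_{\lambda^*}$, so Lemma~\ref{Est-eigv} with $\lambda = \lambda^*$ bounds the middle-portion survival by $R^{3d} (\lambda^*)^{t_b - t_a}$. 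A further application of Lemma~\ref{Est-eigv} (or Lemma~\ref{lem-avoid-U-alpha}) on the endpoint portions of the path, where the relevant local eigenvalues are at most $\lambda_{v_0}$ by Lemma~\ref{Emap}, yields a combined per-pair bound of order $R^{O(d)} (\lambda^*)^{t_b - t_a} \lambda_{v_0}^{n - (t_b - t_a)}$.

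Finally, dividing by the lower bound $\Pr(\tau > n) \geq (2d)^{-O(n/k_n)} \lambda_{v^*}^n$ obtained from Lemmas \ref{richsite} and \ref{loop} (with $v^* \in \locV$ achieving the maximum $\lambda$, so that $\lambda_{v_0} \leq \lambda_{v^*}$), the per-pair ratio reduces to $(\lambda^*/\lambda_{v^*})^{t_b - t_a} \leq (1 - k_n^{-20d})^{n k_n^{-14d}/4} \leq e^{-\Omega(n k_n^{-34d})}$, which decays faster than any polylogarithmic factor and hence beats the polylogarithmically many summands.

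The main obstacle I anticipate is the endpoint bound: a trivial bound of one on each endpoint portion is too loose by a factor $\lambda_{v^*}^{-n}$, so one must squeeze out the correct eigenvalue rate on the near-$v_0$ and near-$x$ segments via a second use of Lemma~\ref{Est-eigv}, exploiting Lemma~\ref{Emap}'s bound on the principal eigenvalue near $\locV$-points. A further subtlety is that $\lambda^*$ depends on the random $\smt$, handled via the union bound over $\smt = v_0 \in \locV$ just described.
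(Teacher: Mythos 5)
There is a genuine gap at the heart of your argument, in step (iii). Corollary~\ref{lqbzlands}(1) only tells you that the \emph{punctured neighborhoods} $B_{nk_n^{-14d}}(v_0)\setminus B_{3R}(v_0)$ and $B_{nk_n^{-14d}}(x)\setminus B_{3R}(x)$ of the two specific points $v_0,x$ contain no point of $\calU_{\alh}\cup\tD_{p_{\alh}^{1/k_n}}$; it says nothing about the complement of the two balls $B_{nk_n^{-14d}/4}(v_0)\cup B_{nk_n^{-14d}/4}(x)$, which typically contains many \emph{other} components of $\tD_{p_{\alh}^{1/k_n}}$ (indeed $\locD\cap B_{2n}(0)$ has poly-logarithmically many islands scattered at mutual distance $\geq nk_n^{-14d}$). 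On the event $\{\smt=v_0\}$ the walk is only known to avoid $\tD_{\lambda_{v_0}}$, not $\tD_{\lambda^*}$: it may spend most of your crossing interval $[t_a,t_b]$ hugging a third island $w$ with $\lambda_w\in(\lambda^*,\lambda_{v_0}]$, which is perfectly consistent with the event you are bounding (such a $w$ merely supplies another witness $u$ for $E$). Hence Lemma~\ref{Est-eigv} with $\lambda=\lambda^*$ cannot be applied to the middle portion, and the per-pair bound $R^{O(d)}(\lambda^*)^{t_b-t_a}\lambda_{v_0}^{\,n-(t_b-t_a)}$ is unjustified. Moreover, even granting that bound, the final comparison fails quantitatively: your gain is only $(\lambda^*/\lambda_{v_0})^{t_b-t_a}\leq e^{-O(nk_n^{-34d})}$, whereas dividing by the lower bound \eqref{eq:lowerbd} costs a slack of at least $\big((2d)^{2\rho}(2R)^{d/2}\big)^{n/k_n}=e^{\Theta(n\log\log n/k_n)}$ (and any path-construction lower bound must at least pay the $(2d)^{-c|v_0|}$ travel cost with $|v_0|\geq nk_n^{-14d}$), while your eigenvalue upper bounds on the endpoint portions carry no matching discount; since $n/k_n\gg nk_n^{-14d}\gg nk_n^{-34d}$, the slack swamps the gain and the ratio diverges instead of tending to $0$.

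The paper's proof avoids both problems by a local, last-exit comparison. It sets $b=m$ to be the last visit to $B_{2(\log n)^q}(x)$; on the event of Lemma~\ref{firststep} and Proposition~\ref{endpointloc} the walk must then move toward $B_{(\log n)^{\iota}k_n}(\smt)$, hence spends at least $nk_n^{-15d}$ steps in the punctured neighborhood of $x$ which \emph{is} free of $\calU_{\alh}$ by Corollary~\ref{lqbzlands}(1) applied to $x$ alone; by Lemma~\ref{lem-avoid-U-alpha} this stretch costs at most $p_{\alh}^{\lfloor nk_n^{-15d}\rfloor/k_n}$, i.e.\ a factor $(\log n)^{-\Theta(nk_n^{-15d-1})}$ below the $\lambda_{\smt}$ rate, and the remaining time is bounded via Lemma~\ref{Est-eigv} at rate $\lambda_{\smt}$. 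This is compared with the alternative continuation in which, from the \emph{same} prefix $\Pr(S_m\in\partial_{i}B_{2(\log n)^q}(x),\tau>m)$, the walk simply stays near $x$ at rate $\lambda_x^{\,n-m}$ (Lemma~\ref{loop}); the common prefix cancels, so no global $e^{\Theta(n/k_n)}$ slack ever enters, and the only loss $(\lambda_{\smt}/\lambda_x)^{n}\leq e^{nk_n^{-20d}}$ is negligible against the $(\log n)^{-\Theta(nk_n^{-15d-1})}$ gain. The two missing ingredients in your proposal are exactly these: the source of the gain must be the $\calU_{\alh}$-free annulus around $x$ traversed right after the last visit (not the tiny eigenvalue gap $k_n^{-20d}$ over the crossing), and the comparison must be made pathwise with a shared past rather than against a global lower bound on $\Pr(\tau>n)$.
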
 
 \begin{remark}
 For purpose of the present article, it suffices to take $U(t) = \cup_{i = 0}^t S_i$; we strengthened the lemma as it may be useful for future application. 
 \end{remark}
\begin{proof}[Proof of Lemma~\ref{eigvest}]
We start with a brief description on the intuition behind \eqref{eq-eigvest}. 
If the random walk hits some local region with the principal eigenvalue close to that near $\smt$ (which is the presumed target island) before time $\ft(\smt)$, then the random walk tends to stay around this local region as opposed to travel all the way to the presumed target island --- since by Lemma~\ref{faraway1} the regions with large principal eigenvalues are far away from each other and thus it is costly for the random walk to travel from one to the other.

Let $a = \inf\{t\geq 0:  \max_{u \in U(t) \setminus B_{3R}(\smt)} \lambda_u   > (1- k_n^{-20d}) \lambda_{\smt}\}$.  Then there exists $x \in \big(B_{2(\log n)^q}(S_a)\cap \calC(0)\big) \setminus B_{3R}(\smt)$ such that and $\lambda_x \geq (1 - k_n^{-20d}) \lambda_\smt$. 
\begin{figure}
	\includegraphics[width=3in]{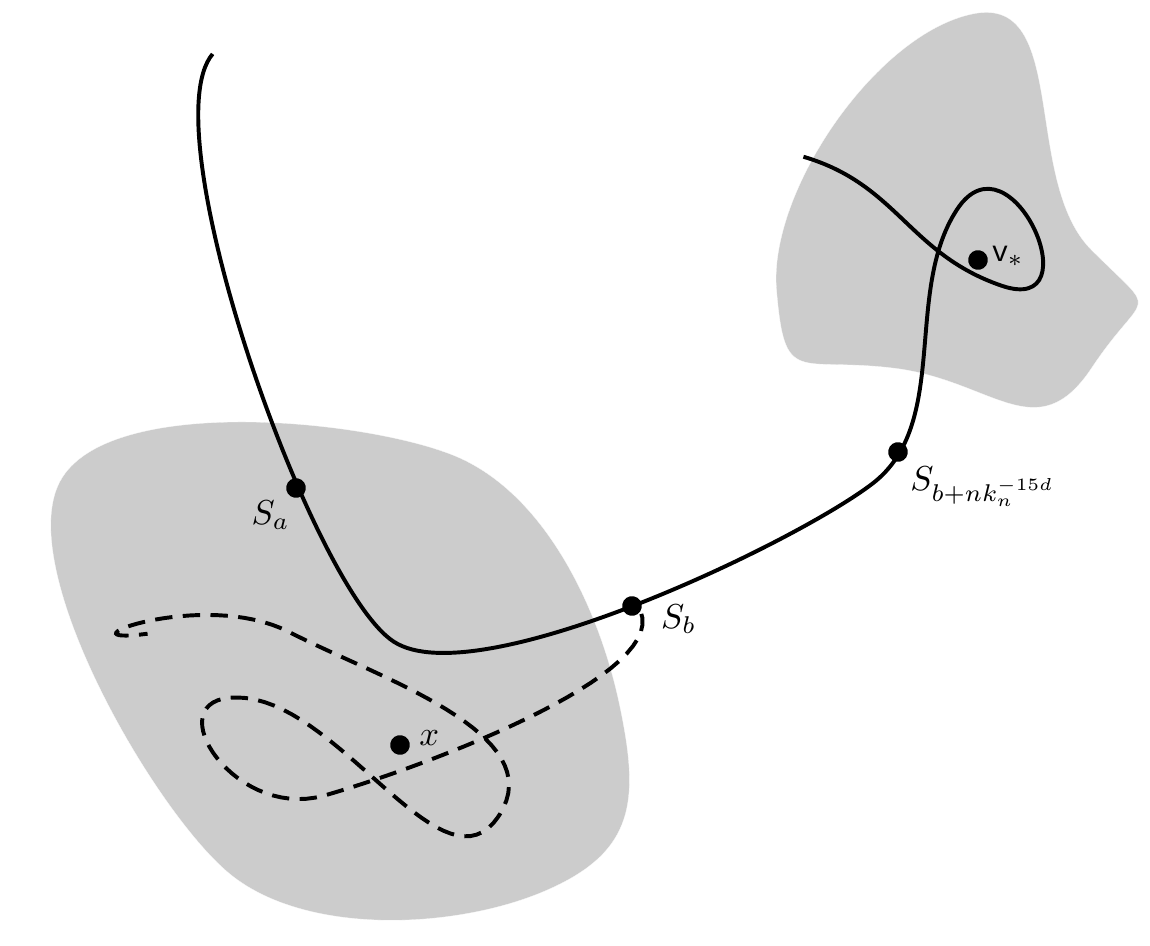}
		\begin{caption}
{The random walk would prefer staying in the neighborhood of $x$ (dotted curve) to going ahead to $\smt$ (solid curve), since survival probability during $[b,b+nk_n^{-15d}]$ is very low. }
\label{fig:eigvest}
\end{caption}
\end{figure}
We restrict to the event $\{\tau_{\locD}<n,S_{[\ft(\smt),n]} \subset B_{(\log n)^{\iota}k_n}(\smt), a<n \}$. Hence, we have$$\lambda_x \geq (1- k_n^{-20d}) \lambda_{\smt}\geq (1- k_n^{-20d}) p_{\alt}^{1/k_n}\geq p_{\alh}^{1/k_n}\,.$$ 
Since $|x - \smt| \geq 3R$, Corollary \ref{lqbzlands}~(1) yields $$|x - \smt| \geq nk_n^{-14d} \ and \ (B_{nk_n^{-14d}}(x) \setminus B_{3R}(x) )\cap \calU_{\alh} = \varnothing \,.$$
 Let $b = \sup\{t\leq n: S_t \in B_{2(\log n)^q}(x) \}$. Then $b+  nk_n^{-15d} \leq n$ and $$S_{[b,b+ nk_n^{-15d}]} \not \in \calU_{\alh}, \  and \  S_{[b+  nk_n^{-15d},n]} \not\in \tD_{\lambda_{\smt}}\,.$$
For any $v \in \locV$ and $x$ such that $\lambda_x \geq (1 - k_n^{-20d}) \lambda_v$, we deduce from the Markov property  and Lemmas~ \ref{lem-avoid-U-alpha}, \ref{Est-eigv} that
   \begin{align}
     \label{eq-seceig-upper}
    & \Pr(b=m,\smt = v,S_a \in B_{2(\log n)^q}(x),\tau>n, a \leq n, S_{[\ft(\smt),n]} \subset B_{(\log n)^{\iota}k_n}(\smt) ) \nonumber \\
         \leq&\Pr(S_m \in \partial_{i} B_{2(\log n)^q}(x),\tau>m)(2R)^{4d} p_{\alh}^{\lfloor nk_n^{-15d} \rfloor/k_n}\lambda_{v}^{n - m- \lfloor nk_n^{-15d} \rfloor}\nonumber\\
        \leq &\Pr(S_m \in \partial_{i} B_{2(\log n)^q}(x),\tau>m)(2R)^{4d}(\log n)^{- \lfloor nk_n^{-15d} \rfloor}\lambda_{v}^{n - m}\,.
   \end{align}

Next, we give a lower bound on survival probability. 
By Lemma \ref{loop}, $\Pr^u(\tau > n-m) \geq (2d)^{-10\rho(\log n)^{q}}\lambda_{x}^{n-m}$. Hence
\begin{align*}
  \Pr(\tau>n)& \geq \Pr(S_m \in \partial_{i} B_{2(\log n)^q}(x),\tau>n)\\
              & \geq \Pr(S_m \in \partial_{i} B_{2(\log n)^q}(x),\tau>m) (2d)^{-10\rho(\log n)^{q}}\lambda_{x}^{n-m}\,.
\end{align*}
Combined with \eqref{eq-seceig-upper} and $\lambda_x \geq (1 - k_n^{-20d}) \lambda_v$, since 
$$(\lambda_v /\lambda_x)^{n-m} \leq (\lambda_v /\lambda_x)^{n} \leq \exp(-nk_n^{-20d})\,, $$
 it yields that 
\begin{align*}
  \Pr(&b=m,\smt = v,S_a \in B_{2(\log n)^q}(x), 
  a \leq n,S_{[\ft(\smt),n]} \subset B_{(\log n)^{\iota}k_n}(\smt), \tau > n) \\& \leq e^{- nk_n^{-16d}}\Pr(\tau>n)\,.
\end{align*}
Summing over $0\leq m \leq n$, $v \in \locV$ and $x \in B_{n}(0)$ such that $\lambda_x \geq (1 - k_n^{-20d}) \lambda_v$, we complete the proof  by Lemma \ref{firststep} and Proposition \ref{endpointloc}.
\end{proof}

\begin{lemma}
\label{nobigloop-2}
For $u \in  \partial_{i} B_{(\log n)^\iota}(v) \cap \calC(v)$ and $\lambda = (1-k_n^{-20d})\lambda_v$ for some $v \in \locV$,  we have that for all $m \leq n$
  \begin{equation}
  \Pr(\{\omega \in \K_{(\tD_\lambda \cup\ob)^c,m}(0,u) :l_{|\eta|} =\varnothing, \max_i|l_i| \geq k_n^{50d}\}) \leq e^{-k_n^{20d}}\Pr(\K_{\ob^c,m}(0,u)) \,.
  \end{equation}
\end{lemma}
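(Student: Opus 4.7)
The plan is to mirror the proof of Lemma~\ref{nobigloop-1}, substituting Lemma~\ref{Est-eigv} for the role played by $\calM(t)$ to control the SRW cost of a large loop. I would define an erasure map
\[
\phi(\omega) = \tilde l_0 \oplus [\eta_0,\eta_1] \oplus \cdots \oplus \tilde l_{|\eta|},
\]
where $\tilde l_i = l_i$ if $|l_i| < k_n^{50d}$ and $\tilde l_i = \varnothing$ otherwise; for $\omega$ in the bad event one has $m - |\phi(\omega)| \geq k_n^{50d}$. Given $\gamma := \phi(\omega)$, the bad preimages are obtained by choosing positions $0 \leq i_1 < \cdots < i_j \leq |\eta| - 1$ where $\gamma$ has $\tilde l_{i_r} = \varnothing$ (position $|\eta|$ is excluded because $l_{|\eta|} = \varnothing$), choosing lengths $t_1,\ldots,t_j \geq k_n^{50d}$ summing to $m - |\gamma|$, and inserting a length-$t_r$ loop based at $\eta_{i_r}$.

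Each inserted loop is constrained to $(\tD_\lambda \cup \ob)^c$, and since $\lambda = (1 - k_n^{-20d})\lambda_v > (p_{\alt}/\log n)^{1/k_n}$ for large $n$ (using $\lambda_v \geq p_{\alt}^{1/k_n}$ for $v \in \locV$ together with $1 - k_n^{-20d} \geq (\log n)^{-1/k_n}$), Lemma~\ref{Est-eigv} bounds the aggregate SRW weight of valid loops of length $t_r$ at $\eta_{i_r}$ by $R^{3d}\lambda^{t_r}$. Summing over the position choices ($\binom{|\eta|+1}{j}$) and length partitions ($\binom{m-|\gamma|}{j-1}$), using $j \leq (m-|\gamma|)/k_n^{50d}$ and $m^2 R^{3d} \leq n^3$, I get
\[
\Pr\bigl(\phi^{-1}(\gamma) \cap \{\textrm{bad}\}\bigr) \leq \Pr(\gamma)\, \lambda^{m-|\gamma|}\, \exp\!\bigl(c(\log n)(m-|\gamma|)/k_n^{50d}\bigr)
\]
for some constant $c$. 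To convert this into the advertised form, I would compare to $\Pr(\gamma \oplus \K_{\ob^c, m-|\gamma|}(u,u))$, whose loop-at-$u$ factor is lower-bounded via Lemma~\ref{loop} by $(2d)^{-2\rho R}\lambda_v^{m-|\gamma|}$ (valid because $u \in \partial_i B_{(\log n)^\iota}(v) \cap \calC(v)$ gives $2|u-v|+R \leq 2R$ for large $n$). The crucial gain is
\[
(\lambda/\lambda_v)^{m-|\gamma|} = (1 - k_n^{-20d})^{m-|\gamma|} \leq \exp\!\bigl(-k_n^{-20d}(m-|\gamma|)\bigr),
\]
and since $m-|\gamma| \geq k_n^{50d}$, the product of this ratio with the polynomial loss $\exp(c(\log n)(m-|\gamma|)/k_n^{50d})$ and the prefactors $(2d)^{2\rho R} = e^{O(k_n(\log n)^2)}$ and $n$ (the latter from a crude multiplicity bound when collapsing $\bigcup_\gamma\, \gamma \oplus \K_{\ob^c, m-|\gamma|}(u,u) \subset \K_{\ob^c, m}(0,u)$) is bounded by $\exp(-k_n^{20d})$ for large $n$.

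The main obstacle I anticipate is arithmetic bookkeeping: the per-step exponential gain $k_n^{-20d}$ is small and must accumulate over at least $k_n^{50d}$ steps to dominate both the combinatorial prefactor $m^{2j} R^{3dj}$ and the geometric prefactor $(2d)^{2\rho R} = e^{O(k_n(\log n)^2)}$. For $d \geq 2$, the resulting exponent $k_n^{30d}$ dominates both $\log n$ and $R = k_n(\log n)^2$ with ample margin, which explains why the threshold $k_n^{50d}$ (much larger than the accumulated loss rate) suffices. A minor subtlety is that each inserted loop is further restricted away from $\{\eta_0,\ldots,\eta_{i_r-1}\}$; this only shrinks the admissible set so the Lemma~\ref{Est-eigv} estimate remains valid as an upper bound, and the parity constraint $m - |\gamma|$ even is automatic because both $m$ and $|\gamma|$ have the parity of $|0-u|_1$.
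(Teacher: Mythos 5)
Your proposal follows essentially the same route as the paper's proof: the same erasure map $\phi$ deleting loops of length at least $k_n^{50d}$, the same use of Lemma~\ref{Est-eigv} (after checking $\lambda>(p_{\alt}/\log n)^{1/k_n}$) to bound the weight of each reinserted loop by $R^{3d}\lambda^{t}$, the same combinatorial enumeration of positions and lengths, and the same comparison with $\Pr(\gamma\oplus \K_{\ob^c,m-|\gamma|}(u,u))$ via Lemma~\ref{loop}, with the gain $(1-k_n^{-20d})^{m-|\gamma|}\leq e^{-k_n^{30d}}$ absorbing all prefactors. The only (harmless) imprecision is the claim $2|u-v|+R\leq 2R$, which may fail if $(\log n)^{\iota}>R/2$; replacing the prefactor by $(2d)^{-\rho(2(\log n)^{\iota}+R)}$ (as the paper in effect does with $R^{-3\rho(\log n)^{\iota}}$) changes nothing, since the gain dominates any $e^{O((\log n)^{\iota}+R)}$ factor.
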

\begin{proof}
  For any $\omega$,  we denote \begin{equation}
      \phi(\omega) = \tilde{l}_0 \oplus [\eta_0,\eta_1] \oplus \tilde{l}_1 \oplus[\eta_1,\eta_2]\oplus ...\oplus[\eta_{|\eta|-1},\eta_{|\eta|}] \oplus \tilde{l}_{|\eta|}
  \end{equation}
  where $\tilde{l_i} = l_i$ if
    $|l_i| \leq k_n^{50d}$ and $\tilde l_i = \varnothing$ otherwise. Then for any $\gamma \in \phi( \K_{(\tD_\lambda \cup\ob)^c,m}(0,u))$ with $|\gamma| \neq m$, 
    we deduce from Lemma \ref{Est-eigv} that
\begin{align*}
    &\Pr(\{\omega \in \K_{(\tD_\lambda \cup \ob)^c,m}(0,u) : \phi(\omega) = \gamma,l_{|\eta|} =\varnothing, |\{i: |l_i| > k_n^{50d}\}| = j\})\\
   \leq& \Pr(\gamma )\binom{|\{ i : \tilde l_i = \varnothing\}|}{j} (m - |\gamma|))^jR^{3jd} \lambda^{m-|\gamma|}\,.
      \end{align*}
Summing over $j \leq (m-|\gamma|)/k_n^{50d}$, we get
\begin{align*}
    &\Pr(\{\omega \in \K_{(\tD_\lambda \cup\ob)^c,m}(0,u) : \phi(\omega) = \gamma,l_{|\eta|} =\varnothing\})\\
   \leq& \sum_{j=1}^{\lfloor (m-|\gamma|)/k_n^{50d} \rfloor}\Pr(\gamma )(|\eta|(m - |\gamma|))^j R^{3jd} \lambda^{m-|\gamma|}\\
   \leq & \Pr(\gamma ) n^{3(m-|\gamma|)/k_n^{50d}+2} e^{-(m-|\gamma|)/k_n^{20d}}\lambda_v^{m-|\gamma|}\,.
      \end{align*}
      Note that $\lambda \leq (1-k_n^{-20d})\lambda_v$ and that by Lemma \ref{loop}
$$\Pr(\K_{\ob^c, m - |\gamma|}(u,u)) \geq R^{-3\rho (\log n)^\iota} \lambda_v^{m-|\gamma|}\,.$$
We then get that
\begin{align*}
\Pr&(\{\omega \in \K_{(\tD_\lambda \cup\ob)^c,m}(0,u) : \phi(\omega) = \gamma,l_{|\eta|} =\varnothing\})\\
\leq& \Pr(\gamma\oplus \K_{\ob^c, m - |\gamma|}(u,u))e^{-k_n^{20d}}\,.
\end{align*}
We complete the proof of the lemma by summing over all such $\gamma$'s.
\end{proof}

\begin{cor}
\label{LinearInLoopErasure}
There exists a constant $c = c(d,\pe)$ such that the following holds with $\P$-probability tending to one. If $u \in  \partial_{i} B_{(\log n)^\iota}(v) \cap \calC(v)$ and $\lambda \leq (1-k_n^{-20d})\lambda_v$ for some $v \in \locV$, then for all $m\in \N$, 
  \begin{equation}
    \Pr(\{\omega \in \K_{(\tD_\lambda \cup\ob)^c,m}(0,u) :l_{|\eta|} =\varnothing, m >c |\eta|\}) \leq e^{-k_n^{10d}}\Pr(\K_{\ob^c,m}(0,u))\,.
  \end{equation}
\end{cor}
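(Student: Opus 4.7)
The plan is to decompose each $\omega\in\K_{(\tD_\lambda\cup\ob)^c,m}(0,u)$ with $l_{|\eta|}=\varnothing$ via its loop-erasure \eqref{eq:defLE}, so that $m=|\eta|+\sum_{i=0}^{|\eta|}|l_i|$, and then to control the contributions of \emph{small} loops ($|l_i|<t_2^*$, with $t_2^*$ as in Lemma~\ref{nobigloop-1}), \emph{medium} loops ($t_2^*\leq |l_i|<k_n^{50d}$), and \emph{big} loops ($|l_i|\geq k_n^{50d}$) separately. Small loops contribute at most $t_2^*(|\eta|+1)$ to $\sum_i|l_i|$ deterministically; big loops will be ruled out by Lemma~\ref{nobigloop-2} (using $\lambda\leq(1-k_n^{-20d})\lambda_v$ to include our path set in the one controlled there); and the core task is to prove a linear-in-$|\eta|$ bound in the medium regime by combining the environment input of Lemma~\ref{mcqlwzj} with the walk-level input of Lemma~\ref{nobigloop-1}.

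A preliminary observation is that $|\eta|$ is automatically large enough to feed into Lemma~\ref{mcqlwzj}: since both $0$ and $v\in\locV$ are among the centers of the disjoint balls $B_{nk_n^{-14d}}(\cdot)$ provided by Lemma~\ref{Emap}, we have $|v|>2nk_n^{-14d}$, and hence $|\eta|\geq|u|\geq nk_n^{-14d}-(\log n)^\iota\geq n(\log n)^{-100d^2}$ for large $n$. I therefore restrict attention to the $\P$-typical environment on which Lemmas~\ref{mcqlwzj} and~\ref{nobigloop-1} both hold, and to the walk-level event $\mathcal{E}$ that no loop satisfies $|l_i|\geq k_n^{50d}$ and that $|A_t(\omega)|<|\eta|t^{-10}$ for every $t\in[t_2^*,k_n^{50d})$. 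Using that distinct loops have distinct base-points $\eta_i$ on $\eta$, the count $|\{i:|l_i|=t,\,\eta_i\in\calM(t)\}|$ is at most $|\eta\cap\calM(t)|$; thus on $\mathcal{E}$,
\begin{equation*}
\sum_{i=0}^{|\eta|}|l_i|\ \leq\ t_2^*(|\eta|+1)\ +\ \sum_{t=t_2^*}^{k_n^{50d}-1}t\bigl(|A_t(\omega)|+|\eta\cap\calM(t)|\bigr)\ \leq\ t_2^*(|\eta|+1)+|\eta|\sum_{t\geq t_2^*}\bigl(t^{-9}+t\,e^{-(\log t)^{3/2}}\bigr).
\end{equation*}
Splitting the tail sum at, say, $t=e^{16}$ shows $\sum_{t\geq t_2^*}t\,e^{-(\log t)^{3/2}}$ converges to a constant depending only on $(d,\pe)$, so on $\mathcal{E}$ we obtain $m\leq c|\eta|$ for an appropriate $c=c(d,\pe)$.

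Finally, to estimate the walk-level probability of $\mathcal{E}^c$, Lemma~\ref{nobigloop-2} gives $\Pr(\text{some }|l_i|\geq k_n^{50d})\leq e^{-k_n^{20d}}\Pr(\K_{\ob^c,m}(0,u))$, while a union bound over the $O(k_n^{50d})$ values of $t\in[t_2^*,k_n^{50d})$ combined with Lemma~\ref{nobigloop-1} gives $\Pr(\exists\,t:|A_t(\omega)|\geq|\eta|t^{-10})\leq k_n^{50d}e^{-n^{1/2}}\Pr(\K_{\ob^c,m}(0,u))$. Summing and noting $e^{-k_n^{20d}}+k_n^{50d}e^{-n^{1/2}}\leq e^{-k_n^{10d}}$ for all sufficiently large $n$ then yields the desired bound.

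I expect the main item needing care to be the combinatorial bookkeeping in the medium-loop step---verifying that each medium loop is counted exactly once via its base-point on $\eta$, so that the environment bound from Lemma~\ref{mcqlwzj} can indeed be used to control $\sum_t t\cdot|\eta\cap\calM(t)|$; the analytic convergence of the tail sum and the lower bound $|\eta|\geq n(\log n)^{-100d^2}$ are routine once this is in place.
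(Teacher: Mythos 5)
Your proposal is correct and follows essentially the same route as the paper's proof: use Lemma \ref{Emap} to guarantee $|u|\geq n(\log n)^{-100d^2}$, feed the loop-erasure $\eta$ into Lemma \ref{mcqlwzj}, split the loops at the scales $t^*$ and $k_n^{50d}$ to get $m\leq c|\eta|$ on the good event, and dispose of the bad events via Lemmas \ref{nobigloop-1} and \ref{nobigloop-2}. The only (cosmetic) adjustment is to take the small/medium cutoff to be $t^*=\max(t_1^*,t_2^*)$ rather than $t_2^*$, so that the bound on $|\eta\cap\calM(t)|$ from Lemma \ref{mcqlwzj} is actually available for every $t$ in your medium range; this changes nothing else in the argument.
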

\begin{proof}
By Lemma \ref{Emap}, with $\P$-probability tending to one, we have $|u| \geq n (\log n)^{-100d^2}$. Then by Lemma \ref{mcqlwzj}, there exists $t^*_1 = t^*_1(d,\pe)$ such that
for all self-avoiding path $\swp$ from $0$ to $u$ and  $t^*_1 \leq t \leq k_n^{50d}$, we have
 \begin{equation*}
    |\swp\cap\calM(t)| \leq t^{-10d} |\swp| \,.
  \end{equation*}
Now, we consider any $\omega \in \K_{(\tD_\lambda \cup\ob)^c,m}(0,u)$ such that $l_{|\eta|} =\varnothing$. If 
$$|A_t(\omega)| \leq |\eta| t^{-10} \mbox{ for }t_2^*\leq t \leq k_n^{50d} \mbox{ and }\max_{0\leq i \leq |\eta|}|l_i| \leq k_n^{50d},$$
for some $t_2^* =t_2^*(d,\pe)$, then for $t^* = \max(t_1^*,t_2^*)$
\begin{align*}
  m &= |\eta| + \sum_{0 \leq i \leq |\eta|}|l_i|\\
    & \leq |\eta| + \sum_{t = 1}^{t^*-1} t |\eta|+\sum_{t = t^*}^{k_n^{50d}}t\left( |A_t(\omega)| +  |\eta \cap \calM(t)| \right)\\
    & \leq \left( 1 + \sum_{t = 1}^{t^*-1} t  + \sum_{t = t^*}^{\infty} t^{-9} +\sum_{t = t^*}^{\infty} te^{- (\log t)^{3/2}}\right) |\eta|\,.
\end{align*}
Combining Lemmas \ref{nobigloop-1} and \ref{nobigloop-2}, we complete the proof of the corollary.
\end{proof}

\begin{proof}[Proof of Theorem \ref{thm-main}: path localization]
We will prove that
   \begin{equation*}
\Pr\Big(T(\smt) \leq c\min(|S_T(\smt)|,n(\log n)^{-2/d}), S_{[T(\smt),n]} \subset D_n\mid \tau > n\Big) \to 1\,.\\
   \end{equation*} 
To this end, applying Lemma \ref{LoopErasureEst} with $r_1 = c_0n (\log n)^{-2/d}$ and combining with \eqref{eq:lowerbd}, we get that there exists $ c_0 = c_0(d,\pe)$ such that
\begin{equation}
  \Pr(|\eta(S_{[0,n]})| > c_0n (\log n)^{-2/d} \mid \tau> n) \to 0\,,
\end{equation}
which implies $\Pr(|S_{\ft(\smt)}| \leq c_0n (\log n)^{-2/d} \mid \tau> n ) \to 1$. (One could also use \eqref{eq:000} in stead of \eqref{eq:lowerbd}.) In light of Proposition \ref{endpointloc}, it remains to prove that there exists $ c = c(d,\pe)$ such that
\begin{equation}
   \Pr({\ft(\smt)} \leq c |S_{\ft(\smt)}| \mid \tau >n) \to 1.
\end{equation}
By Lemma \ref{eigvest}, it suffices to show (we write $\lambda'_v = (1-k_n^{-20d})\lambda_{v}$ below)
\begin{equation} 
\label{eq:pathloc}
  \sum_{v}\sum_{u}\sum_{ m}\Pr(\ft(v) = m, S_{m} = u,\smt = v,\tau_{\tD_{\lambda'_{v}}}>m \mid \tau >n) \to 0\,,
\end{equation}
where the summation is over $v \in \locV$, $u  \in  \partial_{i} B_{(\log n)^\iota}(v)  \cap \calC(v)$ and $c|u| \leq m \leq n$. To this end, we first notice that 
\begin{align}
\label{eq:pathloc-markov}
  &\Pr(\ft(v) = m, S_{m} = u,\smt = v,\tau_{\tD_{\lambda'_{v}}}>m, \tau >n)\nonumber \\ 
    \leq &\Pr(S_m = u, u \not \in S_{[0,m-1]},\tau_{\ob \cup \tD_{\lambda'_{v}}} >m) \Pr^u(\tau > n-m)\,.
\end{align}
At the same time, by Corollary \ref{LinearInLoopErasure} and Lemma \ref{LoopErasureEst} (applied with $r_1 =m/c_1'$), there exist positive constants $c'_1,c'_2$ depending only on $(d,\pe)$ such that for any $v \in \locD$ and $u \in  \partial_{i} B_{(\log n)^\iota}(v) \cap \calC(v)$
\begin{align}
  \Pr&(S_m=u,u \not \in S_{[0,m-1]},m > c'_1 |\eta(S_{[0,m]})| ,\tau_{\ob \cup \tD_{\lambda'_{v}}} >m) \nonumber\\
  &\leq e^{-k_n^{10}}\Pr(S_m = u, \tau > m), \label{eq:pathloc-1} \\
\mbox{ and }  \qquad \Pr& (m \leq c'_1 |\eta(S_{[0,m]})|, \tau >m) \leq  {c_2'}^{m}. \label{eq:pathloc-2}
\end{align}
Then by Lemma \ref{loop} and Corollary \ref{lqbzlands}~(3), we have 
$$\Pr(S_m = u, \tau > m)  \geq (2d)^{-2\rho (\log n)^\iota-\rho|u|}e^{-\chi m/(\log n)^{2/d}}\,.$$ Thus, there exists $c = c(d,\pe)$, such that for all $m > c|u|$
$$\Pr(S_m = u, \tau > m) \geq {c_2'}^{m/2}\,.$$  Combined with \eqref{eq:pathloc-1} and \eqref{eq:pathloc-2}, this gives that
\begin{align*}
&\Pr(S_m = u, u \not \in S_{[0,m-1]},\tau_{\ob \cup \tD_{\lambda'_{v}}} >m) \\
\leq& \Pr(S_m=u,u \not \in S_{[0,m-1]},m > c'_1 |\eta(S_{[0,m]})| ,\tau_{\ob \cup \tD_{\lambda'_v}} >m) \\
   &  + \Pr(m \leq c'_1 |\eta(S_{[0,m]})|, \tau>m) \\
\leq & e^{-2^{-1}k_n^{10}}\Pr(S_m = u, \tau > m)\,.
\end{align*}
Combined with \eqref{eq:pathloc-markov}, this implies
\begin{align*}
  \Pr&(\ft(v) = m, S_m = u,\smt = v,\tau_{\tD_{\lambda'_{v}}}>m, \tau >n) \\
    \leq &e^{-2^{-1}k_n^{10}}\Pr(S_m = u, \tau > m) \Pr^u(\tau > n-m)\\
    = &e^{-2^{-1}k_n^{10}} \Pr(S_m = u, \tau > n) \,.
\end{align*}
Summing over $u,v$ and $m \geq c|u|$, we complete the verification of \eqref{eq:pathloc}.
\end{proof}

\end{document}